\tikzset{mydeco/.style={pic actions/.append code=\tikzset{postaction=decorate}}}
\newtheorem{remark}{Remark}
\newtheorem{lemma}{Lemma}
\newtheorem{corollary}{Corollary}
\newtheorem{observation}{Observation}
\newtheorem{theorem}{Theorem}
\newcommand{\HH}{{\mathcal H}}
\renewcommand{\H}{\HH^1}
\DeclareMathOperator{\conv}{conv}
\newcommand{\St}{{\mathcal St}}
\title{Steiner trees with infinitely many terminals on the sides of an angle}
\author[1]{Danila Cherkashin\footnote{\href{mailto:jiocb.orlangyr@gmail.com}{jiocb.orlangyr@gmail.com}}}
\author[2]{Emanuele Paolini}
\author[3]{Yana Teplitskaya}
\affil[1]{Institute of Mathematics and Informatics, Bulgarian Academy of Sciences, Sofia, Bulgaria}
\affil[2]{Universit{\`a} di Pisa, Pisa, Italy}
\affil[3]{Laboratoire  de Math{\'e}matiques d’Orsay, Universit{\'e} Paris-Saclay, CNRS, Orsay, France}
\begin{document}

\maketitle

\begin{abstract}
The Euclidean Steiner problem is the problem of finding a set $\St$,
with the shortest length, such that $\St \cup \mathcal{A}$ is connected, where $\mathcal{A}$ is a given set in a Euclidean space. The solutions $\St$ to the Steiner problem will be called \textit{Steiner sets} while
the set $\mathcal{A}$ will be called \textit{input}. Since every Steiner set is acyclic we call it Steiner tree in the case when it is connected.
We say that a Steiner tree is \emph{indecomposable} if it does not contain any Steiner tree for a subset of the input.

We are interested in finding the Steiner set when the input consists of infinitely many points distributed on two lines. In particular we would like to find a configuration which gives an indecomposable Steiner tree.

It is natural to consider a self-similar input, namely the set $\mathcal{A}_{\alpha,\lambda}$ of points with coordinates $(\lambda^{k-1}\cos \alpha,$ $\pm \lambda^{k-1}\sin \alpha)$, where $\lambda>0$ and $\alpha>0$ are small fixed values and $k \in \mathbb{N}$.
These points are distributed on the two sides of an 
angle of size $2\alpha$ in such a way that the distances from the points to the vertex of the angle are in a geometric progression.

To our surprise, we show that in this case the solutions to the Steiner problem for $\mathcal{A}_{\alpha,\lambda}$,
when $\alpha$ and $\lambda$ are small enough, 
are always decomposable trees. More precisely, any Steiner tree for $\mathcal{A}_{\alpha,\lambda}$ is a countable union of Steiner trees, each one connecting 5 points from the input.
Each component of the decomposition can be mirrored with respect to the angle bisector 
providing $2^{\mathbb N}$ different solutions with the same length.
By considering only a finite number of components we obtain 
many solutions to the Steiner problem for finite sets composed of $4k+1$ points distributed on the two lines 
($2k+1$ on a line and $2k$ on the other line).
These solutions are very similar to the \emph{ladders} of Chung and Graham.

We are able to obtain an indecomposable Steiner tree by adding,
to the previous input, 
a single point strategically placed inside the angle.
In this case the solution is in fact a self-similar tree (in the sense that 
it contains a homothetic copy of itself). 

Finally, we show how the position of the Steiner points in the Steiner 
tree can be described by a discrete dynamical system which turns out 
to be equivalent to a 2-interval piecewise linear contraction.
\end{abstract}

\section{Introduction}

The finite Euclidean Steiner problem is the problem of finding a one-dimensional connected set $\St$ of minimal length containing a finite set of given points $\mathcal A \subset \mathbb{R}^d$.  

The history of the finite Euclidean Steiner problem is studied in the paper~\cite{brazil2014history}.
Brazil, Graham, Thomas and Zachariasen did a detailed research and discovered that the statement and basic results about the Steiner Problem were rediscovered (at least) three times: it was first stated by Gergonne in 1811, by Gauss in 1836 and by Jarn{\'\i}k and K{\"o}ssler~\cite{jarnik1934minimal}
in 1934.
The problem has become well-known as the ``Steiner problem'' after the great success of the book ``What is Mathematics?'' by Courant and Robbins~\cite{Courant1941}.

In 1980-s and 1990-s, explicit solutions to the finite Steiner problem attracted the attention of several notable mathematicians. It is worth noting that Du, Hwang and Weng~\cite{du1987steiner} completely solved the Steiner problem when $\mathcal A$ is the set of vertices of a regular polygon. Rubinstein and Thomas~\cite{rubinstein1992graham} generalizes the result when the points of $\mathcal A$ are uniformly enough distributed along a circle.

A setting which is similar to ours, can be found in the paper by Burkard and Dud{\'a}s~\cite{burkard1996steiner}, who determined the Steiner trees for the vertex of a plane angle of size at least $\pi/6$ and all points in the sides at a distance $1,\dots, n$. It turns out that apart from the points which are close to the vertex, the Steiner tree is composed of segments of length $1$ joining the points on the sides of the angle.
The results of our paper are also very similar to the results about \emph{ladders} of Chung and Graham~\cite{MR0480116} and Burkard, Dud{\'a}s and Maier~\cite{burkard1996cut}. 
A \textit{ladder} is a collection of $2n$ lattice points 
placed on the vertices of $n-1$ adjacent unit squares, forming 
a rectangle of sides $1\times(n-1)$.
It turns out that when $n$ is odd the Steiner tree is full, which means that terminal points are reached by a single edge of the tree.
While if $n > 2$ is even the Steiner tree is not full. 
By splitting the tree in the terminals of order two 
one can \emph{decompose} the Steiner tree into 
the union of small Steiner trees, each one containing 
only two or four terminal points.
Notice that this is a case (relevant to our quest) where a full Steiner tree with an arbitrarily large number of points is presented. 
However, as the number of points is going to infinity also 
the length of the Steiner tree is going to infinity, hence this example is not suitable to construct a tree with infinitely many leaves.

Finding the Steiner tree for a finite set of points is a well-known problem in computational geometry.
Garey, Graham and Johnson~\cite{garey1977complexity} proved that the Steiner problem is NP-hard. Rubinstein, Thomas and Wormald~\cite{rubinstein1997steiner} proved that such 
a complexity persists even when the terminals 
are constrained to lay along two parallel lines and also in the case when the terminals are placed on the sides of an angle with size which is smaller than $2\pi/3$ (see Theorem 3 in~\cite{rubinstein1997steiner}).

More recently, a generalized setting for the Steiner problem was given by Paolini and Stepanov~\cite{paolini2013existence}: the ambient space $X$ can be any connected complete metric space with the Heine--Borel property (closed bounded sets are compact) and the given set of points can be any compact subset of the ambient space.
In this setting there always exists a set $\St$, with minimal $1$-dimensional Hausdorff measure $\H$, such that $\St\cup \mathcal A$ is connected.
Such $\St$ is proven to be acyclic. If $\St$ itself is connected we call it \textit{Steiner tree for $\mathcal{A}$}. 
The set $\mathcal{A}$ is called an \textit{input} for the Steiner problem.
See Section~\ref{sec:steiner} for precise definition and properties of Steiner trees.

Following the paper \cite{paolini2013existence}, the question 
of finding non-trivial examples of Steiner trees for infinitely many 
points was raised.
Of course, it is easy to find solutions with infinitely many points by splitting a finite tree into infinitely many pieces. 
For example given any compact set of points on the real line, $A\subset\mathbb R$, 
the solution to the Steiner problem with input $A$ is clearly given by 
the set $\St = [\min A,\max A]\setminus A$.
If $A$ is compact the solution $\St$ is composed of a countable number of disjoint open segments each with end points on $\partial A$.
Much more difficult is to find an infinite Steiner tree which is \emph{indecomposable} in the sense 
that it is not possible to split the tree $\St$ into two connected pieces 
$\St_1$ and $\St_2$ so that both $\St_1$ and $\St_2$
are Steiner trees over a subset of the set $\mathcal A$ of terminal points.

\begin{figure}[ht]
    \centering
    \begin{tikzpicture}

\definecolor{green}{rgb}{0.0,0.6,0.0}

 \begin{scope}[scale=1.6]
    \def\r{1.5cm}
    \draw[ultra thick, blue]
        (-\r, \r) coordinate(x1)  --++ (-60:{\r/cos(30)}) coordinate (x5);
    \draw[ultra thick, blue]
        (\r,\r) coordinate(x2)  --++ (-120:{\r/cos(30)}) coordinate (x6);
    \draw[ultra thick, blue]
        (\r, -\r) coordinate(x3)  --++ (120:{\r/cos(30)});
    \draw[ultra thick, blue]
        (-\r,-\r) coordinate(x4)  --++ (60:{\r/cos(30)});
    \draw[ultra thick, blue]
        (x5) -- (x6);

\draw[ultra thick, green]
        (-\r, \r) coordinate(x1) --++ (-30:{\r/cos(30)}) coordinate (x7);
    \draw[ultra thick, green]
        (\r,\r) coordinate(x2)  --++ (-150:{\r/cos(30)});
    \draw[ultra thick, green]
        (\r, -\r) coordinate(x3)  --++ (150:{\r/cos(30)}) coordinate (x8);
    \draw[ultra thick, green]
        (-\r,-\r) coordinate(x4)  --++ (30:{\r/cos(30)});
    \draw[ultra thick, green]
        (x7) -- (x8);
    \foreach \x in{1,2,...,4}{
        \draw [fill=red] (x\x) circle (1.5pt);
    }
\end{scope}

   \begin{scope}[shift={(15,0)}]
  
  \def\lambda{0.3}
  \def\ty{0.4} 
  \def\r{7cm}
  \def\rr{\lambda*\r}
  \def\rrr{\ty*\ty*\r}
  \def\rrrr{\ty*\ty*\ty*\r}
  \def\rrrrr{\ty*\ty*\ty*\ty*\r}

    \path (-1.5*\r,0) coordinate  (y0);
    \path (-\r,0) coordinate (y1);

	\path (y1) ++(60:0.75*\ty*\r) coordinate  (y2);
	\path (y1) ++(-60:0.75*\ty*\r) coordinate  (y3);
	\path (y2) ++(120:\ty*\rr) coordinate  (y4);
	\path (y2) ++(0:\ty*\rr) coordinate (y5);
	\path (y3) ++(0:\ty*\rr) coordinate (y6);
	\path (y3) ++(-120:\ty*\rr) coordinate (y7);
	\path (y4) ++(180:\ty*\rrr) coordinate (y8);
	\path (y4) ++(60:\ty*\rrr) coordinate (y9);
	\path (y5) ++(60:\ty*\rrr) coordinate (y10);
	\path (y5) ++(-60:\ty*\rrr) coordinate (y11);
	\path (y6) ++(60:\ty*\rrr) coordinate (y12);
	\path (y6) ++(-60:\ty*\rrr) coordinate (y13);
	\path (y7) ++(-60:\ty*\rrr) coordinate (y14);
	\path (y7) ++(180:\ty*\rrr) coordinate (y15);

    \path (y8) ++(-120:\ty*\rrrr) coordinate (y16);
	\path (y8) ++(120:\ty*\rrrr) coordinate (y17);
	\path (y9) ++(120:\ty*\rrrr) coordinate (y18);
	\path (y9) ++(0:\ty*\rrrr) coordinate (y19);
	\path (y10) ++(120:\ty*\rrrr) coordinate (y20);
	\path (y10) ++(0:\ty*\rrrr) coordinate (y21);
	\path (y11) ++(0:\ty*\rrrr) coordinate (y22);
	\path (y11) ++(-120:\ty*\rrrr) coordinate (y23);
        \path (y12) ++(120:\ty*\rrrr) coordinate (y24);
	\path (y12) ++(0:\ty*\rrrr) coordinate (y25);
	\path (y13) ++(0:\ty*\rrrr) coordinate (y26);
	\path (y13) ++(-120:\ty*\rrrr) coordinate (y27);
	\path (y14) ++(0:\ty*\rrrr) coordinate (y28);
	\path (y14) ++(-120:\ty*\rrrr) coordinate (y29);
	\path (y15) ++(-120:\ty*\rrrr) coordinate (y30);
	\path (y15) ++(120:\ty*\rrrr) coordinate (y31);

  \def\fork{\path[draw, color=blue, ultra thick]}

    \fork (y2) -- (y1) -- (y3);
    \fork (y4) -- (y2) -- (y5);
    \fork (y6) -- (y3) -- (y7);

   \path[draw, color=blue, ultra thick] (y0) -- (y1);

  \foreach \n [evaluate=\n as \m using {int(2*\n)}] in {4,5,6,7} { 
    \path[draw, color=blue, ultra thick, dotted] (y\n) -- (y\m);
  }

 \foreach \n [evaluate=\n as \m using {int(2*\n+1)}] in {4,5,6,7} { 
    \path[draw, color=blue, ultra thick, dotted] (y\n) -- (y\m);
  }

  \draw [fill=red] (y0) circle (2pt);

  \foreach \n in {1,2,...,15} {
    \draw [fill=blue] (y\n) circle (2pt);
  }
  
  \foreach \n in {16,17,...,31} {
    \draw [fill=red] (y\n) circle (2pt);
  }
 \end{scope}

\end{tikzpicture}
    \caption{The left part contains two Steiner trees connecting the vertices of a square; the right part illustrates an example of a self-similar solution $\Sigma(\lambda)$.}
    \label{pict:trees}
\end{figure}
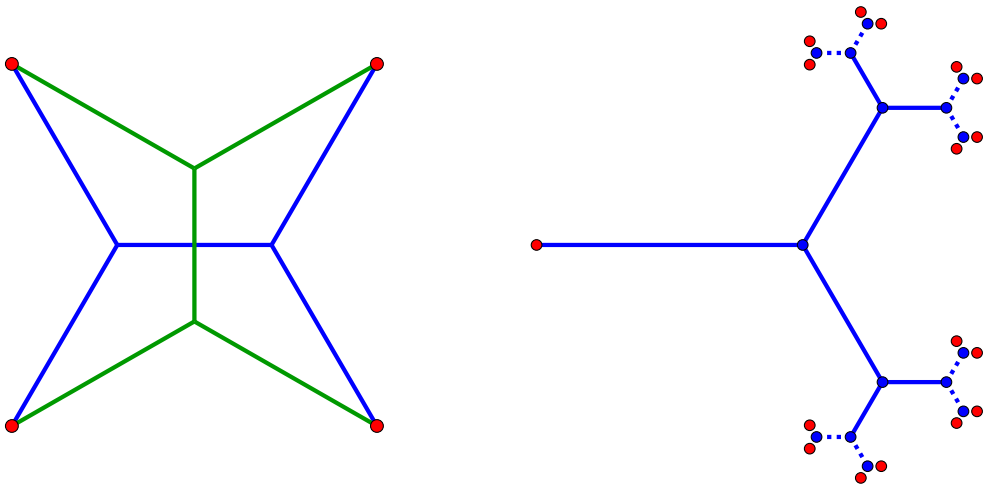

The first example of an infinite, indecomposable Steiner tree was given in~\cite{paolini2015example}.
However it was not self-similar and its input set $\mathcal{A}$ had zero Hausdorff dimension.
The next natural question was to understand if the same result was true 
for the set of leaves of a self-similar binary tree $\Sigma(\lambda)$
composed of a segment of length $1$ which splits into two segments 
of length $\lambda$ each of which splits into two segments 
of length $\lambda^2$ and so on\dots
The splitting points are \emph{regular tripods} meaning that there are three segments joining in the point forming equal angles of size $2\pi/3$
(see Figure~\ref{pict:trees}, right-hand side picture). 
The input set $\mathcal{A}(\lambda)$ is composed of a root point (the first end-point of the unit segment) and all the accumulation points of the smaller and smaller segments.
Note that, if $\lambda < \frac 1 2$ the set $\mathcal{A}(\lambda)$ turns out to be a self-similar, fractal set, with Hausdorff dimension $-\frac{\ln 2}{\ln \lambda}>0$.
Recently, in~\cite{cherkashin2023self,paolini2023steiner}, $\Sigma(\lambda)$ was proven to be the unique Steiner tree for the set $\mathcal{A}(\lambda)$ if $\lambda$ is small enough.

Clearly every connected subset $S$ of $\Sigma(\lambda)$ is the unique Steiner tree for the set of its endpoints and corner points (otherwise one can replace $S$ with a better competitor in $\Sigma(\lambda)$).

So $\Sigma(\lambda)$ can be regarded as a \textit{universal graph} for Steiner trees in the sense that it contains a Steiner subtree with any given possible finite combinatorics. 
This fact was recently used in~\cite{basok2018uniqueness} to show that the set of $n$-point configurations in $\mathbb{R}^d$ with a unique Steiner tree is path-connected (as a subset of $\mathbb{R}^{dn}$).

In this paper we are interested in finding an explicit example of a Steiner tree connecting a countable number of points distributed along two lines forming an angle in the plane. 
The points will be equally distributed on the two sides of the angle in such a way that the distances of the points from the vertex of the angle are in a geometric progression.

A configuration like this was suggested in the Master's Thesis of Letizia Ulivi, where an existence theorem for Steiner trees was proven in the case when the input set was considered to be compact and countable, see \cite{PaoUli09}. 

Examples of this kind are interesting because the set of terminals is distributed on the boundary of a convex set. 
In this setting the Steiner tree is splitting the convex set into many \emph{regions} and hence determines a \emph{partition}.
The Steiner Problem becomes a problem of \emph{minimal partitions} and 
can hence be studied in the framework of geometric measure theory 
by using calibrations (see \cite{Morgan2005,MarcheseMassaccesi2016,CarioniPluda2021}).

To our surprise we have found that, under suitable assumptions (Theorem~\ref{theo:main2}), the Steiner tree for such a set of points is decomposable into rescaled copies of a Steiner tree connecting only 5 points. 
However by adding a pair of terminal points in a precise position, outside the geometric progression, we are able 
to force the Steiner tree to be indecomposable, see Remark~\ref{rem:main}. 

The main difficulty in the proof is the fact that the set of locally minimal trees may have a very complicated structure (see Remark~\ref{rem:localminimas}) and our methods heavily use the global optimality of a solution.

\begin{figure}[h]
    \centering
    \begin{tikzpicture}[scale=10,rotate=-5]

\coordinate [label={[red,above]:$A_\infty$}] (ab) at (0,0);

\draw [ultra thick] (ab) --++ (10:0.031) coordinate (a6) --++ (10:0.031) coordinate (a5) --++ (10:0.063) coordinate (a4) --++ (10:0.125) coordinate [label={[red,above left]:$A_{3}$}] (a3) --++ (10:0.25) coordinate [label={[red,above left]:$A_{2}$}] (a2)
--++ (10:0.5) coordinate [label={[red,above left]:$A_{1}$}] (a1) --++ (10:0.5) coordinate [label={[red,above left]:$A_{0}$}] (a0);

\draw [ultra thick] (ab) --++ (0:0.031) coordinate (b6) --++ (0:0.031) coordinate (b5) --++ (0:0.063) coordinate (b4) --++ (0:0.125) coordinate [label={[red,below left]:$B_{3}$}] (b3) --++ (0:0.25) coordinate [label={[red,below left]:$B_{2}$}] (b2)
--++ (0:0.5) coordinate [label={[red,below left]:$B_{1}$}] (b1) --++ (0:0.5) coordinate [label={[red,below left]:$B_{0}$}] (b0);

\draw[ultra thick, red] (a0) -- (b0);

\foreach \n in {0,1,...,6} {
    \draw [fill=red] (a\n) circle (0.2pt);
    \draw [fill=red] (b\n) circle (0.2pt);
  }
\draw [fill=red] (ab) circle (0.2pt);

\end{tikzpicture}
    \caption{The input sets $\mathcal A_1=\{A_\infty, A_1,B_1,A_2,B_2,\dots\}$, $\mathcal A_0=\mathcal A_1\cup [A_0 B_0]$. 
    Here $\alpha = \frac \pi {36}$ and $\lambda = \frac 1 2$.}
    \label{pict:setup}
\end{figure}
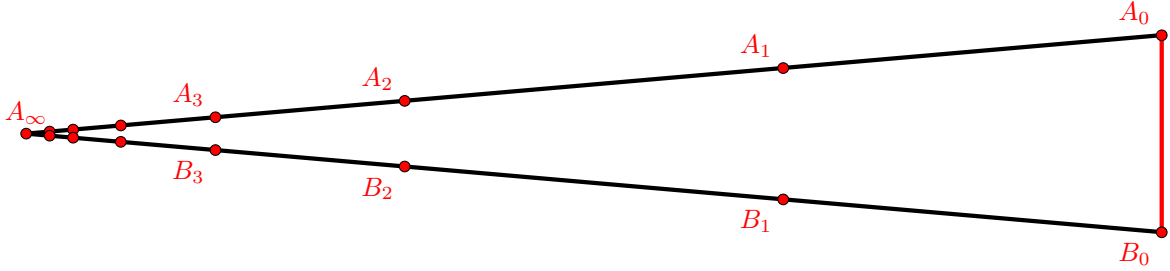

\paragraph{Our setup.}      
Consider a positive $\lambda \leq \frac 1 2$ and an angle of size $2\alpha < \frac \pi 3$ in the plane. 
Let $A_\infty$ be the vertex of the angle, and $A_1, B_1$ be points on distinct sides of the angle at a unit distance from $A_\infty$ (so that the middle point of the segment $[A_1B_1]$ has distance $\cos \alpha$ from $A_\infty$).
For $k \geq 1$, define $A_{k+1} \in [A_{k}A_\infty]$ and $B_{k+1} \in [B_{k}A_\infty]$ recursively by $|A_{k+1}A_\infty| = \lambda \cdot |A_{k}A_\infty|$ and $|B_{k+1}A_\infty| = \lambda \cdot |B_{k}A_\infty|$ (see Figure~\ref{pict:setup}). Define $A_0$, $B_0$ on the rays $[A_\infty A_1)$ and $[A_\infty B_1)$, respectively, in such a way that 
\[
|A_0A_\infty| = |B_0A_\infty| = \frac{1}{\lambda} - \frac{\tan(\alpha)}{\sqrt{3}\lambda}.
\]
Let 
\[
\mathcal{A}_1 = \mathcal{A}_1(\alpha,\lambda) := \{A_\infty \} \cup \bigcup_{n=1}^{\infty} \{A_n, B_n\} 
\]
and
\[
\mathcal{A}_0 = \mathcal{A}_0(\alpha,\lambda) := \mathcal{A}_1(\alpha,\lambda) \cup [A_0B_0].
\]

The following theorem is the main result of the paper. 

\begin{theorem} \label{theo:main2}
Suppose $0< \alpha < \frac{\pi}{6}$ and $0 < \lambda \leq \frac 1 2$ satisfy that 
\begin{equation}\label{eq:438466}
  \sqrt \lambda  < \frac{\cos \left(\frac{\pi}{3} + \alpha \right)}{\cos \left(\frac{\pi}{3} - \alpha \right)}.
\end{equation}
Then every solution $\St_1$ to the Steiner problem for $\mathcal{A}_1$ is the union of full trees on 5 terminals (see Fig.~\ref{pict:mainth2}). The length of a solution has the following explicit formula
\[
 \H(\St_1) =  \left | \cos \alpha + \sqrt{3} \sin \alpha + \frac{2\lambda}{1-\lambda^2} e^{\frac{\pi i}{6}} \sin \alpha + 
 \frac{2 \lambda^2}{1-\lambda^2} e^{-\frac{\pi i}{6}}\sin \alpha \right |.
\]
\end{theorem}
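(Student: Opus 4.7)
My plan is to carry out three broad steps.

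Existence and local regularity come from Paolini--Stepanov~\cite{paolini2013existence}: since $\mathcal A_1$ is compact (its only accumulation point is $A_\infty\in\mathcal A_1$), a solution $\St_1$ exists; it is acyclic, locally a finite union of segments, with interior branching points being Steiner points of degree three whose incident segments meet at $120^\circ$, and terminal points of degree at most three with incident angles at least $120^\circ$. With this regularity in hand, the main step is a combinatorial classification of $\St_1$. I would decompose $\St_1$ at every terminal $A_n$ or $B_n$ of degree at least two; by the standard subtree principle (if one piece were not a Steiner tree on its leaves, one could shorten $\St_1$ by replacing that piece with a better competitor), each component is itself a Steiner tree on its leaves, and the goal is to prove that every such component is a full Steiner tree on exactly five terminals with the combinatorics shown in Fig.~\ref{pict:mainth2}.

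This classification I would organise in three layers. First, I would use the smallness of $\alpha$ and $\lambda$, together with the $120^\circ$ rule and the fact that $2\alpha<\pi/3$, to restrict the admissible combinatorial types of a single block: any subtree reaching terminals at levels $\ge n$ is confined to a thin wedge around the bisector, and an explicit case analysis leaves only a short list of possible full topologies. Second, I would use the self-similarity $\lambda\cdot\mathcal A_1\subset\mathcal A_1$ (up to the outermost terminals) to show that the pattern of block types along $\St_1$ is eventually periodic: a tail of $\St_1$ is itself a Steiner tree for a rescaled copy of the input, so the same classification has to repeat. Third, and this is the crux, I would compare the remaining periodic candidates by a global length computation and show that~\eqref{eq:438466} is exactly the threshold at which the $5$-terminal block strictly beats every alternative pattern.

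The main difficulty is this last comparison: as the authors emphasise in Remark~\ref{rem:localminimas}, the space of locally minimal trees is very rich, so purely local rewirings cannot by themselves select the global minimizer; swapping one block for a cheaper one shifts the hinge terminal with the adjacent blocks and forces a cascade of adjustments down the whole tail, so the argument must use global optimality, and the sharp bound $\sqrt\lambda<\cos(\pi/3+\alpha)/\cos(\pi/3-\alpha)$ is expected to appear precisely as the ratio at which the $5$-block together with its self-similar tail is strictly shorter than the best alternative, even after all cascading adjustments. Once the decomposition is in hand, the length formula follows from an iterated Melzak construction: identifying the plane with $\mathbb C$ via $A_\infty=0$, $A_n=\lambda^{n-1}e^{i\alpha}$, $B_n=\lambda^{n-1}e^{-i\alpha}$, the length of each full block is computed by successive $60^\circ$ rotations of its terminals (Melzak's rule), producing complex-valued virtual segments that, thanks to the self-similarity and the alternating mirror structure of consecutive blocks, align into a single chain anchored at $A_\infty$; summing the resulting geometric series in $\lambda^2$ and taking the modulus gives exactly the four-term expression of the statement, with the $e^{\pm i\pi/6}$ factors recording the alternation of orientation between successive mirrored blocks.
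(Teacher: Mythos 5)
Your framework (existence and regularity from~\cite{paolini2013existence}, decomposition at degree-two terminals into full components, use of the self-similarity $f_1(\mathcal A_1)\subset\mathcal A_1$) matches the paper's, but the crux of your plan --- ``compare the remaining periodic candidates by a global length computation'' --- is left as a black box, and your own remark about cascading adjustments shows you have no mechanism to carry it out. The paper's mechanism is a Maxwell-type length formula (Lemma~\ref{lm:Maxwell}): for a full* immersed tree the length equals $\sum_k \bar c_k p_k$, a sum over the terminals alone, where $c_k$ is the unit outgoing direction at $p_k$. Combined with Lemma~\ref{lm:parallelogrammes} (a full* competitor is determined by its wind-rose, and for each $k\ge 2$ the directions at $A_k$ and $B_k$ are opposite), this shows that the length of \emph{every} full* competitor is $\bigl|\cos\alpha+\sqrt3\sin\alpha+ae^{i\pi/6}+be^{-i\pi/6}\bigr|$ with $a=2\sin\alpha\sum_{j\in J}\lambda^j$, $b=2\sin\alpha\sum_{j\notin J}\lambda^j$ for some $J\subseteq\mathbb N$. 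Since $a+b$ is independent of $J$, global minimization reduces to minimizing $|a-b|$ over $J$, subject to the combinatorial constraints of Observation~\ref{obs:obs} ($1\in J$, and no two consecutive indices both outside $J$), which force $J$ to be the odd integers. No cascade of Steiner-point adjustments ever needs to be tracked, because the formula does not see the Steiner points. Without this (or an equivalent) device, your step three does not go through.

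Two further points are off. First, your ``eventual periodicity'' claim --- that a tail of $\St_1$ is a Steiner tree for a rescaled input and therefore the block pattern must repeat --- is a non sequitur: the rescaled problem has $2^{\mathbb N}$ solutions (each $5$-terminal block can be mirrored independently of the others), so the mirroring pattern is genuinely non-periodic in general; only the partition into $5$-terminal blocks is forced, and that comes out of the $|a-b|$ minimization together with Lemma~\ref{lm:5points}, not out of a periodicity argument. Second, you guess that~\eqref{eq:438466} appears as the threshold in the block-versus-block length comparison; in the paper it instead enters through Lemma~\ref{lm:321066}, guaranteeing wedges of opening $\ge 2\pi/3$ between consecutive levels and hence that exactly one edge of $\St_1$ crosses from level $k$ to level $k+1$ (the structural input to the whole classification), while the length comparison itself only uses $\lambda\le\tfrac12$.
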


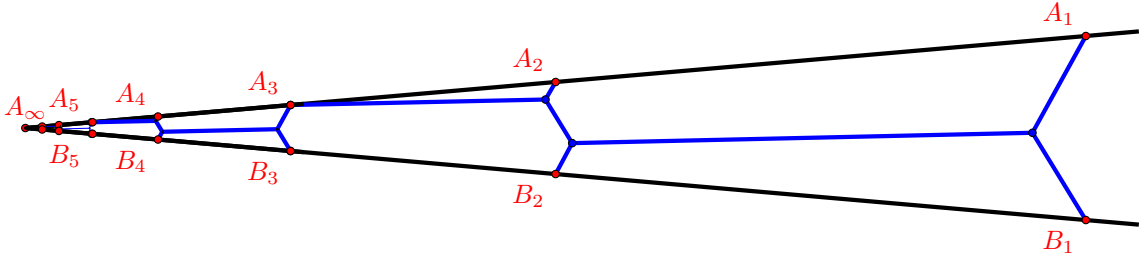
\begin{figure}[h]
    \centering
    \begin{tikzpicture}[scale = 0.7,rotate=-5]

\coordinate [label={[red,above]:$A_\infty$}] (ab) at (0,0);

\draw [ultra thick] (ab) --++ (10:2.5) coordinate  (a3) --++ (10:2.5) coordinate [label={[red,above left]:$A_{3}$}] (a2)
--++ (10:5) coordinate [label={[red,above left]:$A_{2}$}] (a1) --++ (10:10) coordinate  [label={[red,above left]:$A_{1}$}] (a0) --++ (10:1);

\draw [ultra thick] (ab) --++ (0:2.5) coordinate  (b3) --++ (0:2.5) coordinate [label={[red,below left]:$B_{3}$}] (b2)
--++ (0:5) coordinate [label={[red,below left]:$B_{2}$}] (b1) --++ (0:10) coordinate  [label={[red,below left]:$B_{1}$}] (b0) --++ (0:1);

\draw[ultra thick, blue] (a0) --++ (-113.7:2.09) coordinate (y0) -- (b0);
\draw[ultra thick, blue] (y0) --++ (-173.7:8.65) coordinate (y1) -- (b1);
\draw[ultra thick, blue] (y1) --++ (126.3:0.967) coordinate (y2) -- (a1);
\draw[ultra thick, blue] (y2) -- (a2);

\foreach \n in {0,1,2} {
    \draw [fill=red] (a\n) circle (2pt);
    \draw [fill=red] (b\n) circle (2pt);
  }
\draw [fill=red] (ab) circle (2pt);  
\draw [fill=blue] (y0) circle (2pt);  
\draw [fill=blue] (y2) circle (2pt);
\draw [fill=blue] (y1) circle (2pt);

\begin{scope}[scale=0.25]
    \draw [ultra thick] (ab) --++ (10:2.5) coordinate  (a3) --++ (10:2.5) coordinate [label={[red,above left]:$A_{5}$}] (a2)
--++ (10:5) coordinate [label={[red,above left]:$A_{4}$}] (a1) --++ (10:10) coordinate  (a0) --++ (10:1);

\draw [ultra thick] (ab) --++ (0:2.5) coordinate  (b3) --++ (0:2.5) coordinate [label={[red,below left]:$B_{5}$}] (b2)
--++ (0:5) coordinate [label={[red,below left]:$B_{4}$}] (b1) --++ (0:10) coordinate  (b0) --++ (0:1);

\draw[ultra thick, blue] (a0) --++ (-113.7:2.09) coordinate (y0) -- (b0);
\draw[ultra thick, blue] (y0) --++ (-173.7:8.65) coordinate (y1) -- (b1);
\draw[ultra thick, blue] (y1) --++ (126.3:0.967) coordinate (y2) -- (a1);
\draw[ultra thick, blue] (y2) -- (a2);

\foreach \n in {0,1,2} {
    \draw [fill=red] (a\n) circle (8pt);
    \draw [fill=red] (b\n) circle (8pt);
  }
\draw [fill=red] (ab) circle (2pt);  
\draw [fill=blue] (y0) circle (2pt);  
\draw [fill=blue] (y2) circle (2pt);
\draw [fill=blue] (y1) circle (2pt);
    
\end{scope}

\begin{scope}[scale=0.0625]
    \draw [] (ab) --++ (10:2.5) coordinate  (a3) --++ (10:2.5) coordinate (a2)
--++ (10:5) coordinate (a1) --++ (10:10) coordinate  (a0) --++ (10:1);

\draw [] (ab) --++ (0:2.5) coordinate  (b3) --++ (0:2.5) coordinate  (b2)
--++ (0:5) coordinate (b1) --++ (0:10) coordinate  (b0) --++ (0:1);

\draw[, blue] (a0) --++ (-113.7:2.09) coordinate (y0) -- (b0);
\draw[, blue] (y0) --++ (-173.7:8.65) coordinate (y1) -- (b1);
\draw[, blue] (y1) --++ (126.3:0.967) coordinate (y2) -- (a1);
\draw[, blue] (y2) -- (a2);

\foreach \n in {0,1,2} {
    \draw [fill=red] (a\n) circle (32pt);
    \draw [fill=red] (b\n) circle (32pt);
  }
\draw [fill=red] (ab) circle (2pt);  
\draw [fill=blue] (y0) circle (2pt);  
\draw [fill=blue] (y2) circle (2pt);
\draw [fill=blue] (y1) circle (2pt);
    
\end{scope}

\end{tikzpicture}
    \caption{One of the solutions to the Steiner Problem 
    for $\mathcal A_1$. See Theorem~\ref{theo:main2}. Here $\alpha = \frac \pi {36}$ and $\lambda = \frac 1 2$.}
    \label{pict:mainth2}
\end{figure}

Note that every Steiner tree for $\mathcal{A}_1$ is not full and so can be decomposed into solutions for 5-terminals subset of $\mathcal{A}_1$. 
Each of the solutions on 5 points can be mirrored with respect to the angle bisector, so we have $2^{\mathbb N}$ different solutions, all with the same length.
Since every subtree of a Steiner tree is itself a Steiner tree with respect to its terminal points, we can state the following corollary, which holds in 
the framework of finite Steiner trees.

\begin{corollary} \label{cor:finite}
Suppose $\alpha$ and $\lambda$ are given with the assumptions of Theorem~\ref{theo:main2}.
Then the Steiner problem for the finite set $\{A_1,\dots,A_{2k+1},B_1,\dots,B_{2k}\}$ has $2^k-1$ different solutions each of which is the union of rescaled and mirrored copies of the Steiner tree on 5 terminals: $\{A_1, A_2, A_3, B_1, B_2\}$.
The length also has an explicit formula.
\end{corollary}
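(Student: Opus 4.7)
The corollary follows from Theorem~\ref{theo:main2} combined with the elementary observation that every connected subtree of a Steiner tree is itself a Steiner tree for its own terminal set: otherwise one could substitute a shorter competitor, contradicting the minimality of the enclosing tree.

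For the existence of solutions of the claimed form, fix any of the $2^{\mathbb N}$ Steiner trees $\St_1$ for $\mathcal A_1$ given by Theorem~\ref{theo:main2}, with its decomposition into 5-trees $T_1\cup T_2\cup\cdots$ where consecutive trees share a single articulation terminal. The partial union $T_1\cup\cdots\cup T_k$ is connected; upon choosing the orientation of $T_k$ so that its innermost terminal is $A_{2k+1}$ (rather than $B_{2k+1}$, which is outside $\mathcal A^{(k)}:=\{A_1,\dots,A_{2k+1},B_1,\dots,B_{2k}\}$), its set of terminals is exactly $\mathcal A^{(k)}$. The subtree observation then yields a Steiner tree for $\mathcal A^{(k)}$ of the claimed form.

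Conversely, let $\St$ be an arbitrary Steiner tree for $\mathcal A^{(k)}$. Attach to $\St$, at its innermost terminal $A_{2k+1}$, the inner tail $\St'':=T_{k+1}\cup T_{k+2}\cup\cdots$ extracted from a Steiner tree for $\mathcal A_1$ given by Theorem~\ref{theo:main2}. Both $\St$ and the outer truncation $T_1\cup\cdots\cup T_k$ have the same length (as Steiner trees for $\mathcal A^{(k)}$); granted that $\St\cap\St''=\{A_{2k+1}\}$, we obtain $\H(\St\cup\St'')=\H(\St)+\H(\St'')=\H(\St_1)$, so $\St\cup\St''$ is itself a Steiner tree for $\mathcal A_1$. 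A second application of Theorem~\ref{theo:main2} then forces $\St$ to be a finite union of $k$ rescaled and mirrored copies of the basic 5-tree on $\{A_1,A_2,A_3,B_1,B_2\}$.

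For the counting, each of the $k$ 5-trees admits two orientations (original or mirror across the angle bisector); a combinatorial enumeration of the admissible configurations, subject to the boundary constraint at level $2k+1$, yields the claimed count of $2^k-1$ distinct Steiner trees. The explicit length formula is obtained by truncating after $k$ terms the geometric series implicit in the formula of Theorem~\ref{theo:main2}. The main obstacle is the converse step, in particular verifying the ``no extra intersection'' property $\St\cap\St''=\{A_{2k+1}\}$, which is what ultimately transfers the global optimality of Theorem~\ref{theo:main2} from the infinite to the finite setting.
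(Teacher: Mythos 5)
Your existence direction coincides with what is, in effect, the paper's entire argument: the corollary is derived there in one sentence from the observation that every subtree of a Steiner tree is a Steiner tree for its own terminal set, applied to the truncation $T_1\cup\dots\cup T_k$ of a solution given by Theorem~\ref{theo:main2}. Your converse step (gluing a fixed inner tail $\St''$ onto an arbitrary solution $\St$ of the finite problem and invoking Theorem~\ref{theo:main2} for the resulting competitor) is a genuine addition that the paper leaves implicit, and it is the right mechanism. Note, however, that the step you single out as the main obstacle is not one: you do not need $\St\cap\St''=\{A_{2k+1}\}$ in advance, since subadditivity gives $\H(\St\cup\St'')\le\H(\St)+\H(\St'')=\H(\St_1)$, which already forces $\St\cup\St''$ to be a minimizer for $\mathcal A_1$; that $\St\cap\St''$ reduces to the single point $A_{2k+1}$ then follows a posteriori, the intersection being a connected, $\H$-null subset of a topological tree. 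What does deserve a word instead is that $\St$ is itself a tree (Theorem~\ref{th:PaoSte}, item 5) and that the decomposition of a solution for $\mathcal A_1$ into full $5$-terminal components is unique, so that deleting the known tail from $\St\cup\St''$ recovers $\St$ as a union of exactly $k$ such components.

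The genuine gap is the count, which you assert rather than derive. Carrying out the enumeration: matching the multiset of distances of the terminals from $A_\infty$ forces the $k$ components to sit at scales $\lambda^0,\lambda^2,\dots,\lambda^{2k-2}$, so the $j$-th component has terminal set $\{A_{2j-1},B_{2j-1},A_{2j},B_{2j},Z_j\}$ with $Z_j\in\{A_{2j+1},B_{2j+1}\}$ and is uniquely determined by $Z_j$; the attachment point of the next component is $Z_j$, which lies in that component's terminal set for either choice, so the choices of $Z_1,\dots,Z_{k-1}$ are free and independent, while $Z_k=A_{2k+1}$ is forced because $B_{2k+1}$ is not a terminal of the finite input. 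This yields $2^{k-1}$ solutions, which equals the claimed $2^k-1$ only for $k=1$. Hence ``a combinatorial enumeration yields $2^k-1$'' cannot be completed as written: either you must exhibit solutions outside the chain structure above (which the scale-matching argument excludes), or the count in the statement itself has to be re-examined. Since this is the one quantitative claim of the corollary, your proposal does not actually establish it.
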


We would like to slightly modify the input set $\mathcal{A}_1$ to obtain an indecomposable solution.
It turns out that a simple way is to consider the input $\mathcal{A}_0$, which is obtained from $\mathcal{A}_1$ with the additional segment $[A_0B_0]$ (see definition above).
We can then replace the segment with a single point $x$ placed in a precise position on the segment $[A_0B_0]$ or with two points $x_1,x_2$ placed on the two sides of the angle in such a way that the Steiner tree has a triple 
point at the point $x$.

\begin{theorem} \label{theo:main}
Suppose that $\alpha < \frac{\pi}{6}$ and $\lambda \leq 1/2$ satisfy~\eqref{eq:438466}.
Then the following statements hold true.
\begin{itemize}
    \item [(i)] The Steiner problem for $\mathcal{A}_0$ has exactly 2 solutions $\St_0^1$ and $\St_0^2$ which are trees. 
    The tree $\St_0^2$ is the reflection of $\St_0^1$ with respect to the angle bisector. Both solutions are indecomposable. The point $\{x\}= \St_0^1 \cap [A_0B_0]$
    (and the corresponding symmetric point $\St_0^2 \cap [A_0B_0]$)  
    has a distance 
    $\frac{1}{\lambda + 1} \sin \alpha$ from the midpoint of $[A_0B_0]$.
    \item [(ii)] The length of the solutions is
    \[
    \H(\St_0^1) = \H(\St_0^2) = \frac{\cos\alpha}{\lambda} - \frac{\sin\alpha}{\sqrt{3}\lambda} + \frac{\sqrt{3}}{1 - \lambda} \sin \alpha.
    \]
    \item [(iii)] Each $\St_0^j$, $j=1,2$, is a self-similar tree, in the 
    sense that $f_2(\St_0^j) \subset \St_0^j$, where $f_2$ is the homothety with center $A_\infty$ and ratio $\lambda^2$.
\end{itemize}
\end{theorem}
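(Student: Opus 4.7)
The plan is to combine a first-variation analysis at the segment $[A_0 B_0]$ with the classification of full Steiner trees on five terminals underlying Theorem~\ref{theo:main2}, and a self-similarity argument based on the homothety $f_2$.

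The first structural step is to show that any minimizer $\St_0$ is a tree meeting $[A_0 B_0]$ at a single interior point $x$, with tangent perpendicular to $[A_0 B_0]$. A sliding-deformation argument along $[A_0 B_0]$ rules out oblique intersections; a local exchange argument at the segment (replacing two separate perpendicular drops by one regular tripod) rules out configurations with more than one attachment; acyclicity of Steiner sets (from the general theory quoted in the introduction) then makes the connected minimizer a tree.

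The heart of the proof is the self-similar decomposition. Write $x(d)$ for the point of $[A_0 B_0]$ at signed distance $d$ from the midpoint (with $d>0$ for the solution destined to be $\St_0^1$). The goal is to prove that the minimizer admits a decomposition $\St_0^1 = T(d^*) \cup f_2(\St_0^1)$, where $T(d^*)$ is a finite subtree with leaves $\{x(d^*), A_1, B_1, A_2, B_2\}$ that connects to the tail $f_2(\St_0^1)$ along a straight edge passing through $f_2(x(d^*)) \in f_2([A_0 B_0])$. The existence of such a decomposition is forced by Theorem~\ref{theo:main2} applied to the tail of $\St_0^1$: once one knows the 5-terminal classification, one peels off the outermost 5-terminal block together with the perpendicular drop to $[A_0 B_0]$ and observes that what remains is a Steiner tree for an $f_2$-scaled copy of the same problem. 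Iterating gives $f_2(\St_0^1) \subset \St_0^1$, i.e.\ statement~(iii).

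With the self-similar ansatz in place, the remaining work is explicit computation. Placing $A_\infty$ at the origin with the angle bisector along the positive $x$-axis, and using the $120^\circ$ rule at each Steiner point of $T(d)$, one writes the positions of the Steiner points and the length $\ell(d) := \H(T(d))$ as elementary functions of $d$. Requiring the gluing of $T(d)$ and $f_2(T(d))$ at $f_2(x(d))$ to be straight (so that both tangents coincide with the common perpendicular to the $[A_0 B_0]$-parallel family) yields a single scalar equation in $d$; it simplifies to the linear identity $(\lambda+1)d = \sin\alpha$, giving $d^* = \frac{\sin\alpha}{\lambda+1}$ as claimed in~(i). The total length is then the geometric sum $\H(\St_0^1) = \sum_{k \geq 0} \lambda^{2k}\, \ell(d^*) = \ell(d^*)/(1-\lambda^2)$, and a direct expansion yields~(ii). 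Uniqueness follows because $d^*$ is determined up to sign, the two signs producing $\St_0^1$ and the mirror $\St_0^2$; indecomposability holds because every point of $\mathcal{A}_0$ that meets $\St_0^1$ does so as a leaf, while the only degree-$2$ points of $\St_0^1$ are the interface points $f_2^k(x(d^*))$ for $k\geq 1$, none of which lies in $\mathcal{A}_0$, so no proper subtree of $\St_0^1$ can be a Steiner tree over any subset of $\mathcal{A}_0$. The main obstacle I anticipate is rigorously justifying the self-similar decomposition: the set of locally minimal infinite trees on this input is rich (cf.\ Remark~\ref{rem:localminimas}), so one cannot conclude self-similarity from local optimality alone, but rather must work with the global minimizer and invoke Theorem~\ref{theo:main2} along the tail.
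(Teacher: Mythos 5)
Your overall architecture (establish a self-similar decomposition first, then solve a scalar gluing equation for $d^*$ and sum a geometric series) inverts the paper's logic, and the step you yourself flag as the main obstacle is where the argument genuinely breaks. You propose to force the decomposition $\St_0^1 = T(d^*)\cup f_2(\St_0^1)$ by ``invoking Theorem~\ref{theo:main2} along the tail,'' but that theorem concerns the input $\mathcal A_1$, and its conclusion --- decomposition into $5$-terminal full blocks glued at degree-two terminals --- is precisely the structure that $\St_0$ does \emph{not} have: the whole point of adding $[A_0B_0]$ is that the minimizer becomes a \emph{full} tree with every $A_k,B_k$ of degree one, so there are no $5$-terminal blocks to peel off. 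Nor can you get the decomposition by naive cut-and-paste: the tail $\St_0\cap\conv f_2(\mathcal A_0)$ is a connector for the \emph{scaled} problem $f_2(\mathcal A_0)$ (its boundary datum includes a free attachment point on $f_2([A_0B_0])$), and replacing it by an optimal competitor moves that attachment point and hence forces a modification of the outer piece as well, so optimality of the tail is not a consequence of optimality of $\St_0$ without a further argument. Since the set of locally minimal full* trees on this input is genuinely rich (Remark~\ref{rem:localminimas} and Section~\ref{sec:dynamics}), nothing short of a global argument determines the combinatorics, and your proposal supplies none. You also assert without proof that every terminal is a leaf; the paper needs two separate lemmas (the reflection argument of Lemma~\ref{lm:nedofull} and the angle computation of Lemma~\ref{lm:full}) to establish exactly this.

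The paper's route avoids the ansatz entirely. After showing $\St_0$ is a full tree and that consecutive horizontal edges alternate sides of the bisector (Lemma~\ref{lm:alternating}), which pins down the combinatorial type completely, it applies the Maxwell-type formula (Lemma~\ref{lm:Maxwell}) to write
\[
\H(\St_0) \;=\; x + \left(\frac{\sqrt 3}{1-\lambda}-\frac{i}{1+\lambda}\right)\sin\alpha ,
\]
where $x$ is the complex coordinate of the attachment point. The left side is real, so $\Im x = \pm\frac{\sin\alpha}{1+\lambda}$; this single reality constraint delivers items (i) and (ii) simultaneously and identifies the two mirror solutions. Only afterwards is self-similarity obtained, by running the same Maxwell computation on the tail, finding its length equals $\lambda^2\H(\St_0)$, and concluding it is a Steiner tree for $f_2(\mathcal A_0)$. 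If you want to salvage your order of argument, you would need to first prove fullness and the alternation of the horizontal edges by independent means --- at which point the Maxwell identity already gives you $d^*$ and the length directly, making the gluing equation redundant.
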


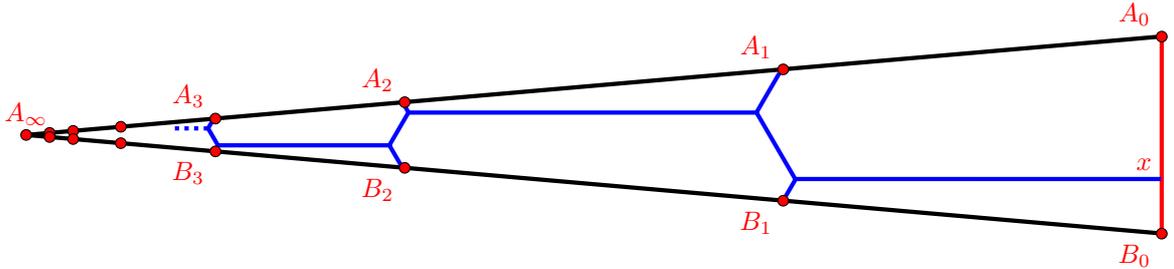
\begin{figure}[h]
    \centering
    \begin{tikzpicture}[scale=10,rotate=-5]

\coordinate [label={[red,above]:$A_\infty$}] (ab) at (0,0);

\draw [ultra thick] (ab) --++ (10:0.031) coordinate (a6) --++ (10:0.031) coordinate (a5) --++ (10:0.063) coordinate (a4) --++ (10:0.125) coordinate [label={[red,above left]:$A_{3}$}] (a3) --++ (10:0.25) coordinate [label={[red,above left]:$A_{2}$}] (a2)
--++ (10:0.5) coordinate [label={[red,above left]:$A_{1}$}] (a1) --++ (10:0.5) coordinate [label={[red,above left]:$A_{0}$}] (a0);

\draw [ultra thick] (ab) --++ (0:0.031) coordinate (b6) --++ (0:0.031) coordinate (b5) --++ (0:0.063) coordinate (b4) --++ (0:0.125) coordinate [label={[red,below left]:$B_{3}$}] (b3) --++ (0:0.25) coordinate [label={[red,below left]:$B_{2}$}] (b2)
--++ (0:0.5) coordinate [label={[red,below left]:$B_{1}$}] (b1) --++ (0:0.5) coordinate [label={[red,below left]:$B_{0}$}] (b0);

\draw[ultra thick, red] (a0) -- (b0);

\coordinate [label={[red, above left]:$x$}] (y0) at ($(a0)!+0.7222!(b0)$);

\draw[ultra thick, blue] (y0) --++ (185:0.4805) coordinate (y1) --++ (245:0.03);
\draw[ultra thick, blue] (y1) --++ (125:0.102) coordinate (y2) --++ (65:0.06);
\draw[ultra thick, blue] (y2) --++ (185:0.457) coordinate (y3) --++ (125:0.01);
\draw[ultra thick, blue] (y3) --++ (245:0.05) coordinate (y4) --++ (305:0.03);
\draw[ultra thick, blue] (y4) --++ (185:0.225) coordinate (y5) --++ (245:0.01);
\draw[ultra thick, blue] (y5) --++ (125:0.026) coordinate (y6) --++ (65:0.015);
\draw[ultra thick, blue, dotted] (y6) --++ (185:0.05);


\foreach \n in {0,1,...,6} {
    \draw [fill=red] (a\n) circle (0.2pt);
    \draw [fill=red] (b\n) circle (0.2pt);
  }
\draw [fill=red] (ab) circle (0.2pt);

\end{tikzpicture}
    \caption{One of the two Steiner trees $\St_0^1$, $\St_0^2$ 
    for the set $\mathcal A_0$, see Theorem~\ref{theo:main}.
    The other solutions is the reflection with respect to the 
    angle bisector.
    Here $\alpha = \frac \pi {36}$ and $\lambda = \frac 1 2$.}
    \label{pict:mainth}
\end{figure}

\begin{remark}\label{rem:main}
If we consider a Steiner tree $\St_0$ for $\mathcal A_0$ as 
stated in the previous theorem and insert a Steiner point 
in the vertex $x$ with two additional edges $[xA_0']$ and 
$[xB_0']$ so that $A_0'$ and $B_0'$ are on the edges of the angle 
$[A_\infty,A_1)$ and $[A_\infty,B_1)$, respectively, we obtain
a full Steiner tree for the set of terminals $\mathcal A_0' = \mathcal A_1 \cup \{A_0',B_0'\}$ placed on the sides of the 
triangle $\triangle A_0'A_\infty B_0'$.
\end{remark}

\paragraph{Structure of the paper.} Section~\ref{sec:NB} contains the notation and some general results we will use in the proofs. 
These results are well-known for a finite input and we translate them to an infinite setup. For the sake of completeness, the proofs are collected in the Appendix.
In Section~\ref{sec:lemmas} we describe the rough structure of a Steiner tree for both input sets $\mathcal{A}_0$ and $\mathcal{A}_1$.
The proofs of Theorem~\ref{theo:main2} and~\ref{theo:main} are given in Sections~\ref{sec:proof1} and~\ref{sec:proof2}, respectively.
In Section~\ref{sec:dynamics} we present a formulation to 
find a ``locally minimal'' competitor by means of a dynamical system.
Section~\ref{sec:open} collects open questions.
In the Appendix we have collected the proofs of well known results.

\section{Notation and basics} \label{sec:NB}

\subsection{Notation}

If $T$ is a subset of $\mathbb R^d$
we denote by $\overline{T}$, $\partial T$ and $\conv T$, 
respectively the closure, the boundary and the convex hull of $T$.
For $\rho>0$ and $C\in \mathbb R^d$ we denote by 
$B_\rho(C)$ the open ball centered in $C$ with radius $\rho$.
If $T$ is a set we denote by $B_\rho(T) = \bigcup_{C\in T} B_\rho(C)$
the open $\rho$-neighbourhood of $T$.
We denote with $\H(T)$ the one-dimensional Hausdorff measure 
of the set $T$.

Given $B,C,D \in \mathbb{R}^d$ we denote by $\lvert BC\rvert=\lvert C-B\rvert$ the Euclidean distance between $B$ and $C$. 
Also, $[BC]$ denotes the closed segment with endpoints at $B$ and $C$,
while $]BC[$ denotes the open segment (without the endpoints).
We denote with $(BC)$ the line passing through $B$ and $C$
and with $[BC)$ the half-line (ray) starting at $B$ and passing through $C$.
We denote with $\angle BCD$ the convex angle with vertex $C$ and sides 
containing the points $B$ and $D$ and with $\triangle BCD$ the triangle 
with vertices $B$, $C$ and $D$.

We let $\mathbb N= \{1,2,3,\dots\}$ be the set of positive whole numbers.
\subsection{Graphs and immersions}

A \emph{graph} $\Gamma$ is a pair $(V,E)$ where $V$ is any set and 
$E$ is a set such that every $e\in E$ is of the form $e=\{v,w\}$ with distinct $v,w\in V$.
The elements of $V$ are called \emph{vertices} of $\Gamma$ and the elements of $E$
are called $\emph{edges}$. If $v$ is a vertex of $\Gamma$ we define the order (or degree)
of $v$ as $\#\{e\in E\colon v\in e\}$ i.e.{} the number of edges joining in $v$.

If each vertex of $\Gamma$ has finite order, we say that $\Gamma$ is a 
\emph{locally-finite} graph.
If $E$ and $V$ are both finite, we say that $\Gamma$ is a \emph{finite} graph.
A \emph{finite path} $\gamma$ is an alternating sequence of vertices and edges: 
$v_0, e_1, v_1, e_2, v_2, \dots, e_n, v_n$ 
such that $e_k = \{v_{k-1},v_{k}\}$.
If $v_i\neq v_j$ for all $(i,j)$ with $i < j$ and $(i,j) \neq (0,n)$ we say that the path is \emph{simple}.
A (simple) path with $v_0 = v_n$ will be called a (simple) \emph{cycle}.
A path with no edges will be called \emph{trivial}
otherwise it will be called \emph{non-trivial}.
If for all pair of vertices $v,w \in V$ there is a finite path with first 
vertex $v$ and last vertex $w$ we say that the graph is \emph{connected}.
If there is no non-trivial cycle we say that the graph is a 
\emph{forest}. A connected forest is called a \emph{tree}.

Given $\varphi \colon V \to \mathbb R^d$ we say that the set
\[
  S_\varphi = \bigcup_{\{v,w\}\in E} [\varphi(v)\, \varphi(w)] \subset \mathbb R^d
\]
is a \emph{geodesic immersion} (or simply an \emph{immersion}) of $\Gamma$ in $\mathbb R^d$. The points $\varphi(v)$ are called \emph{immersed vertices} while the segments $[\varphi(v) \varphi(w)]$ with $\{v,w\}\in E$ 
are called \emph{immersed edges}.
The \emph{length} of an immersed graph is defined by
\[
  \ell(\varphi) = \sum_{\{v,w\}\in E} \lvert \varphi(v)\, \varphi(w)\rvert.
\]
Clearly one has $\ell(\varphi) \ge \H(S_\varphi)$.

If the graph is finite, the map $\varphi$ is injective 
and the segments of the immersion have pairwise disjoint interior 
(i.e.\ the intersection of $]\varphi(v)\,\varphi(w)[$ and $]\varphi(v')\,\varphi(w')[$ is empty whenever $\{v,w\}\neq\{v',w'\}$)
we say that the immersion is an embedding.

In the following we will often refer to immersed graphs, immersed vertices and immersed edges 
simply as graphs, vertices and edges.

We will say that an immersed graph $S_\varphi$ is \emph{full*} when the abstract graph is connected 
and all the convex angles between any two edges adjacent to the same vertex, are equal to $2\pi/3$. 
In particular in a \emph{full*} tree all vertices have order at most three.
The \emph{wind-rose} of an immersed graph is the union of all lines 
passing through the origin and parallel to any edge of 
the graph. 
Clearly, in the planar case $d = 2$, the wind-rose of a \emph{full*} immersed graph comprises at most 
three different lines (so the corresponding directed segments have at most six directions).

We will say that an immersed graph $S_\varphi$ is \emph{full} if it is \emph{full*} and there are no vertices of order two.

\subsection{General Steiner trees}
\label{sec:steiner}

If $S$ is a subset of a topological space, we say that $S$ is \textit{connected} if it is not the disjoint union of two non-empty closed sets. We say that $S$ is \textit{path-connected} if
given any two points of $S$ there is a continuous curve joining them in $S$.
We say that $S$ contains no loop if no subset of $S$ is homeomorphic to the circle $\mathbb S^1$. 
We say that $S$ is a topological tree if $S$ is connected and contains no loop.

Let $\mathcal{A}\subset \mathbb R^d$ and $\St\subset \mathbb R^d$. 
We say that $\St$ is a \emph{Steiner set} for $\mathcal{A}$ if $\St\cup \mathcal{A}$ is connected and 
$\H(\St) \le \H(S)$ whenever $S\subset \mathbb R^d$ and $S\cup \mathcal{A}$ is connected.
We say that $\St$ is a \emph{Steiner tree} if $\St$ is a Steiner set and is itself connected.

The following theorem collects some of the general results from \cite{paolini2013existence}.
\begin{theorem} \label{th:PaoSte}
If $\mathcal{A}\subset \mathbb R^d$ is a compact set then there exists a Steiner set 
$\St$ for $\mathcal{A}$. 
Moreover if $\St$ is a Steiner set for $\mathcal{A}$ and $\H(\St)<+\infty$ then 
\begin{enumerate}
    \item $\St\cup \mathcal A$ is compact;
    \item $\St\setminus \mathcal A$ has at most countable many connected components, 
    and each of them has positive length;
    \item $\overline \St$ contains no loop (homeomorphic image of $\mathbb S^1$);
    \item the closure of every connected component of $\St$ is a topological tree 
    with endpoints on $\mathcal A$ (in particular it has at most countable number of branching points)
    and has at most one endpoint on each connected component of $\mathcal A$;
    \item if $\mathcal A$ is finite then $\overline \St = \St\cup \mathcal A$ is an embedding of a finite tree;
    \item for almost every $\varepsilon>0$ the set $\St\setminus B_\varepsilon(\mathcal A)$
    is an embedding of a finite graph.
\end{enumerate}

\end{theorem}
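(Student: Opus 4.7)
The plan is to follow the direct method of the calculus of variations for existence, and then to use a series of local surgery arguments for the structural statements. Throughout, $\mathcal A\subset\mathbb R^d$ is compact. For existence I would take a minimizing sequence $\St_n$ of closed sets with $\St_n\cup\mathcal A$ connected and $\H(\St_n)\to \inf$. Any spanning tree on $\mathcal A$ gives a finite upper bound, and any part of $\St_n$ lying outside a large enough ball is wasted, so without loss of generality $\St_n$ is contained in a fixed compact set. By the Blaschke selection theorem, a subsequence converges in Hausdorff distance to a compact set $\St_\infty$, and the Golab lower-semicontinuity theorem for $\H^1$ along Hausdorff-converging continua gives $\H(\St_\infty)\le \liminf \H(\St_n)$. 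Connectedness of $\St_\infty\cup\mathcal A$ passes to the Hausdorff limit, so $\St_\infty$ is a Steiner set.

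Now assume $\St$ is a Steiner set with $\H(\St)<+\infty$. For (1), if there were $x\in\overline\St\setminus(\St\cup\mathcal A)$ I would pick $\rho>0$ with $\overline{B_\rho(x)}\cap\mathcal A=\emptyset$ and replace $\St\cap B_\rho(x)$ by a shorter network spanning its trace on $\partial B_\rho(x)$, contradicting minimality. For (3), if $\overline\St$ contained a loop $\gamma$ I would choose a small open arc of $\gamma$ disjoint from $\mathcal A$ and from branching points and excise it: the remaining arc together with the rest of $\St$ still connects everything, yet strictly shorter. Given (3), property (2) follows because each connected component of a loop-free continuum is either a point or a continuum of positive length; a point-component of $\St\setminus\mathcal A$ would contradict (1), and $\H(\St)<\infty$ forces at most countably many positive-length components.

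Property (4) is a structural consequence: a compact, connected, loop-free subset of $\mathbb R^d$ is a topological tree, so each component of $\overline\St$ is a topological tree with leaves necessarily in $\mathcal A$ (any free leaf could be trimmed); at most one leaf per connected component of $\mathcal A$ is possible, otherwise one could route through $\mathcal A$ itself and delete a redundant sub-arc of $\St$. For (5), when $\mathcal A$ is finite, (4) together with the classical bound on the number of Steiner points ($\le |\mathcal A|-2$) yields a finite embedded tree. For (6), I would use a coarea-type estimate on $d(\cdot,\mathcal A)$: since $\H(\St)<\infty$, for almost every $\varepsilon>0$ the slice $\St\cap\partial B_\varepsilon(\mathcal A)$ is finite, so $\St\setminus B_\varepsilon(\mathcal A)$ has finitely many leaves, and then (4) forces finitely many branching points, giving an embedded finite graph.

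The main obstacle I expect is the Golab-type lower-semicontinuity step: it must be adapted so that connectedness of $\St\cup\mathcal A$, rather than of $\St$ alone, is what passes to the Hausdorff limit, since $\St_n$ by itself need not be connected. A secondary delicate point is the loop-removal argument in (3): one needs to ensure that the chosen arc both avoids $\mathcal A$ and can be excised without creating new disconnections, which requires combining the absence of ``cut points'' along a topological circle with the fact that $\H(\St)<+\infty$ forces the branching points lying on the loop to be isolated.
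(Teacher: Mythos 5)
The paper does not actually prove Theorem~\ref{th:PaoSte}: it is imported verbatim from \cite{paolini2013existence} (``The following theorem collects some of the general results from...''), and the Appendix only proves the other auxiliary statements. So there is no in-paper argument to compare against; your sketch should be measured against the cited reference, whose overall strategy (direct method plus Go\l ab semicontinuity for existence, local surgery/cut-and-paste for the structural items) you have correctly reproduced in outline.

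As a proof, however, the sketch has genuine gaps, the most serious being exactly the one you flag but do not resolve. Go\l ab's theorem applies to Hausdorff-convergent sequences of \emph{connected} compacta (or compacta with a uniformly bounded number of components); here only $\St_n\cup\mathcal A$ is connected, and one cannot simply apply Go\l ab to $\St_n\cup\mathcal A$ either, because $\H(\mathcal A)$ may well be $+\infty$ (e.g.\ $\mathcal A$ a disk), so lower semicontinuity of $\H(\St_n\cup\mathcal A)$ says nothing about $\H(\St_n)$. The key lemma in \cite{paolini2013existence} is a \emph{localized} Go\l ab theorem, $\H(C\cap U)\le\liminf\H(C_n\cap U)$ for open $U$ and connected compacta $C_n\to C$, applied with $C_n=\overline{\St_n}\cup\mathcal A$ and $U=\mathbb R^d\setminus\mathcal A$; without stating and using this, the existence step is not complete. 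A second real gap is item (1): if $x\in\overline\St\setminus(\St\cup\mathcal A)$, then $\St\cup\{x\}$ has the same $\H^1$ measure, so $x$ cannot be ruled out by a length comparison of $\St\cap B_\rho(x)$ against ``a shorter network''; the actual point is that connectedness of $\St\cup\mathcal A$ across the missing point $x$ forces extra length (the set must ``go around'' $x$), and making that quantitative is where the work lies. Finally, in (5) and (6) you implicitly use that the pieces of $\St$ between branch points are straight segments (otherwise ``embedding of a finite graph'' in the paper's sense does not follow from finiteness of leaves and branch points alone); this is another minimality argument that should be stated. The remaining items (2), (3), (4) are argued correctly in spirit.
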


We need to strengthen the result of the previous theorem to the case when $\mathcal{A}$ is countable.
In that case it is useful to prove that the Steiner set $\St$ is locally finite also in the neighborhood 
of the isolated points of $\mathcal{A}$.
The proof of the following statements can be found in the Appendix.

\begin{lemma} \label{lm:isolated}
    Let $x_0\in \mathcal{A}$ be an isolated point of the compact set $\mathcal A$.
    Let $\St$ be a Steiner set for $\mathcal A$ with $\H(\St)<+\infty$.
    Then there exists $\rho>0$ such that $\overline{\St \cap B_\rho(x_0)}$ is an embedding of a finite graph.
\end{lemma}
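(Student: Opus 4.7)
The plan is to combine item~(6) of Theorem~\ref{th:PaoSte}, which gives a finite-graph structure for $\St$ away from $\mathcal{A}$, with a local optimality argument at the isolated point $x_0$. Since $x_0$ is isolated in the compact set $\mathcal{A}$, choose $r > 0$ such that $\overline{B_{2r}(x_0)} \cap \mathcal{A} = \{x_0\}$, so that every other point of $\mathcal{A}$ lies at distance $\geq 2r$ from $x_0$. By item~(6) applied with any $\varepsilon \in (0,r)$ in its good set, $\St \setminus B_\varepsilon(\mathcal{A})$ is an embedding of a finite graph; since the annulus $B_r(x_0) \setminus \overline{B_\varepsilon(x_0)}$ is disjoint from every $B_\varepsilon(a)$ with $a \in \mathcal{A} \setminus \{x_0\}$, the intersection $\St \cap (B_r(x_0) \setminus \overline{B_\varepsilon(x_0)})$ is automatically a finite union of arcs. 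The lemma therefore reduces to controlling what happens in the interior $\St \cap B_\rho(x_0)$ as $\rho$ shrinks.

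The main step is a competitor argument. By the coarea inequality for the $1$-Lipschitz function $y \mapsto |y - x_0|$, for a.e.\ $\rho$ the cardinality $n(\rho) := \#(\St \cap \partial B_\rho(x_0))$ is finite; fix such a $\rho \in (0,r)$ and write $\St \cap \partial B_\rho(x_0) = \{P_1,\ldots,P_n\}$. Applying item~(5) to the finite set $T = \{x_0, P_1,\ldots,P_n\}$, a Steiner tree $\St_T$ for $T$ exists and is a finite embedded tree contained in $\conv T \subseteq \overline{B_\rho(x_0)}$. The set $\tilde\St := (\St \setminus B_\rho(x_0)) \cup \St_T$ is a valid competitor: every connected component of $(\St \setminus B_\rho(x_0)) \cup \mathcal{A}$ must contain $x_0$ or some $P_i$ (otherwise it would be disconnected from the rest of $\St \cup \mathcal{A}$), and all such points are merged by $\St_T$. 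Optimality of $\St$ then gives $\H(\St \cap B_\rho(x_0)) \leq \H(\St_T)$.

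Running this comparison in both directions shows that $\St \cap \overline{B_\rho(x_0)}$ is itself a minimizer of a finite local Steiner-type problem — namely, connecting the outside-equivalence classes of the $P_i$'s to $x_0$ by a shortest set inside $\overline{B_\rho(x_0)}$ — and hence an embedded finite graph by a finite-terminals analogue of item~(5). The main obstacle is making this local problem precise: because the outside-connectivity of $\St \setminus B_\rho(x_0)$ may already identify pairs of the $P_i$'s, the correct local problem is not literally the Steiner problem for $T$ but rather its variant with a prescribed equivalence relation on $\{P_1,\ldots,P_n\}$. Showing that the minimizer of this variant is still a finite embedded tree — for instance by reducing it to a finite collection of standard Steiner problems, one for each choice of representatives from the equivalence classes, and applying item~(5) to each — is the technical core of the proof and yields the desired $\rho$.
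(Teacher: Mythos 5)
Your overall strategy --- cut out $\St\cap B_\rho(x_0)$ for a good radius $\rho$, compare with finite Steiner trees on the boundary points via items (5)--(6) of Theorem~\ref{th:PaoSte}, and conclude by minimality --- is the same one the paper uses, and your preparatory steps (isolating $x_0$, coarea to get finitely many boundary points, validity of the competitor) are sound. The gap sits exactly in the step you yourself flag as the technical core, and the fix you sketch does not work as stated. The correct local problem is: among sets $S\subset\overline{B_\rho(x_0)}$ such that $\bigl(S\cup\{x_0,P_1,\dots,P_n\}\bigr)/\!\sim$ is connected (where $P_i\sim P_j$ when they lie in the same component of $(\St\setminus B_\rho(x_0))\cup(\mathcal A\setminus\{x_0\})$), find one of minimal length. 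A minimizer of this problem need not be a single tree through one representative of each class: it can be a forest whose components each pick up boundary points from \emph{several different} classes. For instance, with classes $\{P_1,P_2\}$ and $\{P_3,P_4\}$, a tree on $\{P_1,P_3\}$ together with a separate tree on $\{P_2,x_0\}$ is admissible and may beat every single tree on $\{x_0,P_i,P_j\}$ with one $P$ per class. So enumerating ``one standard Steiner problem per choice of representatives'' misses admissible configurations and cannot certify that the actual trace $\St\cap\overline{B_\rho(x_0)}$ is a finite embedded graph. The correct enumeration is over set partitions of the terminal set whose block structure, combined with $\sim$, yields a connected quotient; this is still a finite family reducible block-by-block to item~(5), but it is a genuinely larger family than the one you propose.

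The paper avoids this combinatorics altogether. It first shows (using $\H(\St)<+\infty$ to bound the number of components crossing an annulus) that for small enough $\varepsilon$ every connected component of $\St\setminus\mathcal A$ meeting $\overline{B_\varepsilon(x_0)}$ contains $x_0$; it then treats each connected component $C$ of the trace of $\St$ in the closed ball \emph{separately}, replacing $C$ by a Steiner tree for the finite set $C\cap\partial B_\rho(x_0)$ (together with $x_0$ for the component through $x_0$). This substitution preserves global connectivity automatically, because the replacement reconnects exactly the boundary points that $C$ connected, so no equivalence relation on the $P_i$ ever enters; each component is then a finite embedded tree by item~(5), and there are finitely many components since each must reach the finite set $\St\cap\partial B_\rho(x_0)$. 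If you reorganize your last step component by component in this way, your proof closes; as written, the reduction in your final paragraph is incorrect.
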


\begin{corollary} \label{cor:isolated}
Let $\mathcal A\subset \mathbb R^d$ be a compact set.
Let $\mathcal A'$ be the set of accumulation points of $\mathcal A$ 
and $\mathcal A_0 = \mathcal A\setminus \mathcal A'$ be the set 
of isolated points of $\mathcal A$.
Let $\St$ be a Steiner set for $\mathcal A$ with $\H(\St)<+\infty$.
Then, for almost every $\rho>0$ the set $\St\setminus B_\rho(\mathcal A')$
is an embedding of a finite graph.
\end{corollary}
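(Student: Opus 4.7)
Fix $\rho>0$. The plan is to decompose $\St\setminus B_\rho(\mathcal A')$ into finitely many pieces, each an embedded finite graph, using Lemma~\ref{lm:isolated} near isolated points of $\mathcal A$ and Theorem~\ref{th:PaoSte}(6) far from $\mathcal A$.

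The key reduction is that the set $K:=\mathcal A\setminus B_\rho(\mathcal A')$ is finite. Being closed in the compact set $\mathcal A$, $K$ is itself compact. Moreover every point of $K$ is isolated in $\mathcal A$: if $a\in K$ were an accumulation point of $\mathcal A$, then $a\in \mathcal A'\subset B_\rho(\mathcal A')$, contradicting $a\in K$. A compact set consisting entirely of isolated points of $\mathcal A$ must be finite, since otherwise it would accumulate in itself and produce a point of $\mathcal A'\cap K=\emptyset$. Write $K=\{x_1,\dots,x_N\}$.

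For each $x_i$, apply Lemma~\ref{lm:isolated} to obtain $r_i>0$ such that $\overline{\St\cap B_{r_i}(x_i)}$ is an embedded finite graph. Shrinking if needed, I may assume the balls $B_{r_i}(x_i)$ are pairwise disjoint and each contains no other point of $\mathcal A$. Introduce the exterior piece
\[
E:=(\St\cup\mathcal A)\setminus B_\rho(\mathcal A')\setminus \bigcup_{i=1}^N B_{r_i/2}(x_i).
\]
This is closed in the compact set $\St\cup\mathcal A$ (Theorem~\ref{th:PaoSte}(1)), hence compact, and it intersects $\mathcal A$ only in $K\setminus\bigcup_i B_{r_i/2}(x_i)=\emptyset$. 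Thus $\dist(E,\mathcal A)\ge \varepsilon_0>0$ for some $\varepsilon_0$, and Theorem~\ref{th:PaoSte}(6) yields some $\varepsilon<\varepsilon_0$ such that $\St\setminus B_\varepsilon(\mathcal A)$ is an embedded finite graph, of which $E$ is a closed subset.

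It remains to glue the pieces $\overline{\St\cap B_{r_i/2}(x_i)}$ and $E$ into a single embedded finite graph by declaring their boundary intersections with $\St$ as additional vertices. To ensure these intersections are finite, I invoke the coarea inequality: since $\dist(\cdot,\mathcal A')$ is $1$-Lipschitz on $\St$ and $\H(\St)<+\infty$,
\[
\int_0^{+\infty}\#\bigl(\St\cap \partial B_t(\mathcal A')\bigr)\,dt\le \H(\St)<+\infty,
\]
so $\St\cap\partial B_\rho(\mathcal A')$ is finite for almost every $\rho$, which accounts for the ``almost every $\rho$'' in the statement. Analogous coarea estimates around each $x_i$ allow the radii $r_i/2$ to be chosen so that $\St\cap\partial B_{r_i/2}(x_i)$ is finite as well. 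The main obstacle is the compactness argument of the second paragraph: it is the step that cannot be imported directly from Theorem~\ref{th:PaoSte} and crucially exploits that isolated points of a compact set cannot accumulate without creating an accumulation point lying in $\mathcal A'$.
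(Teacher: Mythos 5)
Your proof is correct and follows essentially the same route as the paper's: reduce to the finiteness of $\mathcal A\setminus B_\rho(\mathcal A')$, invoke Lemma~\ref{lm:isolated} at each of these finitely many isolated points, use Theorem~\ref{th:PaoSte}(6) away from $\mathcal A$, and glue. The only addition is your explicit coarea argument guaranteeing finitely many crossings of $\partial B_\rho(\mathcal A')$ for almost every $\rho$, a detail the paper leaves implicit.
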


The previous corollary allows us to use the terminology of finite Steiner trees for 
a general Steiner tree $\St$ with input $\mathcal A$. 
We will say that a point $x\in \St\setminus \mathcal A$ is a \emph{Steiner point} 
if there is $\rho>0$ such that $\St \setminus B_\rho(\mathcal A')$ is a finite 
embedded graph and $x$ is a vertex of this graph.
We will say that a segment $[x,y]\subset \St\setminus\mathcal A'$ 
is an \emph{edge} of the Steiner tree $\St$ 
if there exists $\rho>0$ such that $[x,y]$ is an embedded edge of the finite embedded
graph $\St \setminus B_\rho(\mathcal A')$.

Notice also that if $\St$ is a Steiner set for $\mathcal A$ then $\St\subset \conv \mathcal A$.
In fact we know that the projection $\pi \colon \mathbb R^d \to \conv \mathcal A$ is a $1$-Lipschitz map,
which implies that $\H(X) \le \H(\pi(X))$ for every measurable set $X\subset \mathbb R^d$. 
Thus for $\St' = \pi(\St)$ we have that $\St'\cup \mathcal A=\pi(\St\cup \mathcal A)$
and hence $\St' \cup \mathcal A$ is connected and $\H(\St')\le \H(\St)$. 
This implies that $\St'$ is itself a Steiner set for $\mathcal A$. 
But then $\St'=\St$, otherwise one would have $\H(\St')<\H(\St)$ which is a contradiction.

\begin{remark} \label{rem:exitsandgood}
In our setting $\mathcal A'$ consists of single point $\{A_\infty\}$. 
By Theorem~\ref{th:PaoSte} a solution $\St_i$ to the Steiner problem with input $\mathcal{A}_i$, $i=0,1$ exists; clearly it has a finite length.
By Corollary~\ref{cor:isolated} outside of a neighbourhood of $\{A_\infty\}$ the set $\St_i$ inherits the properties of a finite Steiner tree.
\end{remark}

\subsection{Maxwell's length formula}

In the following lemma we identify the Euclidean plane as the complex plane 
$\mathbb{C}$.
The proof of the following Lemma can be found in the Appendix.

\begin{lemma}[Maxwell-type formula] \label{lm:Maxwell}
Let $S_\varphi$ be a full*, finite, immersed tree and let 
$p_1,p_2,\dots, p_n \in \mathbb C$ be the vertices 
of order less than three. Suppose $n>1$.
If $p_k$ is vertex of order one we define 
$c_k$ as the unit complex number representing the outer direction of 
the unique edge of $p_k$.  
If $p_k$ is a point of degree two then the two edges define an angle of $\frac 2 3 \pi$ 
and we let $c_k$ be the unit complex number with the direction, away from $p_k$, of the third implied edge which would complete 
a regular tripod in $p_k$.

Then one can write the length of the tree as
\begin{equation}\label{eq:maxwell}
\ell(\varphi) = \sum_{k=1}^n \bar{c_k} p_k.
\end{equation}
In particular the right hand side of~\eqref{eq:maxwell} is a real number.
\end{lemma}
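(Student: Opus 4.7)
The plan is to rewrite the sum of edge lengths as a sum over vertices, using the elementary complex identity $|z|=\bar{u}\,z$ for $u=z/|z|$. First I would associate to each ordered pair of adjacent vertices $(v,w)$ the unit complex number
\[
e_v(w) := \frac{\varphi(w)-\varphi(v)}{\lvert\varphi(w)-\varphi(v)\rvert},
\]
so that the length of the edge $\{v,w\}$ equals $\overline{e_v(w)}\bigl(\varphi(w)-\varphi(v)\bigr)$. Using the antisymmetry $e_w(v)=-e_v(w)$ I can split this contribution symmetrically between the two endpoints, writing each edge's length as $\overline{e_w(v)}\,\varphi(v)+\overline{e_v(w)}\,\varphi(w)$. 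Summing over all edges and regrouping by vertex yields
\[
\ell(\varphi) \;=\; -\sum_{u\in V}\overline{E_u}\,\varphi(u), \qquad E_u:=\sum_{x\sim u}e_u(x),
\]
so that the coefficient of $\varphi(u)$ is controlled by the total outward unit vector at the vertex $u$.

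Next I would invoke the full* hypothesis. At any interior vertex of order three, the three outward unit directions make pairwise angles $2\pi/3$, so they are of the form $\zeta,\zeta\omega,\zeta\omega^2$ with $\omega=e^{2\pi i/3}$, and their sum vanishes; such vertices therefore contribute nothing. The remaining terms come from the vertices $p_1,\dots,p_n$ of order at most two, for which I must verify $E_{p_k}=-c_k$. For $p_k$ of order one, the unique outward direction $e_{p_k}(x)$ is the opposite of the ``outer direction'' $c_k$ continuing the edge past $p_k$. For $p_k$ of order two with outward directions $e_1,e_2$ at angle $2\pi/3$, the defining property of $c_k$ is exactly the tripod-completion relation $e_1+e_2+c_k=0$. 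In both cases $E_{p_k}=-c_k$, and substituting into the displayed identity gives $\ell(\varphi)=\sum_{k=1}^n\overline{c_k}\,p_k$.

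The final assertion, that the sum is a real number, is then automatic: the left-hand side is a sum of Euclidean lengths. The main care in the whole argument is bookkeeping: tracking signs and orientations carefully (unordered edges versus directed unit vectors) and checking that the two a priori different conventions defining $c_k$ at order-one and order-two vertices are unified by the single relation $E_{p_k}+c_k=0$. Once this sign check is done, the remainder is a direct algebraic manipulation and no analytic difficulty is present.
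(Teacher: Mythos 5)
Your proposal is correct and follows essentially the same route as the paper: write each edge length as $\overline{u}\,z$ with $u=z/|z|$, split the contribution between the two endpoints, regroup by vertex, and use the full* condition to make the order-three vertices cancel while the order-one and order-two vertices produce exactly $\overline{c_k}$. The only difference is cosmetic bookkeeping (you use inward unit vectors with a global minus sign where the paper uses the outward prolongation directions directly), and your sign verification $E_{p_k}=-c_k$ is carried out correctly in both cases.
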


\begin{remark}
    If $\sum p_k$ converges absolutely then we can pass to the limit in~\eqref{eq:maxwell}.
    Clearly when $\{p_1, p_2, \dots \} = \mathcal{A}_1$ this sum converges absolutely if we choose $A_\infty$ to be the origin of the complex plane.
\end{remark}

\subsection{Melzak reduction}

The following lemma is an immediate corollary of Ptolemy's theorem
(see the Appendix for the proof).

\begin{lemma}[Melzak's reduction,~\cite{melzak1961problem}] \label{lm:Melzak}
Let $\St$ be a Steiner tree for $\mathcal{A} \subset \mathbb{R}^2$.
If $\St$ contains a branching point $q$ which is adjacent to two isolated points of $\mathcal{A}$, say $p_1$ and $p_2$, 
and the circle passing through $p_1,p_2$ and $q$ contains no other point of $\St$ apart 
from the two edges $[p_1q]$ and $[p_2q]$,
then the length $\H(\St)$ is equal to the length of the tree $\St'$ obtained from $\St$ 
by removing the points $p_1$ and $p_2$ and the edges $[p_1q]$ and $[p_2q]$ and prolonging the remaining edge arriving 
in $q$ up to a new vertex $p$ such that $p p_1 p_2$ is an equilateral triangle and (among the two possibilities) $p$ and $q$ are on opposite sides of the line  $(p_1,p_2)$.
\end{lemma}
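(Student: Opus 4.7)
The plan is to apply Ptolemy's theorem to a carefully chosen cyclic quadrilateral and then verify a collinearity, so that the length change from $\St$ to $\St'$ cancels exactly. First, since $q$ is a branching point of a planar Steiner tree, the standard first-order variation argument forces the three edges at $q$ to meet pairwise at angle $2\pi/3$; in particular $q$ has degree exactly three, and the third neighbour of $q$, call it $r$, is well defined, with $[qp_1]$, $[qp_2]$, $[qr]$ mutually at angles $2\pi/3$.

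Next, I would identify the circle $\Gamma$ passing through $p_1$, $q$, $p_2$. The point $p$ from the statement (the equilateral partner of $p_1, p_2$ on the side of $(p_1 p_2)$ opposite to $q$) satisfies $\angle p_1 p p_2 = \pi/3$, which is supplementary to $\angle p_1 q p_2 = 2\pi/3$; by the inscribed angle theorem this forces $p \in \Gamma$, with the four points appearing in cyclic order $p_1, q, p_2, p$. Applying Ptolemy's theorem to this cyclic quadrilateral gives
\[
|p_1 p_2| \cdot |pq| = |qp_1| \cdot |p p_2| + |qp_2| \cdot |p p_1|,
\]
and since $\triangle p p_1 p_2$ is equilateral, the three quantities $|p p_1|$, $|p p_2|$, $|p_1 p_2|$ coincide, whence $|pq| = |qp_1| + |qp_2|$.

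The remaining geometric ingredient is to show that $p$, $q$, $r$ are collinear with $q$ between $p$ and $r$. Since $|p p_1| = |p p_2|$, the two arcs of $\Gamma$ from $p$ to $p_1$ and from $p$ to $p_2$ that avoid $q$ are equal, so the inscribed angles $\angle p_1 q p$ and $\angle p_2 q p$ are equal; as they sum to $\angle p_1 q p_2 = 2\pi/3$, each equals $\pi/3$. Thus the ray from $q$ through $p$ bisects $\angle p_1 q p_2$, and its opposite ray makes angle $2\pi/3$ with each of $[qp_1]$ and $[qp_2]$; by the uniqueness of such a direction, this opposite ray must coincide with $[qr)$. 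Consequently $|pr| = |pq| + |qr| = |qp_1| + |qp_2| + |qr|$, and summing the edge changes from $\St$ to $\St'$,
\[
\H(\St') - \H(\St) = -|qp_1| - |qp_2| - |qr| + |pr| = 0.
\]

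The argument is pure planar geometry and I do not expect a serious technical obstacle; the two points needing care are verifying the correct cyclic order $p_1, q, p_2, p$ on $\Gamma$ (so that the standard Ptolemy identity, with the right pairing of diagonals and sides, is the one that applies) and confirming that it is the ray \emph{opposite} to $[qp)$, and not $[qp)$ itself, that is collinear with $[qr]$.
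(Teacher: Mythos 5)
Your proof is correct and follows essentially the same route as the paper's: establish cocircularity of $p,p_1,p_2,q$ via the supplementary inscribed angles $\pi/3$ and $2\pi/3$, deduce collinearity of $[pq]$ with the third edge at $q$ from the inscribed-angle theorem, and conclude with Ptolemy's identity to get $|pq|=|p_1q|+|p_2q|$. You are somewhat more explicit than the paper about the cyclic ordering and about why the prolongation goes through $q$ to the correct side, but the argument is the same.
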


In the following we will call $p$ the \textit{Melzak point} for $p_1$ and $p_2$.

\subsection{Convexity of length}

Let $G = (V,E)$ be an abstract graph and $\varphi$ an immersion of $G$ into $\mathbb R^d$
with $\ell(\varphi)<+\infty$.
If $V_0\subset V$ is fixed, we say that the immersion $\varphi$
is \emph{locally minimal} with $V_0$ fixed if 
there exists $\delta>0$ such that 
$\ell(\varphi) \le \ell(\psi)$
for every map $\psi\colon V\to \mathbb R^d$ with 
$\psi(v)=\varphi(v)$ when $v\in V_0$ and
$\lvert \psi(v)\, \psi(v)\rvert < \delta$
for all $v\in V$.

The following theorem shows that the convexity of the length function implies that the directions of the edges of a locally minimal immersion are uniquely determined.
Often this is enough to deduce the uniqueness of locally, and hence globally minimal immersion of a given graph.

\begin{theorem} \label{th:uniq}
Let $\Gamma = (V,E)$ be a graph and $\varphi, \psi \colon V \to \mathbb R^d$ be two locally minimal immersions of $G$ into $\mathbb{R}^d$
with fixed $V_0 \subset V$. Suppose that $\ell(\varphi) < +\infty$.
Then $\ell(\psi) = \ell(\varphi)$ and one has that the edges of $\varphi$ are parallel to the corresponding edges of $\psi$.
More precisely, given any edge $\{v,w\}\in E$ either it has zero length in one of the immersions ($\varphi(v)=\varphi(w)$
or $\psi(v)=\psi(w)$) or there exists $t > 0$ such that $\varphi(v)-\varphi(w) = t(\psi(v)-\psi(w))$.
\end{theorem}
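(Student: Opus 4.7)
The key fact is that the length functional $\ell$ is convex as a function of the immersion $\varphi$. Indeed, each summand $\varphi \mapsto |\varphi(v)-\varphi(w)|$ is the composition of the Euclidean norm with a linear map, hence convex; moreover the corresponding triangle inequality is strict unless the two summed vectors are non-negative multiples of one another. My plan is to exploit exactly this, by comparing $\varphi$ and $\psi$ through their linear interpolation.

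The first step is to prove that $\ell(\varphi) = \ell(\psi)$. Consider $\chi_t := (1-t)\varphi + t\psi$ for $t \in [0,1]$; since $\varphi$ and $\psi$ agree on $V_0$, so does $\chi_t$, making it an admissible competitor. In the setting of the paper both immersions take values in $\conv\mathcal{A}$, which is bounded, so $M := \sup_{v\in V}|\varphi(v)-\psi(v)| < +\infty$ and $\chi_t$ lies in an arbitrarily small uniform neighbourhood of $\varphi$ whenever $t < \delta/M$, where $\delta$ is the threshold from the local minimality of $\varphi$. Local minimality combined with convexity then yields
\[
\ell(\varphi) \le \ell(\chi_t) \le (1-t)\ell(\varphi) + t\ell(\psi),
\]
so $\ell(\varphi) \le \ell(\psi)$. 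The symmetric argument, letting $t\to 1^-$ and using local minimality of $\psi$, gives the reverse inequality.

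Given $\ell(\varphi)=\ell(\psi)$, the chain of inequalities above is forced to be an equality for every small $t$, and in particular equality must hold in the triangle inequality on each single edge:
\[
\bigl|(1-t)(\varphi(v)-\varphi(w)) + t(\psi(v)-\psi(w))\bigr| = (1-t)|\varphi(v)-\varphi(w)| + t|\psi(v)-\psi(w)|.
\]
Equality in the Euclidean triangle inequality between two nonzero vectors holds only when they are non-negative multiples of one another; hence, unless one of the two edges has collapsed, the vectors $\varphi(v)-\varphi(w)$ and $\psi(v)-\psi(w)$ differ by a positive scalar, which is the claimed parallelism with matching orientation.

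The main obstacle is the infinite-graph aspect: the definition of local minimality only controls perturbations that are uniformly small, so one needs a uniform bound on $|\varphi(\cdot)-\psi(\cdot)|$ to apply it to $\chi_t$. This is automatic in our geometric setting thanks to $\St \subset \conv\mathcal{A}$, but if a more abstract statement were needed one could circumvent the boundedness issue by perturbing only finitely many vertices at a time, running the convexity/local-minimality argument on a finite subgraph, and reaching the same pointwise conclusion about edge directions one edge at a time.
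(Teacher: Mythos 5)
Your proof is correct and follows essentially the same route as the paper: convexity of $\ell$ along the linear interpolation between the two immersions, local minimality at both endpoints forcing the interpolated length to be affine and hence constant, and then equality in the per-edge triangle inequality yielding the parallelism claim. Your extra care about needing a uniform bound on $\sup_v|\varphi(v)-\psi(v)|$ to make $\chi_t$ an admissible perturbation on an infinite vertex set is a point the paper's proof glosses over, and your two suggested fixes (boundedness via $\conv\mathcal{A}$, or working edge by edge on finite subgraphs) are both sound.
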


\section{Preliminary lemmas} \label{sec:lemmas}

\begin{lemma}\label{lm:473988}
Let $\mathcal{A}$ be a compact subset of the Euclidean plane and let $\mathcal W$ be a closed geometrical angle 
of size at least $\frac 2 3 \pi$ such that $\mathcal{A} \cap \mathcal W = \emptyset$. 
Let $\St$ be a Steiner tree for $\mathcal{A}$ with $\H(\St)<\infty$. Then $\St \cap \mathcal W$ is either a segment or an empty set.
\end{lemma}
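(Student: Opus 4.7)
The plan is to analyze the intersection $\St \cap \mathcal W$ componentwise via a competitor argument. As a first step, I would use Corollary~\ref{cor:isolated} together with $\dist(\mathcal W, \mathcal A') > 0$ (which follows from the fact that $\mathcal A' \subset \mathcal A$ is compact and disjoint from the closed set $\mathcal W$) to conclude that $\St \cap \mathcal W$ is a finite embedded graph; in particular it has only finitely many connected components, each a finite subtree of $\St$ contained in $\mathcal W$.

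For each connected component $C$ of $\St \cap \mathcal W$, I would denote by $P = \{p_1,\dots,p_k\} \subset C \cap \partial\mathcal W$ the set of \emph{exit points}, i.e.{} the points where $\St$ has an edge leaving $\mathcal W$. The key reduction is that global minimality of $\St$ forces $C$ itself to be a Steiner tree for $P$ in the whole plane: otherwise, replacing $C$ by any strictly shorter connected set $C'$ containing $P$ would produce a valid competitor $\St' = (\St \setminus C) \cup C'$ with $\H(\St') < \H(\St)$. A case analysis on $k$ then settles the easy regimes: if $k \le 1$, the component $C$ is detachable from $\St$ without affecting the connectedness of $\St \cup \mathcal A$, contradicting minimality unless $C$ is trivial; if $k = 2$, the Steiner tree for two points is the segment joining them, so $C = [p_1 p_2]$.

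The main obstacle is ruling out $k \ge 3$, which I would handle using the angle hypothesis. By the law of cosines, for any three points of $\partial\mathcal W$ the triangle they form has an interior angle $\ge 2\pi/3$ at one of its vertices (the one closest to the apex $V$ along the ray containing at least two of the three points, or a collinear-interior vertex whose angle is $\pi$); moreover this inequality is strict whenever the wide-angle vertex is not $V$ and $\theta > 2\pi/3$. By Fermat's characterization, the Steiner tree for any such triple is then a ``V'' at the wide-angle vertex rather than a regular tripod, so $C$ would contain a degree-two vertex $p_*$ whose two in-$\mathcal W$ edges meet at an angle strictly greater than $2\pi/3$. But $p_* \in P$, so $\St$ has a third edge at $p_*$ leaving $\mathcal W$, making $p_*$ a Steiner point of $\St$; the $2\pi/3$-condition at every branching of $\St$ then forces the in-$\mathcal W$ angle to equal exactly $2\pi/3$, contradicting the strict inequality. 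The degenerate sub-configurations are handled similarly: three collinear exits on one ray would put an interior exit on the segment $C$, giving a non-terminal degree-two vertex of $\St$ with edges at angle $\pi$, again incompatible with the Steiner-point condition.

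Finally, to upgrade ``each component is a (possibly degenerate) segment'' to ``$\St \cap \mathcal W$ is a single segment or empty'', I would observe that two disjoint segment-components $C_1, C_2$ of $\St \cap \mathcal W$ would be joined in $\St$ only through a path in $\St \setminus \mathcal W^\circ$; by convexity of $\mathcal W$ a direct segment through $\mathcal W$ between appropriate endpoints of $C_1$ and $C_2$ provides a strictly shorter re-routing, contradicting minimality. Combining the steps yields the claim.
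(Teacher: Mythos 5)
Your overall strategy (competitor arguments component by component, then a uniqueness step) is reasonable, but it diverges from the paper's proof and has two genuine gaps. The paper first shows that $\St$ has \emph{no Steiner points in $\mathcal W$ at all}: at any tripod the three edge directions are $2\pi/3$ apart, so at least one of them lies in the closed cone of directions of $\mathcal W$ (which has opening $\ge 2\pi/3$); following that edge one finds another vertex of $\St$ inside $\mathcal W$, which cannot be a terminal, hence is again a Steiner point, and iterating produces infinitely many Steiner points in $\mathcal W$ accumulating at a point of $\mathcal A\cap\mathcal W=\emptyset$, a contradiction. Your substitute for this step --- the Fermat-point analysis of triples --- does not suffice. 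First, it only treats $k=3$ exit points: for $k\ge 4$ the component $C$ is a Steiner tree for $k$ points on the two rays and could a priori contain interior branching points; knowing the Steiner tree of every \emph{triple} of exit points tells you nothing about the topology of the Steiner tree of all of them, and you have no mechanism to exclude a tripod in the interior of $\mathcal W$. Second, even for $k=3$ your strictness claim fails exactly in the case you flag but do not resolve: if the wide-angle vertex is the apex $V$ and the opening is exactly $2\pi/3$, the in-$\mathcal W$ angle equals $2\pi/3$ and a degree-three Steiner point of $\St$ at $V$ with two edges along the two rays and one edge leaving $\mathcal W$ is locally admissible, so no contradiction arises from the angle condition alone. (Note also that when the wide-angle vertex $p_*$ is \emph{not} $V$ the inequality is automatically strict for every $\theta\ge 2\pi/3$, since $\angle Vp_*q<\pi-\theta$ forces $\angle q'p_*q>\theta$; your extra hypothesis ``$\theta>2\pi/3$'' is not where the difficulty lies.)

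The second gap is the final step. Saying that ``by convexity of $\mathcal W$ a direct segment between appropriate endpoints of $C_1$ and $C_2$ provides a strictly shorter re-routing'' is not a valid competitor argument: you cannot delete the connecting path of $\St$ outside $\mathcal W$ (other terminals may hang off it), and in any case that path has length \emph{at least} the direct distance between its endpoints, so nothing is saved. The correct move, and the one the paper makes, again uses the hypothesis on the opening: if $[A_1B_1]$ and $[A_2B_2]$ are two crossing segments with $A_i$ on one ray and $B_i$ on the other, and $\lvert A_1B_1\rvert\le\lvert A_2B_2\rvert$, then $[A_2B_2]$ is the strictly longest side of $\triangle A_2WB_2$ (it faces the angle $\ge 2\pi/3$ at the apex $W$), so both $\lvert A_1A_2\rvert$ and $\lvert B_1B_2\rvert$ are strictly smaller than $\lvert A_2B_2\rvert$; deleting the open segment $]A_2B_2[$ splits $\St$ into two components, and whichever of $[A_1A_2]$, $[B_1B_2]$ reconnects them yields a strictly shorter admissible competitor. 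This is the second place where the $2\pi/3$ hypothesis is essential, and it is absent from your sketch. To repair your proof you would essentially have to import both of these arguments, at which point you would have reproduced the paper's proof with extra detours.
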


\begin{proof}
First of all we show that there are no Steiner points in $\mathcal W$. 
This is because in a Steiner point at least one of the three directions is contained in the angle $V$ and hence in that direction another Steiner point should be found (because there are no points of $\mathcal A$ in $V$), 
which is also inside the angle and further away from the vertex. 
These Steiner points should have an accumulation point on $A$ which is not possible since $A \cap \mathcal W = \emptyset$.

Since there are no Steiner points in $\mathcal W$ we can state that $\St \cap \mathcal W$ is composed of line segments with 
end points on $\partial \mathcal W$. Assume that there are two such segments $[A_1B_1]$ and $[A_2B_2]$, where $A_1,A_2$ belong to the same side of $\mathcal W$ while $B_1,B_2$ are on the other side. 
Let $W$ be the vertex of the angle.
Without loss of generality we assume that $|A_1B_1| \leq |A_2B_2|$. Then both $|A_1A_2|$ and $|B_1B_2|$ are shorter than $|A_2B_2|$, because the longest side of the triangle $A_2B_2W$ is $[A_2B_2]$ and $[A_1A_2],[B_1B_2]$ are subsets of $[A_2W],[B_2W]$, respectively. 

Since $\St$ is connected there is a path in $\St\setminus \mathcal W$ which joins the two segments $[A_1 B_1]$ and $[A_2 B_2]$. 
Hence if we remove the segment $[A_2 B_2]$ and replace it with 
$[A_1B_1]$ or $[A_2B_2]$ we are able to obtain a connected set 
which is shorter than the original tree $\St$ but with the same 
terminals. 
This would contradict the minimality of $\St$  and conclude the proof.
\end{proof}

\begin{lemma}\label{lm:321066}
Suppose that $\alpha < \frac{\pi}{6}$ and $\lambda$ satisfy \eqref{eq:438466}.
Let $\St$ be a Steiner tree for the set $\mathcal A_0$ or $\mathcal{A}_1$.
Then for all $k\in \mathbb N$ there exist two disjoint convex sets $\mathcal C_k$ and $\mathcal D_k$ 
such that $A_j,B_j\in \mathcal C_k$ for all $j\le k$, $A_j,B_j\in \mathcal D_k$ for all $j>k$,
and $\St\setminus (\mathcal C_k \cup \mathcal D_k)$ is a line segment with one end point on $\partial \mathcal C_k$ and one 
end point on $\partial \mathcal D_k$.
\end{lemma}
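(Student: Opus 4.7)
I will find, for each $k$, a single straight segment $\sigma$ of $\St$ that bridges the outer cluster $\{A_j,B_j\}_{j\le k}$ to the inner cluster $\{A_j,B_j\}_{j>k}\cup\{A_\infty\}$; the sets $\mathcal{C}_k$ and $\mathcal{D}_k$ will then be extracted as suitable convex neighbourhoods of the two components of $\St\setminus\sigma$.

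The geometric centerpiece is the point $V_k^* := [A_kB_{k+1}]\cap[B_kA_{k+1}]$, the intersection of the two diagonals of the trapezoid $T_k$ with corners $A_k,B_k,B_{k+1},A_{k+1}$; by symmetry $V_k^*$ lies on the angle bisector. At $V_k^*$ I consider the top wedge $\mathcal{W}_k^+$ bounded by the rays through $A_k$ and $A_{k+1}$, and the symmetric bottom wedge $\mathcal{W}_k^-$ bounded by the rays through $B_k$ and $B_{k+1}$. A direct computation gives the opening $\pi - 2\arctan\bigl(\frac{1+\lambda}{1-\lambda}\tan\alpha\bigr)$ for each wedge; the hypothesis \eqref{eq:438466} is equivalent to $\tan\alpha < \frac{1-\sqrt\lambda}{\sqrt{3}(1+\sqrt\lambda)}$, and since $\sqrt\lambda \ge \lambda$ this implies $\tan\alpha < \frac{1-\lambda}{\sqrt{3}(1+\lambda)}$, forcing the opening to exceed $2\pi/3$. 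Monotonicity of the angles $\arg(A_j-V_k^*)$ in $j$ shows that for $j<k$ the ray $V_k^*A_j$ lies below the ray to $A_k$, while for $j>k+1$, and also for $V_k^*A_\infty$ along the negative $x$-axis, the ray lies beyond the ray to $A_{k+1}$. Together with the fact that all $B_j$ lie below the bisector, this shows that no terminal is in the interior of $\mathcal{W}_k^+$, and symmetrically for $\mathcal{W}_k^-$. The boundary terminals $A_k, A_{k+1}$ (resp.\ $B_k, B_{k+1}$) are excluded by contracting the wedge by an arbitrarily small angle $\varepsilon$, applying Lemma~\ref{lm:473988} to the contracted wedge (still of opening $>2\pi/3$), and letting $\varepsilon\to 0$. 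Thus $\St$ restricted to each wedge is a single line segment (or empty), lying in the upper and lower sub-triangles of $T_k$ cut out by the diagonals.

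The remaining step, which I expect to be the main obstacle, is to combine these two wedge segments into a single crossing segment $\sigma$ from $[A_kB_k]$ to $[A_{k+1}B_{k+1}]$. The left and right sub-triangles of $T_k$ have opening $2\arctan\bigl(\frac{1+\lambda}{1-\lambda}\tan\alpha\bigr)<\pi/3$ at $V_k^*$ and are not directly accessible through Lemma~\ref{lm:473988}. To rule out that $\St$ enters these side sub-triangles in a nontrivial (zigzag) way, I will combine (i) the acyclicity of $\St$, (ii) the $2\pi/3$-angle conditions at Steiner points and at terminals of degree $\ge 2$, and (iii) the global minimality of $\St$: any zigzag configuration can be short-circuited by a straight competitor, contradicting minimality.

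Once $\sigma$ is identified, I take $\mathcal{C}_k$ (resp.\ $\mathcal{D}_k$) to be a convex neighbourhood of the outer (resp.\ inner) component of $\St\setminus\sigma$, for instance the interior of the convex hull of that component together with all the required outer (resp.\ inner) terminals. These sets are convex and, being separated by $T_k$, disjoint; the endpoints of $\sigma$ lie on their boundaries, and $\St \setminus (\mathcal{C}_k \cup \mathcal{D}_k) = \sigma$ by construction.
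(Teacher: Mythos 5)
Your wedge computation at $V_k^*$ checks out: the opening $\pi-2\arctan\bigl(\tfrac{1+\lambda}{1-\lambda}\tan\alpha\bigr)$ does exceed $2\pi/3$ under~\eqref{eq:438466}, the monotonicity argument excluding the other terminals is sound, and the $\varepsilon$-contraction handles the boundary terminals $A_k,A_{k+1}$. But the step you yourself flag as ``the main obstacle'' is the entire content of the lemma, and your proposal does not supply it. Because both of your wedges share the single interior vertex $V_k^*$, their union meets the trapezoid $T_k$ only in its top and bottom triangles; the left and right triangles (openings $<\pi/3$ at $V_k^*$, with the terminals $A_{k+1},B_{k+1}$ resp.\ $A_k,B_k$ at their corners) are completely uncontrolled: Lemma~\ref{lm:473988} does not apply to them, and the tree genuinely enters them --- the edges arriving at $A_k$ and $B_k$ do. Nothing you have proved prevents a path of $\St$ from going right triangle $\to$ top triangle $\to$ right triangle $\to$ bottom triangle $\to$ left triangle; having a single segment in each of the top and bottom wedges is perfectly compatible with such a zigzag. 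Your items (i)--(iii) are a plan, not an argument, and the kind of cut-and-shortcut analysis near $A_k,B_k$ that it would require is essentially what the paper performs later (Lemma~\ref{lm:parallelogrammes}) \emph{after} the present lemma is already available.

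The paper closes exactly this gap by a different placement of the wedges: the vertex $W_1$ is taken \emph{on the side} $[B_kB_{k+1}]$ of the angle, at the geometric-mean distance $\lambda^{k-1/2}$ from $A_\infty$ (the point of that side maximizing the viewing angle of $[A_kA_{k+1}]$), and condition~\eqref{eq:438466} is precisely what makes $\angle A_kW_1A_{k+1}>2\pi/3$ there; $W_2$ is its reflection on the other side. Since $W_1$ and $W_2$ lie on $\partial\conv\mathcal{A}$ and each $2\pi/3$-wedge can be chosen to contain the chord $[W_1W_2]$, every crossing of $\St$ over this separating chord lies in both wedges at once, so the single segment of $\mathcal W_1$ and the single segment of $\mathcal W_2$ are forced to line up into one crossing segment, and $\mathbb R^2\setminus(\mathcal W_1\cup\mathcal W_2)$ already has the two convex components the statement asks for. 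If you want to salvage your approach, move the wedge vertices to the two sides of the angle; with the vertex at the interior point $V_k^*$ the separation argument cannot be completed without substantial additional work.
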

\begin{proof}
Let us show that the assumptions of the lemma imply, for all $k\in\mathbb N$, the existence of points $W_1 \in [B_kB_{k+1}]$ and $W_2 \in [A_kA_{k+1}]$ such that the two angles $\angle A_kW_1A_{k+1}$ and $\angle B_kW_2B_{k+1}$ have size at least $2\pi/3$ (see Fig.~\ref{fig:V1}).
Let $W_1$ be the point in $[B_k,B_{k+1}]$ such that $\triangle A_k A_\infty W_1$ is similar to $\triangle W_1 A_\infty A_{k+1}$, 
which is equivalent to say that $|A_\infty A_k| \cdot |A_\infty A_{k+1}| = |A_\infty W_1|^2$. 
So $|A_\infty W_1| = \lambda^{k-1/2}$. Let $2\gamma := \angle A_kW_1A_{k+1}$.
By the sine rule for $\triangle A_\infty A_k W_1$
\[
\frac{\sqrt{\lambda}}{\sin \angle A_\infty A_k W_1} = \frac{1}{\sin \angle A_\infty W_1 A_k}.
\]
Notice that since $\angle A_\infty A_k W_1=\angle A_\infty W_1 A_{k+1}$ we have $\pi = 2\alpha + 2\gamma + 2\angle A_\infty A_k W_1$ hence 
$\angle A_\infty A_k W_1 = \frac{\pi} 2 - \alpha - \gamma$ and 
$\angle A_\infty W_1 A_k = 2\gamma + \angle A_\infty W_1 A_{k+1} = 
2\gamma + \frac \pi 2 - \alpha - \gamma = \frac \pi 2 -\alpha + \gamma$.
Thus
\[
\sqrt{\lambda} = \frac{\cos (\alpha + \gamma)}{\cos (\alpha - \gamma)}.
\]
The right-hand side is decreasing in $\gamma$ since
\[
\frac{d}{d \gamma} \frac{\cos (\alpha + \gamma)}{ \cos (\alpha - \gamma)} 
= - \frac{\sin 2\alpha}{\cos^2(\alpha - \gamma)}.
\]
Hence~\eqref{eq:438466} implies $\gamma > \frac{\pi}{3}$ and $\angle A_kW_1A_{k+1} > \frac{2\pi}{3}$. Define $W_2$ as the reflection of $W_1$ with respect to the angle bisector.

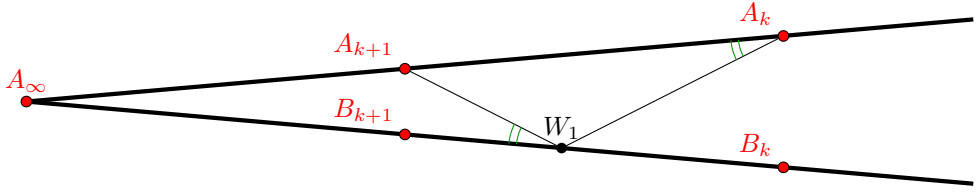
\begin{figure}[h]
    \centering
    \begin{tikzpicture}[scale=10,rotate=-5]

\definecolor{green}{rgb}{0.0,0.6,0.0}

\coordinate [label={[red,above]:$A_\infty$}] (ab) at (0,0);

\draw [ultra thick] (ab) --++ (10:0.5) coordinate [label={[red,above left]:$A_{k+1}$}] (a2) --++ (10:0.5) coordinate [label={[red,above left]:$A_{k}$}] (a1) --++ (10:0.25);

\draw [ultra thick] (ab) --++ (0:0.5) coordinate [label={[red,above left]:$B_{k+1}$}] (b2) --++ (0:0.5) coordinate [label={[red,above left]:$B_{k}$}] (b1) --++ (0:0.25);

\coordinate [label={[above]:$W_{1}$}] (c) at (0.707,0);

\draw (a2) -- (c) -- (a1);

\draw pic[mydeco,draw=green,angle radius=0.6cm] {angle={a2--c--ab}};
\draw pic[mydeco,draw=green,angle radius=0.7cm] {angle={a2--c--ab}};

\draw pic[mydeco,draw=green,angle radius=0.6cm] {angle={ab--a1--c}};
\draw pic[mydeco,draw=green,angle radius=0.7cm] {angle={ab--a1--c}};


\foreach \n in {1,2} {
    \draw [fill=red] (a\n) circle (0.2pt);
    \draw [fill=red] (b\n) circle (0.2pt);
  }
\draw [fill=red] (ab) circle (0.2pt);
\fill  (c) circle (0.2pt);

\end{tikzpicture}
    \caption{Definition of the point $W_1$ in Lemma~\ref{lm:321066}}
    \label{fig:V1}
\end{figure}

Now take $\mathcal{W}_1$ and $\mathcal{W}_2$ to be any angles of size $2\pi/3$ with vertices $W_1$ and $W_2$ contained in $\angle A_kW_1A_{k+1}$ and $\angle B_kW_2B_{k+1}$, respectively, so that $\mathcal{W}_1, \mathcal{W}_2$ have empty intersection with the terminal set.
Lemma~\ref{lm:473988} assures that each of the two angles $\mathcal{W}_1$ and $\mathcal{W}_2$ contains a single line segment of $\St$: denote this segments respectively by $S_1$ and $S_2$. Since $\St \subset \conv \mathcal{A}$ and the segment $[W_1W_2]$ splits $\conv \mathcal{A}$ into two parts, both segments $S_1$ and $S_2$ must intersect $[W_1W_2]$ and hence $S_1\cup S_2$ must be itself a single line segment.
So the statement is proven by taking $\mathcal C_k$ and $\mathcal D_k$ to be the connected components of $\mathbb{R}^2 \setminus (\mathcal{W}_1 \cup \mathcal{W}_2)$.
\end{proof}

\begin{lemma} \label{lm:parallelogrammes}
     Suppose that $\alpha < \frac{\pi}{6}$ and $\lambda\le \frac 1 2$ satisfy \eqref{eq:438466}.
     \begin{itemize}
         \item[(i)] Any full* Steiner tree $\St_1$ for $\mathcal A_1$ is uniquely determined by its wind-rose.
         \item[(ii)] Any Steiner set for $\mathcal A_0$ is a tree.
         \item[(iii)] Any full* Steiner tree $\St_0$ for $\mathcal A_0$ is uniquely determined by $\St_0 \cap [A_0B_0]$, which is always a point.
         \item[(iv)] Let $\St = \St_1$ or $\St = \St_0$. Then for every $k \in \mathbb{N}$ there is a unique parallelogram $\mathcal{P}_k$ whose sides are parallel to the wind-rose of $\St$. 
         \item[(v)] Every full finite component of a full* Steiner tree $\St_1$ for $\mathcal A_1$ contains an odd number of terminals. 
     \end{itemize}
\end{lemma}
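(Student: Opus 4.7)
I plan to treat the five parts using three main tools: the structural decomposition Lemma~\ref{lm:321066}, the convexity/rigidity Theorem~\ref{th:uniq}, and Maxwell's length formula (Lemma~\ref{lm:Maxwell}). I would address them in the order (i), (iii), (iv), (v), (ii), since (iii) reduces to (i), (v) is needed nowhere else, and (ii) uses the structural information established in (i)--(iv).

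For (i), in any full* tree every edge is parallel to one of the three lines of the wind-rose. Knowing the wind-rose I reconstruct $\St_1$ inductively in $k$: Lemma~\ref{lm:321066} provides a single segment between $\mathcal C_k$ and $\mathcal D_k$ whose direction is in the wind-rose, and combined with the (also wind-rose) edge directions leaving $A_k$ and $B_k$, the next Steiner point is pinned down as the intersection of two wind-rose lines. Equivalently, once the abstract combinatorial tree is identified, Theorem~\ref{th:uniq} forces two locally minimal immersions with the same wind-rose and the same terminal positions to coincide. For (iii) I reduce to (i). First, $\St_0\cap[A_0B_0]=\{x\}$ is a single point: a sub-arc in the intersection would produce a degree-two point on $[A_0B_0]$ whose two incident edges lie along the segment, so the tree could be shortened by bending the other incident edge; two distinct intersection points would create a loop in $\overline{\St_0\cup[A_0B_0]}$, contradicting the acyclic property of a Steiner set (Theorem~\ref{th:PaoSte}). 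The edge of $\St_0$ leaving $x$ into the angle must then be perpendicular to $[A_0B_0]$ by first-order optimality at a free endpoint on a terminal segment; this pins down the wind-rose, after which (i) concludes.

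For (iv) I apply Lemma~\ref{lm:321066} at the two consecutive levels $k$ and $k+1$ to obtain two uniquely determined wind-rose segments of $\St$; together with the edges at $A_k$ and $B_k$ (whose directions are also in the wind-rose) they bound a closed quadrilateral whose sides are all parallel to wind-rose lines. Since the wind-rose has only three directions, two opposite sides must be parallel, so the region is a parallelogram $\mathcal P_k$, automatically unique given the four edge directions. For (v) I propose a bipartite-labeling argument: the six wind-rose unit vectors split into a ``positive'' and a ``negative'' equilateral triple (each summing to zero); each Steiner point emits one full triple, and along any edge the two endpoints emit opposite triples, inducing a bipartition of the full component. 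Terminals on the upper ray can only emit edges pointing into the lower half-plane, and terminals on the lower ray only into the upper half-plane, so each side of the angle is constrained to a specific subset of the six wind-rose directions. Combining this geometric restriction with the degree-count identity $3(P_S - N_S) = N_T - P_T$ extracted from the bipartition, one extracts the parity of $n_A + n_B$ and shows it is odd.

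Finally, for (ii), suppose by contradiction that a Steiner set $\St$ for $\mathcal A_0$ has at least two connected components. Since $[A_0B_0]$ is the only non-trivial component of $\mathcal A_0$, each component of $\St$ must touch $[A_0B_0]$, otherwise the isolated terminals or $A_\infty$ it carries would be separated from $[A_0B_0]$ in $\St\cup\mathcal A_0$. Apply Lemma~\ref{lm:321066} at $k=1$: the unique segment of $\St$ between $\mathcal C_1$ and $\mathcal D_1$ lies in a single component, say $\St_a$. Any other component $\St_b$ is therefore trapped inside $\overline{\mathcal C_1}$ (it meets $[A_0B_0]\subset\mathcal C_1$) and can only reach terminals among $A_1,B_1$. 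A surgery that slides an endpoint of $\St_b$ along $[A_0B_0]$ onto $\St_a$ yields a strictly shorter connected competitor, contradicting minimality. The main obstacle I expect is part (v): the parity bookkeeping requires a careful case analysis of the wind-rose orientation with respect to the bisector of the angle and of which of the two triples each terminal can belong to; the other parts follow fairly mechanically from the structural lemma and Theorem~\ref{th:uniq}.
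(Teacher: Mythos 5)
Your items (i) and (iii) are essentially the paper's induction ($S_k$ determines $T_{k+1}$ determines $S_{k+1}$), but the other three parts have genuine gaps. The clearest one is (v): the bipartite identity $3(P_S-N_S)=N_T-P_T$ carries no parity information, because modulo $2$ it reads $T\equiv N_T-P_T\equiv P_S-N_S\equiv S$, which is already forced by the universal relation $S=T-2$ in a full tree --- the argument is circular. Nor does your geometric restriction rescue it: at a terminal the single edge must point into $\conv\mathcal A$, and the inward directions at a point of the upper ray are one per wind-rose line, hence a mixture of the two triples; in the actual $5$-terminal solution $B_1$ and $B_2$ receive edges from opposite triples, so terminals on one side of the angle do not share a class. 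The paper's argument for (v) is entirely different and local: the path $P_k$ from $A_k$ to $B_k$ lies in $\mathcal P_k$ and cannot equal the segment $[A_kB_k]$ (that direction is not in the wind-rose), so $\{S_k,T_k\}\neq\{A_k,B_k\}$; a full component therefore collects both terminals of every level it traverses and stops at exactly one degree-two terminal, giving $2k+1$.

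Two further problems. In (ii) your opening claim --- that every component of a Steiner set for $\mathcal A_0$ must touch $[A_0B_0]$, else its terminals would be disconnected --- is false: distinct components may be glued through a shared isolated terminal, which is exactly how the (disconnected) minimizer for $\mathcal A_1$ in Theorem~\ref{theo:main2} is built, so your surgery never gets started. The paper instead argues quantitatively: a disconnected configuration forces at least two pieces stretching from $[A_0B_0]$ to $\mathcal A_1$ (one of them all the way to $A_\infty$), whose total length exceeds that of the explicit competitor by inequality~\eqref{eq:appendix}. In (iv) you miss the actual content, which is uniqueness: there are a priori three parallelograms with opposite vertices $A_k,B_k$ and sides on wind-rose lines, and your pigeonhole step only produces a trapezoid in any case (the repeated direction need not occur on opposite sides). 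The paper excludes two of the three candidates by a Melzak reduction at $A_1,B_1$, showing one wind-rose direction makes an angle smaller than $\alpha$ with the bisector, so that at $A_k$ or $B_k$ only two directions point into $\conv\mathcal A$. Relatedly, in (iii) the wind-rose of $\St_0$ is the same for both solutions $\St_0^1,\St_0^2$, so ``pin down the wind-rose and invoke (i)'' cannot suffice; the induction must be seeded by the point $x$ itself.
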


\begin{proof}
    Let us start with item~(iv).
    In the case of the set $\St = \St_0$ notice that the edge of $\St_0$ touching the segment 
    $[A_0B_0]$ must be parallel to the bisector and hence cannot be used 
    to construct a parallelogram $\mathcal{P}_k$. The parallelogram is hence uniquely defined by the other two directions of the wind rose.
    In the case of $\St = \St_1$ the proof is more involved. 
    Let $\mathcal C_1$ be the convex set given by 
    Lemma~\ref{lm:321066} applied to $\St_1$ with $k = 1$. 
    The Lemma assures that $A_1$ and $B_1$ are the only terminals in $\mathcal{C}_1$ and that $\St_1 \cap \partial \mathcal{C}_1$ is a point. Since $\St_1 \subset \conv \mathcal{A}_1$  (see Section~\ref{sec:steiner}), points $A_1$ and $B_1$ have degree one in $\St_1$, so $\St_1 \cap \mathcal{C}_1$ is a tripod. Thus $A_1$ and $B_1$ are adjacent to the same Steiner point, let us call it $S_1$. Consider the Melzak point $C$ for $A_1$ and $B_1$; clearly $C$ lies in the bisector. Put 
    \[
    \mathcal{A}' = \mathcal{A}_1 \cup \{C\} \setminus \{A_1,B_1\}.
    \]
    By Lemma~\ref{lm:Melzak} applied with $p_1 = A_1$, $p_2 = B_1$ and $q = C$ a full* tree $\St_1$ corresponds to a full* tree $\St'$ for $\mathcal A'$ with the same wind-rose.

    Since $\St' \subset \conv \mathcal{A}'$, the segment of $\St'$ incident to $C$ belongs to the angle $A_2CB_2$. 
    Note that $\angle A_2CB_2 < 2\alpha = \angle A_2A_\infty B_2$ because $|A_\infty A_2| = |A_\infty B_2| < |A_2 C| = |B_2C|$ since $\lambda \le \frac 1 2$. 
    Thus the wind-rose of $\St'$ contains a direction defining an angle smaller than $\alpha$ with the bisector. 
    This means that either $A_k$ or $B_k$ has only two directions from the wind-rose which go inside $\conv \mathcal{A}$ and thus $\mathcal{P}_k$ is uniquely defined for $\St_1$.

Denote the vertices of $\mathcal{P}_k$ by $A_kV_kB_kU_k$ for $k \geq 1$, where $V_k \in \conv \{A_\infty,A_k,B_k\}$. 
    We claim that the parallelograms $\mathcal{P}_k$ are disjoint. 
    Call $I$ the intersection of the segments $[A_1B_2]$ and $[A_2B_1]$. By symmetry $I$ belongs to the bisector.
    Then~\eqref{eq:438466} implies $\angle A_1IA_2 > \angle A_1W_1A_2 \geq 2\pi/3$, where $W_1$ is defined in the proof of Lemma~\ref{lm:321066}.
    So $\angle A_1IB_1 = \angle A_2IB_2 = \pi - \angle A_1IA_2 < \pi/3$. 
    Thus the domains $\{X \, | \, \angle A_1XB_1 \geq 2\pi/3 \} \supset \mathcal{P}_1$ and 
    $\{Y \, | \, \angle A_2YB_2 \geq 2\pi/3 \} \supset \mathcal{P}_2$ are separated by the perpendicular to the bisector at $I$ and hence $\mathcal{P}_1$ and $\mathcal{P}_2$ are disjoint. By similarity this implies that all parallelograms $\mathcal P_k$ are disjoint.
    
    Now we are going to prove that there is a single edge in $\St_0$ which has a point on the segment $[A_0B_0]$ which implies item~(ii). Assume the contrary, then $\St_0 \setminus [A_0B_0]$ has at least 2 components. One of them contains $A_\infty$, so its length is at least 
    \[
    \frac{\cos\alpha}{\lambda} - \frac{\sin\alpha}{\sqrt{3}\lambda}.
    \]
    Also, the length of every component cannot be smaller than the distance between $[A_0B_0]$ and $\mathcal{A}_1$ which is
    \[
    \frac{\cos\alpha}{\lambda} - \frac{\sin\alpha}{\sqrt{3}\lambda} - \cos \alpha.
    \]
    Summing up
    \[
    \H(\St_0) \geq 2\frac{\cos\alpha}{\lambda} - 2\frac{\sin\alpha}{\sqrt{3}\lambda} - \cos \alpha.
    \]
    Under the assumption~\eqref{eq:438466} this is greater than the length of the example we have\footnote{Formally we have a logical loop here, but the calculation of length in a given example is independent with any logical arguments.}, whose length is 
    \[
    \frac{\cos\alpha}{\lambda} - \frac{\sin\alpha}{\sqrt{3}\lambda} + \frac{\sqrt{3}}{1 - \lambda} \sin \alpha;
    \]
    this is equivalent to
    \begin{equation}\label{eq:appendix}
    \frac{\cos\alpha}{\lambda} - \frac{\sin\alpha}{\sqrt{3}\lambda} - \cos \alpha \geq  \frac{\sqrt{3}}{1 - \lambda} \sin \alpha
    \end{equation}
    which is shown in the end of the Appendix.

    Let us prove items~(i) and~(iii). Recall that $S_1$ is the Steiner point adjacent to $A_1$ and $B_1$ in $\St_1$, and also $S_1$ is a vertex of $\mathcal{P}_1$ (both for $\St_0$ and $\St_1$).

    Now consider, in $\St_j$, the simple path $P_k$ from $A_k$ to $B_k$ for $k > 1$ (see Fig.~\ref{pict:paralelograms}).     
    The number of connected components in $\St_j \setminus P_k$ is equal to the number of branching points in $P_k$ plus the number of terminals of degree 2 in 
    $\{A_k,B_k\}$. By Lemma~\ref{lm:321066} the path $P_k$ contains exactly two points from $\overline{\St_j \setminus P_k}$ (such points are either branching or terminal). We call these points $S_k$, $T_k$, in a way that $S_k \in [A_kV_k] \cup [V_kB_k]$ and $T_k \in [A_kU_k] \cup [U_kB_k]$.
    The corresponding edges have opposite directions in $\St_j \setminus P_k$. Thus $P_k \subset \mathcal{P}_k$.

    Reasoning by induction on $k$ the point $S_k$ determines $T_{k+1}$ and $T_{k+1}$ determines $S_{k+1}$. This proves items~(i) and~(iii).

\begin{figure}[ht]
    \centering
    \begin{tikzpicture}[scale=10,rotate=-5]

\coordinate [label={[red,above]:$A_\infty$}] (ab) at (0,0);

\draw [ultra thick] (ab) --++ (10:0.25) coordinate  (a3) --++ (10:0.25) coordinate [label={[red,above left]:$A_{k+1}$}] (a2)
--++ (10:0.5) coordinate [label={[red,above left]:$A_{k}$}] (a1) --++ (10:0.5) coordinate (a0);

\draw [ultra thick] (ab) --++ (0:0.25) coordinate  (b3) --++ (0:0.25) coordinate [label={[red,below left]:$B_{k+1}$}] (b2)
--++ (0:0.5) coordinate [label={[red,below left]:$B_{k}$}] (b1) --++ (0:0.5) coordinate  (b0);


\coordinate (y0) at ($(a0)!+0.7222!(b0)$); 

\draw[white] (y0) --++ (185:0.4805) coordinate (y1) --++ (245:0.03);
\draw (b1) --++ (65:0.09) coordinate (v) -- (a1) --++ (245:0.098) coordinate (u) -- (b1);

\draw[ultra thick, blue] (y1) --++ (125:0.102) coordinate (y2) --++ (65:0.06);
\draw[ultra thick, blue] (y2) --++ (185:0.157);
\draw[ultra thick, blue] (y1) -- (b1);
\draw[ultra thick, blue] (y1) -- ($(y1)!+0.222!(y0)$);



\foreach \n in {1,2} {
    \draw [fill=red] (a\n) circle (0.2pt);
    \draw [fill=red] (b\n) circle (0.2pt);
  }
\draw [fill=red] (ab) circle (0.2pt);  
\draw [fill=blue] (y2) circle (0.2pt) node[right]{$S_k$};
\draw [fill=blue] (y1) circle (0.2pt) node[left]{$T_k$};
\draw [fill=] (v) circle (0.2pt) node[right]{$U_k$};
\draw [fill=] (u) circle (0.2pt) node[left]{$V_k$};

\end{tikzpicture}
    \caption{Behaviour of $\St_1$ and $\St_0$ around the terminals $A_k$ and $B_k$.}
    \label{pict:paralelograms}
\end{figure}
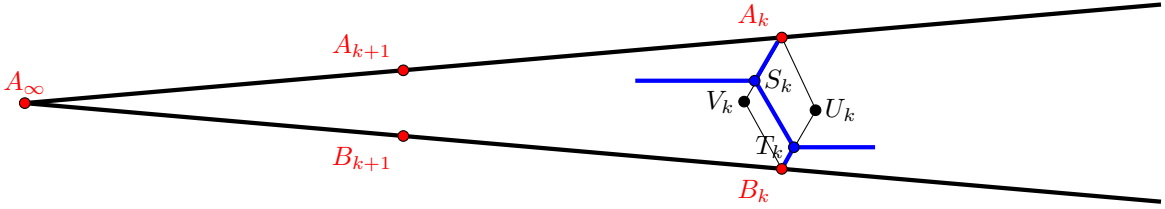

Now we proceed with item~(v). Note that in the construction we cannot have $\{S_k,T_k\} = \{A_k,B_k\}$, because otherwise we would have $P_k = [A_kB_k]$, but this edge has a direction which is not represented in the wind-rose. 
Thus the full component $\St_1'$ of $\St_1$ containing $A_1$ has an odd (or an infinite) number of terminals. Note that the terminal set of $\overline{\St_1 \setminus \St_1'}$ is similar to $\mathcal{A}$ with a scale factor $\lambda^k$, so a reduction implies item~(iii).

\end{proof}

\section{Proof of Theorem~\ref{theo:main2}} \label{sec:proof1}

\begin{lemma} \label{lm:fullstar}
     If $\alpha < \frac{\pi}{6}$, and $\lambda$ satisfy~\eqref{eq:438466}, then there exists a full* Steiner tree for $\mathcal{A}_1$.
\end{lemma}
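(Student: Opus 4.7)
The plan is to exhibit an explicit candidate $\St_1^*$ as the union of countably many rescaled copies of a single $5$-terminal full Steiner tree, verify that this candidate is full* via a combinatorial ``turning-angle'' argument, and then conclude that it is a Steiner tree.

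First I would produce the full Steiner tree $T$ on $\{A_1,A_2,A_3,B_1,B_2\}$ with the combinatorics of Figure~\ref{pict:mainth2}: three Steiner points $y_0,y_1,y_2$ with $y_0$ adjacent to $\{A_1,B_1,y_1\}$, $y_1$ adjacent to $\{y_0,B_2,y_2\}$, and $y_2$ adjacent to $\{y_1,A_2,A_3\}$. The tree $T$ is obtained by three iterations of Melzak's reduction (Lemma~\ref{lm:Melzak}) applied to the pairs $(A_1,B_1)$, $(M_1,B_2)$ and $(M_2,A_2)$, leaving the straight segment $[M_3,A_3]$ of length $\H(T)$. One checks that $\alpha<\pi/6$ together with~\eqref{eq:438466} places each iterated Melzak point $M_i$ on the correct side of the relevant line so that $T$ is embedded in $\conv\{A_1,\ldots,B_2\}$. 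Letting $h$ be the homothety centred at $A_\infty$ with ratio $\lambda^2$ and $T_k:=h^k(T)$, the set $\St_1^*:=\bigcup_{k\ge 0}T_k$ is a tree (consecutive pieces meet only at $A_{2k+3}$) whose closure contains $\mathcal A_1$, with finite total length $\H(T)/(1-\lambda^2)$.

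The crucial step is the verification that $\St_1^*$ is full*. Inside each $T_k$ the Steiner-point angles are $2\pi/3$ by construction, so what must be checked is that at every shared terminal $A_{2k+3}$ the two edges coming from $T_k$ and $T_{k+1}$ meet at a convex angle of exactly $2\pi/3$, and by self-similarity it suffices to do so at $A_3$. Walking along the ``main chain'' $A_1\to y_0\to y_1\to y_2\to A_3$ in $T$, at each Steiner point the direction of travel turns by $\pm\pi/3$ with the sign opposite to the side on which the branch leaves. Since the branches $B_1,B_2$ lie below the angle bisector and $A_2$ lies above it, the three turns are $-\pi/3,-\pi/3,+\pi/3$ with total $-\pi/3$, so the direction from $y_2$ to $A_3$ is the rotation by $-\pi/3$ of the direction from $A_1$ to $y_0$; reversing it shows that the outward direction at $A_3$ inside $T$ is the outward direction at $A_1$ inside $T$ rotated by $2\pi/3$. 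Since $h$ preserves directions, the outward direction at $A_3$ inside $T_1$ coincides with the outward direction at $A_1$ inside $T$, so the two outward directions at the shared terminal $A_3$ differ exactly by $2\pi/3$. This careful sign bookkeeping is what I expect to be the main obstacle of the proof; once it is in place the full* property is immediate.

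Finally, to show that $\St_1^*$ is actually a Steiner tree I would invoke Remark~\ref{rem:exitsandgood} to pick any Steiner tree $\St$ for $\mathcal A_1$ and compare lengths. The inequality $\H(\St)\le\H(\St_1^*)$ is automatic; for the converse, apply Lemma~\ref{lm:321066} to slice $\St$ into a countable family of slabs joined by single separator segments, and bound the length of each slab from below by the length of the corresponding ``elementary'' piece of $\St_1^*$, a finite subtree connecting the same finite collection of terminals. Summing yields $\H(\St)\ge\H(\St_1^*)$, hence equality, and $\St_1^*$ is a full* Steiner tree, proving the lemma.
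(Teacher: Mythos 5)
Your construction of the candidate $\St_1^*$ and the turning-angle check that it is full* are plausible (the candidate is indeed the tree that ultimately appears in Theorem~\ref{theo:main2}), but the final step --- proving that $\St_1^*$ is a \emph{global} minimizer --- contains a genuine gap, and that step, not the sign bookkeeping, is the real obstacle. When you slice an arbitrary Steiner tree $\St$ using Lemma~\ref{lm:321066}, the piece of $\St$ in the slab between two consecutive separators is a tree connecting $A_k$, $B_k$ \emph{and the two points where $\St$ happens to cross the separators}; the corresponding piece of $\St_1^*$ connects $A_k$, $B_k$ and two \emph{different} crossing points. There is no general inequality between Steiner lengths for these two different four-point boundary data, so the claimed slab-by-slab bound $\H(\St)\ge\H(\St_1^*)$ does not follow; one would also need to apportion the lengths of the separator segments themselves, which depend on where they sit. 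This is precisely the difficulty the paper flags in Remark~\ref{rem:localminimas}: local comparisons are insufficient, and the optimality of this particular candidate is the content of all of Section~\ref{sec:proof1} (Maxwell's formula, the reduction of the length to a set $J\subset\mathbb N$, and the minimization of $|a-b|$). In effect, your plan for proving Lemma~\ref{lm:fullstar} presupposes Theorem~\ref{theo:main2}.

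The paper's proof is much softer and needs neither an explicit candidate nor a lower bound. It takes \emph{any} Steiner tree $\St_1$ (which exists) and considers the full component $\St'$ containing $A_1$. If $\St'$ is infinite it must be all of $\St_1$, which is then full. If $\St'$ is finite, its largest-index terminal is some $A_{k+1}$, and the self-similarity $f_k(\mathcal A_1)=\mathcal A_{k+1}$ forces $\H(\St')=(1-\lambda^k)\H(\St_1)$ by an exchange argument; then $\St^*=\bigcup_{j\ge 0}f_k^j(\St')$ connects $\mathcal A_1$, has length exactly $\H(\St_1)$, hence is itself a minimizer, and is full* by construction. If you want to keep a constructive flavour, you should at least replace the slab comparison by this exchange/self-similarity argument, or postpone the optimality of $\St_1^*$ until the length formula of Lemma~\ref{lm:generalcomplex} is available.
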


\begin{proof}
    Let $\St_1$ be a solution to the Steiner problem for $\mathcal{A}_1$. Let $\St'$ be the full component of $\St_1$ containing $A_1$. If $\St'$ is infinite then it contains every segment between $\mathcal{C}_k$ and $\mathcal{D}_k$, defined in  Lemma~\ref{lm:321066}, so every other full component should be a segment $[A_kB_k]$ which is impossible. This means $\St' = \St_1$ and we are done.
    
    If $\St'$ is finite, then by Lemma~\ref{lm:parallelogrammes}(v) $\St'$ has an odd number of terminals and without loss of generality the largest index of a terminal of $\St'$ is reached on $A_{k+1}$.
    Then $\mathcal{A}_{k+1} := \mathcal{A} \setminus \{A_1,\dots, A_k\} \setminus \{B_1,\dots, B_k\}$ is similar to $\mathcal{A}_1$, i.e.{}
    \[
    f_k(\mathcal{A}_1) = \mathcal{A}_{k+1},
    \]
    where $f_k$ is the homothety with center $A_\infty$ and ratio $\lambda^k$.
    So 
    \[
    \H(\St_{k+1}) = \lambda^k \H(\St_1),
    \]
    where $\St_{k+1}$ is a Steiner tree for $\mathcal{A}_{k+1}$ and
    \[
    \H (\St') = (1-\lambda^k) \H (\St_1).
    \]
    Let $\St^*$ be the union of $f_k^j (\St')$ over non-negative integers $j$. Note that $\St^*$ connects $\mathcal{A}_1$ and 
    \[
    \H (\St^*) = \sum_{j=0}^\infty \lambda^{kj}\H (\St') = \frac{1}{1-\lambda^k}\H (\St') = \H (\St_1).
    \]
    Thus $\St^*$ is a solution to the Steiner problem for $\mathcal{A}_1$. 
    By construction it is full*, so the lemma is proved.
\end{proof}

\begin{lemma} \label{lm:generalcomplex}
    The length of a full* Steiner tree for $\mathcal A_1$ is equal to
    \[
    e^{-\beta i} \left ( \cos \alpha + \sqrt{3} \sin \alpha + a e^{\frac{\pi i}{6}} + b e^{-\frac{\pi i}{6}}\right ) = \left | \cos \alpha + \sqrt{3} \sin \alpha + a e^{\frac{\pi i}{6}} + b e^{-\frac{\pi i}{6}}\right |,
    \]
    where $a = 2\sin \alpha \sum_{j \in J} \lambda^j$, $b = 2\sin \alpha \sum_{j \in \mathbb{N} \setminus J} \lambda^j$ for some $J \subset \mathbb{N}$.
\end{lemma}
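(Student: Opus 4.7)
The plan is to apply Maxwell's length formula (Lemma~\ref{lm:Maxwell}) to a full* Steiner tree $\St_1$ for $\mathcal A_1$. Placing $A_\infty$ at the origin, the terminals are $A_\infty = 0$ and $A_k = \lambda^{k-1} e^{i\alpha}$, $B_k = \lambda^{k-1} e^{-i\alpha}$ for $k \ge 1$, so $\sum_k (|A_k| + |B_k|) = 2\sum_k \lambda^{k-1}$ converges absolutely and the remark following Lemma~\ref{lm:Maxwell} permits passing to the limit. By Lemma~\ref{lm:parallelogrammes}(iv) the tree has a wind-rose consisting of three lines pairwise at angle $\pi/3$; let $\beta$ be a rotation angle for which these lines become the lines of slope $0$, $\pi/3$ and $2\pi/3$, so the six oriented wind-rose unit vectors may be written as $u_j := e^{ij\pi/3}$ for $j = 0, \dots, 5$. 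Each outer direction or tripod-completion direction then takes the form $c_k = e^{i\beta} u_{j_k}$, and Maxwell gives $\H(\St_1) = \sum_k \bar{c_k}\, p_k = e^{-i\beta}\sum_k \bar{u_{j_k}}\, p_k$; since $\H(\St_1)$ is a positive real number, the sum $\sum_k \bar{u_{j_k}}\, p_k$ equals $e^{i\beta}\H(\St_1)$ and has modulus $\H(\St_1)$. It thus suffices to identify the $j_k$ and to compute the complex sum.

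For $k=1$, the proof of Lemma~\ref{lm:parallelogrammes}(iv) shows that $A_1$ and $B_1$ are both adjacent to a single Steiner point $S_1$ forming a regular tripod. Adjusting $\beta$ by a multiple of $\pi/3$ if necessary, I may assume the tripod at $S_1$ uses the three ``odd'' wind-rose vectors $\{u_1, u_3, u_5\}$. Since $A_1$ lies upper-right of $S_1$ and $B_1$ lies lower-right (the only configuration compatible with $S_1$ being between $A_\infty$ and the midpoint of $[A_1, B_1]$), the legs of the tripod point to $A_1$, $B_1$ and the third neighbour in the directions $u_1$, $u_5$ and $u_3$ respectively, so $c_1^A = u_1$ and $c_1^B = u_5$. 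The pair $(A_1, B_1)$ therefore contributes
\[
\bar{u_1}\, e^{i\alpha} + \bar{u_5}\, e^{-i\alpha}
= e^{i(\alpha - \pi/3)} + e^{-i(\alpha - \pi/3)}
= 2\cos\!\left(\alpha - \tfrac{\pi}{3}\right)
= \cos\alpha + \sqrt{3}\sin\alpha,
\]
which matches the first term of the claimed formula.

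For $k \ge 2$, the two sides of the parallelogram $\mathcal P_k$ incident to $A_k$ point in the wind-rose directions $u_4$ and $u_5$ (toward the corners $U_k$ and $V_k$), so the edge at $A_k$ must go in one of these two directions, giving $c_k^A \in \{u_1, u_2\}$; similarly $c_k^B \in \{u_4, u_5\}$. The two ``convergent'' combinations $(u_1, u_5)$ and $(u_2, u_4)$ would force both edges at $A_k, B_k$ to terminate at a single corner of $\mathcal P_k$, leaving only one intermediate branching point along the simple path $P_k$ from $A_k$ to $B_k$; this contradicts the two-point structure established for $k > 1$ in the proof of Lemma~\ref{lm:parallelogrammes}. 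Hence $(c_k^A, c_k^B) \in \{(u_1, u_4), (u_2, u_5)\}$, and in both cases $c_k^A + c_k^B = 0$, so
\[
\lambda^{k-1}\!\left(\bar{c_k^A}\, e^{i\alpha} + \bar{c_k^B}\, e^{-i\alpha}\right)
= \lambda^{k-1}\,\bar{c_k^A}\,(e^{i\alpha} - e^{-i\alpha})
= 2i\sin\alpha \cdot \lambda^{k-1}\, \bar{c_k^A}.
\]
Using $i \bar{u_1} = e^{i\pi/6}$ and $i \bar{u_2} = e^{-i\pi/6}$, this contribution equals $2\sin\alpha\,\lambda^{k-1}\, e^{i\pi/6}$ in the first case and $2\sin\alpha\,\lambda^{k-1}\, e^{-i\pi/6}$ in the second.

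Letting $J \subset \mathbb N$ be the set of integers $j = k - 1 \ge 1$ for which the first case occurs, the canonical sum is
\[
\cos\alpha + \sqrt{3}\sin\alpha + a\, e^{i\pi/6} + b\, e^{-i\pi/6},
\qquad a = 2\sin\alpha\!\sum_{j \in J}\lambda^j,\ b = 2\sin\alpha\!\sum_{j \in \mathbb N\setminus J}\lambda^j,
\]
and taking moduli together with the identity from the first paragraph yields the lemma. The main obstacle will be the combinatorial step in the third paragraph, namely the reduction from four a priori options for $(c_k^A, c_k^B)$ to the two admissible ones: this is exactly what the path-in-parallelogram analysis of Lemma~\ref{lm:parallelogrammes}(iv) is designed to supply, and once it is available the remainder of the argument is a clean complex-number computation.
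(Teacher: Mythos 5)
Your proof is correct and follows essentially the same route as the paper: Maxwell's formula combined with the wind-rose/parallelogram structure from Lemma~\ref{lm:parallelogrammes}, with the opposite-direction property at $A_k,B_k$ ($k\ge 2$) yielding the two possible summands $2\sin\alpha\,\lambda^{k-1}e^{\pm i\pi/6}$. The only cosmetic difference is that you compute the $A_1,B_1$ contribution $\cos\alpha+\sqrt3\sin\alpha$ directly from the tripod at $S_1$, whereas the paper obtains the same term via the Melzak reduction to the point $C$.
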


\begin{proof}
    Consider a complex coordinate system, determined by $A_\infty = 0$, $A_1 = \cos \alpha  + i\sin \alpha$, $B_1 = \cos \alpha  - i\sin \alpha$. Let $C$ be a Melzak point of $A_1$ and $B_1$; clearly, $C = \cos \alpha + \sqrt{3} \sin \alpha$ and
    \[
    A_k = \lambda^{k-1} (\cos \alpha  + i\sin \alpha), \quad \quad  B_k = \lambda^{k-1} (\cos \alpha - i\sin \alpha).
    \]
    Denote by $\beta$ the angle smallest angle between an edge of the tree and the real axis; by the argument in the proof of Lemma~\ref{lm:parallelogrammes}(iv), one has $\beta \in [0,\alpha]$.
    Now we apply Lemma~\ref{lm:Maxwell}. 
    By Melzak reduction we can replace $A_1$ and $B_1$ with $C$.
    The unitary coefficient at $C$ is $e^{-\beta i}$, at $A_k$ is $e^{(-2\pi/3-\beta) i}$ or $e^{(-\pi/3-\beta) i}$ and at $B_k$ is $e^{(\pi/3-\beta) i}$ or $e^{(2\pi/3-\beta) i}$, respectively. 
    By the proof of Lemma~\ref{lm:parallelogrammes}(i) the sum of the coefficients at $A_k$ and $B_k$ is zero for every $k > 1$.
    So the summands with indices $k$ are either equal to
    \[
    2\sin \alpha \, e^{(-2\pi/3-\beta) i} \lambda^{k-1} i = 2\sin \alpha \, e^{(-\pi/6-\beta) i}\lambda^{k-1}
    \] or to
    \[
    2\sin \alpha \, e^{(-\pi/3-\beta) i} \lambda^{k-1} i = 2\sin \alpha \, e^{(\pi/6-\beta) i} \lambda^{k-1}.
    \]
Proper naming of $a$ and $b$ gives us the desired result.
\end{proof}

\begin{lemma} \label{lm:morelength}
    Suppose that $\lambda \leq \frac{1}{2}$ and $\alpha < \frac \pi 6$ satisfy~\eqref{eq:438466}. 
    If $\H (\St_1^1) = \H (\St_1^2)$ for distinct full* Steiner trees $\St_1^1$ and $\St_1^2$ for $\mathcal{A}_1$, then $\St_1^2$ is the reflection of 
    $\St_1^2$ with respect to the angle bisector.   
\end{lemma}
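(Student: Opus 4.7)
The plan is to use the closed-form length formula of Lemma~\ref{lm:generalcomplex} together with the wind-rose rigidity from Lemma~\ref{lm:parallelogrammes}(i). For $j\in\{1,2\}$, write
\[
  \H(\St_1^j)=|z_j|,\qquad z_j=\cos\alpha+\sqrt{3}\sin\alpha+a_j e^{i\pi/6}+b_j e^{-i\pi/6},
\]
with $a_j = 2\sin\alpha\sum_{k\in J_j}\lambda^k$ and $b_j = 2\sin\alpha\sum_{k\in\mathbb N\setminus J_j}\lambda^k$ for some $J_j\subset\mathbb N$. The crucial observation is that $a_j+b_j = 2\sin\alpha\cdot\lambda/(1-\lambda)$ is independent of $j$; expanding $a_j e^{i\pi/6}+b_j e^{-i\pi/6}=\tfrac{\sqrt{3}}{2}(a_j+b_j)+\tfrac{i}{2}(a_j-b_j)$, the real part of $z_j$ is therefore the same for both trees, so that $|z_j|^2$ is an affine function of $(a_j-b_j)^2$ alone.

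Consequently, $|z_1|=|z_2|$ is equivalent to $|a_1-b_1|=|a_2-b_2|$, which, together with the equality $a_1+b_1=a_2+b_2$, leaves exactly two possibilities: either $(a_1,b_1)=(a_2,b_2)$, or $(a_1,b_1)=(b_2,a_2)$. In the first alternative, $z_1=z_2$ and so the angle $\beta:=\arg z_j$ (which fixes $e^{-i\beta}$ in the identity of Lemma~\ref{lm:generalcomplex}) coincides for both trees; since $\beta$ determines the three directions of the wind-rose, $\St_1^1$ and $\St_1^2$ have the same wind-rose, and Lemma~\ref{lm:parallelogrammes}(i) forces $\St_1^1=\St_1^2$, contradicting the assumed distinctness. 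Hence we are in the second alternative, which gives $z_2=\overline{z_1}$ and therefore $\arg z_2=-\arg z_1$.

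Let $\sigma$ denote the reflection across the angle bisector, which in the complex coordinates of Lemma~\ref{lm:generalcomplex} is the real axis. Since $\mathcal A_1$ is $\sigma$-invariant, $\sigma(\St_1^1)$ is again a full* Steiner tree for $\mathcal A_1$, and from the construction its wind-rose is the $\sigma$-image of the wind-rose of $\St_1^1$; in particular its $\beta$-angle is $-\arg z_1=\arg z_2$. Thus $\sigma(\St_1^1)$ and $\St_1^2$ share the same wind-rose, and a final application of Lemma~\ref{lm:parallelogrammes}(i) yields $\sigma(\St_1^1)=\St_1^2$, proving the claim.

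The main delicate step is verifying the correspondence \emph{wind-rose $\leftrightarrow$ angle $\beta$} and the fact that $\sigma$ acts on this angle by sign reversal; both follow directly from the conventions fixed in Lemma~\ref{lm:generalcomplex} (bisector $=$ real axis and $\beta$ introduced as the smallest angle between an edge and that axis). Notably, the argument never requires injectivity of the map $J\mapsto(a,b)$ — a potential pitfall at the boundary value $\lambda=\tfrac{1}{2}$ — because the tree is recovered from the wind-rose via Lemma~\ref{lm:parallelogrammes}(i), so whenever two distinct $J$'s produce the same $(a,b)$ they automatically yield the same tree.
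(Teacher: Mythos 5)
Your proposal is correct, and while the first half coincides with the paper's argument (the length formula of Lemma~\ref{lm:generalcomplex}, the observation that $a_j+b_j$ is independent of $j$, and the reduction to the two alternatives $(a_1,b_1)=(a_2,b_2)$ or $(a_1,b_1)=(b_2,a_2)$ — your expansion in terms of $(a_j-b_j)^2$ is the same computation as~\eqref{eqlengthab}), the way you dispose of the first alternative is genuinely different from the paper's. The paper deduces $J_1=J_2$ from the injectivity of $J\mapsto\sum_{k\in J}\lambda^k$ for $\lambda\le\frac12$, builds the abstract graph $G$ encoded by $J$, and invokes the convexity result (Theorem~\ref{th:uniq}) to conclude that the two locally minimal immersions of $G$ have parallel edges, hence a common wind-rose, hence coincide by Lemma~\ref{lm:parallelogrammes}(i). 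You instead extract the wind-rose directly from the complex length formula: since $e^{-i\beta}z_j=|z_j|$ forces $\beta=\arg z_j$, equality of the $z_j$ gives equality of the (signed) tilt angles and hence of the wind-roses, and Lemma~\ref{lm:parallelogrammes}(i) finishes. This is shorter, avoids Theorem~\ref{th:uniq} entirely, and — as you note — sidesteps the injectivity of $J\mapsto(a,b)$, which is genuinely delicate at $\lambda=\frac12$ where distinct index sets can produce the same value; the paper's step ``$J_1=J_2$'' is not literally justified there, whereas your route never needs it. Your treatment of the second alternative (passing to $\sigma(\St_1^1)$, whose tilt is $-\arg z_1=\arg z_2$, and applying Lemma~\ref{lm:parallelogrammes}(i) once more) also makes explicit what the paper only asserts in its last sentence. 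The one point to flag is that Lemma~\ref{lm:generalcomplex} states $\beta\in[0,\alpha]$, so to run your argument one must read $\beta$ as the signed angle in $(-\alpha,\alpha)$ (which is what the Maxwell-type derivation actually produces, and is forced anyway if a tree and its distinct mirror image are both to satisfy the formula); with that reading your proof is complete.
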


\begin{proof}
Assume the contrary. Lemma~\ref{lm:generalcomplex} gives
\[
\left | \cos \alpha + \sqrt{3} \sin \alpha + a_1 e^{\frac{\pi i}{6}} + b_1 e^{-\frac{\pi i}{6}}\right | = \left | \cos \alpha + \sqrt{3} \sin \alpha + a_2 e^{\frac{\pi i}{6}} + b_2 e^{-\frac{\pi i}{6}}\right |.
\]
Put $c = \cos \alpha + \sqrt{3} \sin \alpha$ and $L_j = \cos \alpha + \sqrt{3} \sin \alpha + a_j e^{\frac{\pi i}{6}} + b_j e^{-\frac{\pi i}{6}}$, $j=1,2$. Then
\begin{equation} \label{eqlengthab}
    L_j \cdot \bar{L_j} = c^2 + c(a_j + b_j)(e^{\frac{\pi i}{6}} + e^{-\frac{\pi i}{6}}) + a_j^2 + b_j^2 + a_jb_j(e^{\frac{\pi i}{3}} + e^{-\frac{\pi i}{3}}) = c^2 + \sqrt{3} c (a_j + b_j) + (a_j + b_j)^2 - a_jb_j.
\end{equation}
Recall that $a_j + b_j = 2\sin\alpha \cdot  \frac{\lambda}{1-\lambda}$, so $L_1 \cdot \bar{L_1} = L_2 \cdot \bar{L_2}$ implies
$a_1b_1 = a_2b_2$, hence we have either $a_1 = a_2$ and $b_1 = b_2$ or $a_1 = b_2$ and $a_2 = b_1$.
Without loss of generality we may assume that we are in the first case. 

Let $a_1$ and $a_2$ be the sums of $\lambda^k$ over $k \in J_1$ and $k \in J_2$, respectively.
Since $\lambda \leq \frac{1}{2}$ the sets of indices $J_1$ and $J_2$ coincide, say $J_1 = J_2 := J$.
Define an abstract graph $G = (V,E)$ with vertices
\[
V := \{A_k,B_k,S_k,T_{k+1} \, | \, k \in \mathbb{N}\}
\]
and edges 
\[
E := \{(A_1,S_1), (S_1,B_1)\} \cup \{(S_k,T_{k+1}) \, | \, k\in \mathbb{N} \} \cup 
\]
\[
\{(A_{k+1},S_{k+1}), (B_{k+1},T_{k+1}) \, | \, k \in J \} \cup \{(A_{k+1},T_{k+1}), (B_{k+1},S_{k+1}) \, | \, k \in \mathbb{N} \setminus J \}.
\]
Recall that $\St_1^1$ and $\St_1^2$ are Steiner trees, so they are global (and hence local) minima of $\ell_G(\cdot)$. 
By Theorem~\ref{th:uniq} for $G$ and $V_0 = \mathcal{A}_1 \setminus \{A_\infty\}$ the trees $\St_1^1$ and $\St_1^2$ have a common wind-rose and by Lemma~\ref{lm:parallelogrammes}(i) they coincide. Thus in the case $a_1 = b_2$ and $a_2 = b_1$ the trees $\St_1^1$ and $\St_1^2$ are symmetric.
\end{proof}

Now we are ready to prove Theorem~\ref{theo:main2}.

\begin{remark} \label{rem:localminimas}
    Here we have shown that a full* Steiner tree for $\mathcal{A}_1$ is determined (up to the reflection) by knowing which of the two possible directions of the 
    wind-rose is used in each vertex $A_k$ (this determines the set $J\subset \mathbb N$). 
    We don't know which are the sets $J$ that correspond to a locally minimal tree: the answer may depend on $\alpha$ and $\lambda$ in some complicated way (we discuss it in Section~\ref{sec:dynamics}). 
    Fortunately, a relatively simple argument allows us to find a very strict 
    condition on $J$ which must hold for an absolute minimimer.
\end{remark}

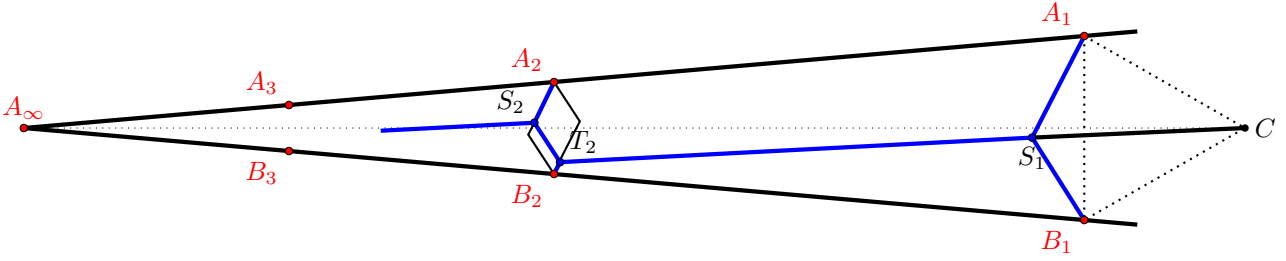
\begin{figure}[h]
    \centering
    \begin{tikzpicture}[scale = 0.7, rotate = -5]

\coordinate [label={[red,above]:$A_\infty$}] (ab) at (0,0);

\draw [ultra thick] (ab) --++ (10:2.5) coordinate  (a3) --++ (10:2.5) coordinate [label={[red,above left]:$A_{3}$}] (a2)
--++ (10:5) coordinate [label={[red,above left]:$A_{2}$}] (a1) --++ (10:10) coordinate  [label={[red,above left]:$A_{1}$}] (a0) --++ (10:1);

\draw [ultra thick] (ab) --++ (0:2.5) coordinate  (b3) --++ (0:2.5) coordinate [label={[red,below left]:$B_{3}$}] (b2)
--++ (0:5) coordinate [label={[red,below left]:$B_{2}$}] (b1) --++ (0:10) coordinate  [label={[red,below left]:$B_{1}$}] (b0) --++ (0:1);

\draw[thick] (a1) --++ (-52:0.89) --++ (-112:1.1);
\draw[thick] (b1) --++ (128:0.89) --++ (68:1.1);

\draw[thick,dotted] (a0) --++ (-25:3.486) coordinate  [label={[right]:$C$}] (c) -- (b0) -- (a0);
\draw[dotted] (c) -- (ab);

\draw[ultra thick, blue] (a0) --++ (-112:2.16) coordinate (y0) -- (b0);
\draw[ultra thick] (y0) -- (c);
\draw[ultra thick, blue] (y0) --++ (-172:8.88) coordinate (y1) -- (b1);
\draw[ultra thick, blue] (y1) --++ (128:0.89) coordinate (y2) -- (a1);
\draw[ultra thick, blue] (y2) --++ (188:2.89);

\foreach \n in {0,1,2} {
    \draw [fill=red] (a\n) circle (2pt);
    \draw [fill=red] (b\n) circle (2pt);
  }
\draw [fill=red] (ab) circle (2pt);  
\draw [fill=blue] (y0) circle (2pt) node[below]{$S_1$};  
\fill (c) circle (2pt);  
\draw [fill=blue] (y2) circle (2pt) node[above left]{$S_2$};
\draw [fill=blue] (y1) circle (2pt) node[above right]{$T_2$};

\end{tikzpicture}
    \caption{The construction used in the proof of Theorem~\ref{theo:main2}.}
    \label{pict:observations}
\end{figure}

Consider the complex coordinate system from Lemma~\ref{lm:generalcomplex}.

Let $\St_1$ be a full* Steiner tree for $\mathcal A_1$ and let $\mathcal{P}_k$ be the parallelogram $A_kV_kB_kU_k$ defined in Lemma~\ref{lm:parallelogrammes}(iv). Without loss of generality suppose that $S_1 = V_1$ lies in the lower half-plane (otherwise consider the reflection of $\St_1$). 
Thus every $U_j$ lies in the upper half-plane and every $V_j$ belongs to the lower half-plane. 
Clearly for every $j$ the segments $[U_jA_j]$ lie in the upper half-plane and $[V_jB_j]$ lie in the lower half-plane.
We need the following observation.

\begin{observation} \label{obs:obs}
\begin{enumerate}
    \item[(i)] One has $|V_jB_j|=|U_jA_j|<|U_jB_j|=|A_jV_j|$ for every $j$ as $U_j$ belongs to the upper half-plane and the inequality $|XA_j| < |XB_j|$ holds for every point $X$ from the upper half-plane.
    \item[(ii)] By Lemma~\ref{lm:Melzak} the point $T_2$ lies in the lower half-plane as $T_2$ belongs to the ray $[C,S_1)$, where $C$ is the Melzak point of $A_1$ and $B_1$ (recall that $C$ belongs to the real axis) and thus $S_1$ belongs to the lower half-plane. Then we have $T_2 \in [V_2B_2]$.  
    \item[(iii)] If $T_j$ belongs to the upper half-plane then $T_{j+1}$ belongs to the lower half-plane as the ray $[S_jT_{j+1})$ is co-directed with the ray $[S_1T_2) \subset [CT_2)$ and $S_j$ belongs to $[V_jB_j]$ and hence to the lower half-plane. 
\end{enumerate}
\end{observation}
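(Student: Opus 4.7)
My plan is to handle the three items separately, using the parallelogram structure from Lemma~\ref{lm:parallelogrammes}(iv), the reflection symmetry of $\{A_j, B_j\}$ across the real axis, and Melzak's reduction. For item (i), the equalities $|V_jB_j|=|U_jA_j|$ and $|A_jV_j|=|U_jB_j|$ are just equality of opposite sides in the parallelogram $\mathcal{P}_j = A_jV_jB_jU_j$. To get the strict inequality $|U_jA_j|<|U_jB_j|$ I will argue that $U_j$ is in the open upper half-plane: the diagonals of a parallelogram bisect each other, so $A_j+B_j=V_j+U_j$, and since the right hand side of $U_j = A_j+B_j-V_j$ has real part $2\lambda^{j-1}\cos\alpha$, we obtain $\operatorname{Im}(U_j)=-\operatorname{Im}(V_j)$. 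By hypothesis $\operatorname{Im}(V_1)<0$, and since all $\mathcal{P}_j$ share the same wind-rose and their $[A_jB_j]$ diagonals are pairwise parallel, $\operatorname{Im}(V_j)<0$ and hence $\operatorname{Im}(U_j)>0$ for every $j$. Finally, since $A_j=\overline{B_j}$, any point strictly in the upper half-plane is strictly closer to $A_j$ than to $B_j$.

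For item (ii), I apply Lemma~\ref{lm:Melzak} with $p_1=A_1$, $p_2=B_1$, $q=S_1$. The Melzak point $p=C$ is the third vertex of the equilateral triangle on $[A_1B_1]$ opposite $S_1$; by the reflection symmetry of $A_1$ and $B_1$ across the real axis, $C$ lies on the real axis. The lemma asserts that in the reduced tree the third edge at $S_1$ is prolonged to $C$; equivalently, in the original tree, $T_2$ (the neighbour of $S_1$ along this edge) lies on the open ray starting at $C$, passing through $S_1$, and continuing past $S_1$. Since $S_1$ is strictly below the real axis and $C$ is on it, the ray stays strictly below past $S_1$, so $T_2$ is in the lower half-plane.

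For item (iii), assume $T_j$ is in the open upper half-plane. Since $T_j \in [A_jU_j]\cup[U_jB_j]$ and $[U_jB_j]$ must cross the real axis (as $U_j$ is above and $B_j$ below), the only compatible possibility is $T_j \in [A_jU_j]$. The branching edges at $S_j$ and $T_j$ are required to point in opposite directions along the third (non-parallelogram) wind-rose direction; a direct analysis of the possible shapes of $P_j\subset\mathcal{P}_j$ (whose segments all lie along the wind-rose) then forces $S_j$ onto $[V_jB_j]$, so $S_j$ is strictly below the real axis. Now all the inter-parallelogram segments $[S_kT_{k+1}]$ are parallel --- they all point inward toward $A_\infty$ along the third wind-rose direction --- so the ray $[S_j,T_{j+1})$ is co-directed with $[S_1,T_2)$, which by (ii) is co-directed with $[C,S_1)$ and hence has strictly negative imaginary part. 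Starting from $S_j$ in the open lower half-plane and moving in this direction lands $T_{j+1}$ strictly below the real axis.

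The main obstacle is the step in (iii) showing that $T_j \in [A_jU_j]$ forces $S_j \in [V_jB_j]$. The path $P_j$ may bend inside $\mathcal{P}_j$, and any of $A_j, B_j, V_j, U_j$ may be a degree-two vertex with an extra edge along the third wind-rose direction, so several combinatorial types of $P_j$ have to be excluded. I would enumerate the finitely many such types compatible with the wind-rose and with Lemma~\ref{lm:parallelogrammes}, and in each surviving case check by inspection that $S_j$ lies on $[V_jB_j]$ rather than on $[A_jV_j]$.
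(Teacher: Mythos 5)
Your treatment of items (i) and (ii) is correct and follows the paper's own (inline) justification essentially verbatim: (i) is the parallelogram identity plus the reflection symmetry $A_j=\overline{B_j}$, with $\Im U_j>0$ obtained from the normalization that $S_1=V_1$ lies in the lower half-plane (the paper gets this from the homothety $\mathcal P_j=f_{j-1}(\mathcal P_1)$ centered at $A_\infty$ on the real axis, which is equivalent to your diagonal-midpoint computation), and (ii) is exactly the Melzak-point/ray argument of the paper.

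The genuine gap is in (iii), and it is twofold. First, your deduction ``$T_j$ lies in the open upper half-plane, hence $T_j\in[A_jU_j]$'' does not follow from the reason you give: that $[U_jB_j]$ crosses the real axis does not exclude $T_j$ from lying in the \emph{upper} portion of $[U_jB_j]$, which is also in the open upper half-plane. (The paper's one-line justification silently makes the same leap; in the application inside the proof of Theorem~\ref{theo:main2} the hypothesis of (iii) is only ever invoked when the stronger fact $T_j\in[A_jU_j]$ --- equivalently, that the previous index is not in $J$ --- is already known, so the observation should really be read with that hypothesis, and your write-up should say so rather than pretend the half-plane condition alone suffices.) Second, the step you single out as ``the main obstacle'', namely $T_j\in[A_jU_j]\Rightarrow S_j\in[V_jB_j]$, is left as an unexecuted enumeration, whereas it is in fact the easy part: by Lemma~\ref{lm:321066} and the proof of Lemma~\ref{lm:parallelogrammes}, the path $P_j$ carries exactly two points of $\overline{\St\setminus P_j}$, with $S_j\in[A_jV_j]\cup[V_jB_j]$ and $T_j\in[A_jU_j]\cup[U_jB_j]$, and $P_j$ consists of the three edges $[A_jx]$, $[xy]$, $[yB_j]$ with $\{x,y\}=\{S_j,T_j\}$. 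If $T_j\in\,]A_jU_j[$ then $T_j$ is the vertex adjacent to $A_j$, so $S_j$ is the vertex adjacent to $B_j$ and therefore lies on $[V_jB_j]$; no further case analysis is needed (this is precisely the two-case dichotomy drawn in Figure~\ref{pict:dynamics}). So as written the proposal misjustifies the one step that needs care and defers the step that does not. A minor further point: you do not address the sub-claim ``$T_2\in[V_2B_2]$'' of (ii); note that with the conventions of Lemma~\ref{lm:parallelogrammes} this should be the far side $[U_2B_2]$, whose lower portion is where the half-plane argument places $T_2$.
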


Lemma~\ref{lm:generalcomplex} says that
    \[
    \H(\St_1) =  e^{-\beta i} \left ( \cos \alpha + \sqrt{3} \sin \alpha + a e^{\frac{\pi i}{6}} + b e^{-\frac{\pi i}{6}}\right ) = \left | \cos \alpha + \sqrt{3} \sin \alpha + a e^{\frac{\pi i}{6}} + b e^{-\frac{\pi i}{6}}\right |,
    \]
where $a = 2\sin \alpha \sum_{j \in J} \lambda^j$, $b = 2\sin \alpha \sum_{j \in \mathbb{N} \setminus J} \lambda^j$ for some $J \subset \mathbb{N}$.
Looking at equation~\eqref{eqlengthab} one notices 
that since $a+b$ is fixed (independent of $J$) the total length is smaller when $ab$ is larger, or, which is the same, when $|a-b|$ is smaller. 
By Observation~\ref{obs:obs}(ii), one has $1 \in J$. Observation~\ref{obs:obs}(iii) says that $j$ and $j+1$ cannot belong to $\mathbb{N} \setminus J$ simultaneously.
So $|a-b|$ is minimal for $J = \{2k-1 \, | \, k\in \mathbb{N}\}$. 
The construction described in the proof of Lemma~\ref{lm:fullstar} started with a full tree for $A_1,A_2,A_3,B_1,B_2$ shows that such $J$ is admissible. So the following lemma concludes the proof.

\begin{lemma} \label{lm:5points} 
Suppose that $\lambda \leq \frac{1}{2}$ and $\alpha < \pi/6$ satisfy the condition~\eqref{eq:438466}. 
Then every Steiner tree for $\mathcal{A}_5 := \{A_1,A_2,A_3,B_1,B_2\}$ is full.
\end{lemma}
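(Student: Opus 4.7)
My plan is to argue by contradiction. Suppose $\St_5$ is a Steiner tree for $\mathcal A_5$ that is not full, so that some terminal $P \in \mathcal A_5$ has degree at least $2$ in $\St_5$. I will then exhibit a strictly shorter spanning competitor.

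The first step is to observe that the argument of Lemma~\ref{lm:321066} applies verbatim to $\St_5$ at level $k = 1$: the angular regions $\mathcal W_1, \mathcal W_2$ with vertices on $[B_1 B_2]$ and $[A_1 A_2]$ have openings strictly greater than $2\pi/3$ by~\eqref{eq:438466}, and neither $A_3$ nor $B_2$ lies inside $\angle A_1 W_1 A_2$ or $\angle B_1 W_2 B_2$ (since $A_3$ lies beyond $A_2$ from $W_1$ towards $A_\infty$, and symmetrically for $B_2$), so $\mathcal W_1 \cup \mathcal W_2$ is disjoint from $\mathcal A_5$. Lemma~\ref{lm:473988} therefore yields $\St_5 = T_1 \cup [xy] \cup T_2$, where $T_1$ is a Steiner tree for $\{A_1, B_1, x\}$, $T_2$ is a Steiner tree for $\{A_2, A_3, B_2, y\}$, and $[xy]$ is a single segment. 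I would next rule out $P \in \{A_1, B_1\}$: if $A_1$ (WLOG) has degree $2$ in $\St_5$, then $T_1 = [B_1 A_1] \cup [A_1 x]$ with $\angle B_1 A_1 x \ge 2\pi/3$ required for local minimality of the V; a direct computation of the direction of $[A_1, x]$ for any admissible $x \in \partial \mathcal C_1 \subset \mathcal W_1 \cup \mathcal W_2$ should give $\angle B_1 A_1 x < 2\pi/3$ under~\eqref{eq:438466}, a contradiction obtained by inserting a Fermat--Torricelli point near $A_1$.

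For the remaining possibility $P \in \{A_2, A_3, B_2\}$, the subtree $T_1$ is either a tripod with Steiner point $S_1$ or a V at $x$ (in which case $x$ becomes a genuine Steiner point of $\St_5$). In both situations, Lemma~\ref{lm:Melzak} replaces $A_1, B_1$ by their Melzak point $C_1$ on the bisector at distance $\cos\alpha + \sqrt 3\sin\alpha$ from $A_\infty$, yielding an equivalent Steiner tree $\St_5'$ on the four points $\{C_1, A_2, A_3, B_2\}$ of the same length. Iterating the angular decomposition between $\{A_2, B_2\}$ and $\{A_3\}$ (the condition~\eqref{eq:438466} still provides openings $> 2\pi/3$ at scale $\lambda$, and the absence of $B_3$ only enlarges the empty sector on the lower side) reduces the problem further to a $3$-terminal Steiner tree plus a segment ending at $A_3$, and a parallel angle estimate — together with Theorem~\ref{th:uniq} applied to the natural full competitor and each alternative topology — rules out any degree-$\ge 2$ behavior at $A_2, A_3, B_2$. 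Since every case leads to a contradiction, $\St_5$ must be full.

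The main obstacle is the quantitative angle comparison needed to rule out each V configuration: one must show that for \emph{every} admissible position of the auxiliary endpoints ($x, y$, and those produced by the iterated decomposition) the angle at the putative degree-$2$ terminal is strictly below $2\pi/3$. I expect this to reduce to an elementary monotonicity calculation in $\alpha$ and $\lambda$, exploiting that the strict inequality~\eqref{eq:438466} provides genuine slack compared with the borderline condition ``opening $= 2\pi/3$'' and that the auxiliary points are confined to explicit thin strips determined by the wind-rose, so their contributions to the relevant angles can be uniformly controlled.
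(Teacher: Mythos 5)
Your opening move --- separating $\{A_1,B_1\}$ from $\{A_2,A_3,B_2\}$ by the wedges of Lemmas~\ref{lm:473988} and~\ref{lm:321066}, so that $A_1,B_1$ hang off a common Steiner point --- is exactly what the paper does. But your exclusion of degree two at $A_1,B_1$ is more labour than needed: since $\St_5\subset\conv\mathcal{A}_5=A_3A_1B_1B_2$ and the interior angles of this hull at $A_1$, $B_1$ and $A_3$ are $\pi/2-\alpha$, $\pi/2-\alpha$ and less than $\pi/2+\alpha$, all below $2\pi/3$, none of these three terminals can carry two edges of a minimizer; only $A_2$ and $B_2$ survive as candidates, with no computation of the position of $x$ required.

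The genuine gap is in how you dispose of those remaining candidates. First, Theorem~\ref{th:uniq} compares two locally minimal immersions of the \emph{same} abstract graph and concludes they are parallel and of equal length; it yields no inequality between the full topology and an alternative topology, so ``Theorem~\ref{th:uniq} applied to the natural full competitor and each alternative topology'' cannot produce the comparison you need. Second, and more seriously, your stated strategy --- show that in every non-full configuration the angle at the putative degree-two terminal is below $2\pi/3$ --- does not cover all cases. The paper's analysis is: (a) if both $A_2$ and $B_2$ have degree two, then (there being only one branching point left) the path from the Steiner point $Y$ to $A_3$ must contain the segment $[A_2B_2]$, and the turn there is at most $\pi/2-\alpha$; (b) if some full component is a single edge it must be $[A_2A_3]$, and then the angle $\angle A_3A_2W=2\pi/3-\alpha<2\pi/3$ at the second Steiner point $W$ of the remaining symmetric four-point tree gives the contradiction; but (c) the last configuration --- two similar tripods glued at a degree-two terminal --- can satisfy all the $2\pi/3$ angle constraints and is a legitimate locally minimal competitor; the paper excludes it only by a \emph{global length} comparison via the explicit formula~\eqref{eqlengthab}. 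Your proposal contains no mechanism for case (c), and an angle estimate at the degree-two terminal should not be expected to work there; this is precisely the point where a length computation, not local geometry, is required.
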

\begin{proof}
Assume the contrary, then a Steiner tree $\St_5$ contains a terminal $X$ of degree 2. Recall that the Steiner tree is contained in $\conv \mathcal{A}_5 = A_3A_1B_1B_2$ (see Figure~\ref{pict:mainth2}). Clearly, $\angle A_3A_1B_1 =  \angle A_1B_1B_2 = \pi/2 - \alpha < 2\pi/3$ and $\angle B_2A_3A_1 < \angle B_3A_3A_1 = \pi/2 + \alpha < 2\pi/3$.
Thus $X$ is either $A_2$ or $B_2$. Repeating the argument of Lemma~\ref{lm:321066} we conclude that $A_1$ and $B_1$ are adjacent to the same Steiner point $Y$. Now if both $A_2$ and $B_2$ have degree 2, then $\St_5$ contains segment $[A_2B_2]$, which is impossible since 
$\angle A_3A_2B_2, \angle A_3B_2A_2 \leq \pi/2 - \alpha$. 

Now suppose that $\St_5$ contains a full component which is an edge $[XZ]$. Clearly $[XZ]$ cannot split $\conv \mathcal{A}_5$ because $[XZ]\neq[A_2B_2]$, 
so $Z = A_3$. Since $|A_2A_3| < |B_2A_3|$, in this case $\St_5$ is a union of $[A_2A_3]$ and a symmetric solution for $\{A_1,B_1,A_2,B_2\}$.
But then $\angle A_3A_2W = 2\pi/3 - \alpha$, where $W$ is the second Steiner point, a contradiction.

In the remaining case $\St_5$ is a union of two similar tripods, which is longer by~\eqref{eqlengthab}.
\end{proof}

\section{Proof of Theorem~\ref{theo:main}}  \label{sec:proof2}

In this section, we prove Theorem~\ref{theo:main}. Clearly $\mathcal{A}_0$ is a compact set, so the Steiner problem has a solution.
By Lemma~\ref{lm:parallelogrammes}(ii) every Steiner set for $\mathcal{A}_0$ is a Steiner tree. 
Suppose that $\St_0$ is a Steiner tree for $\mathcal{A}_0$; obviously $\H(\St_0) < \infty$.
Denote the point $\overline{\St_0} \cap [A_0B_0]$ by $x$. 

\begin{lemma}\label{lm:nedofull}
    $\St_0$ is a full* tree.
\end{lemma}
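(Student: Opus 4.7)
The goal is to verify the defining properties of full* for $\St_0$: connectedness (immediate, since $\St_0$ is a tree by Lemma~\ref{lm:parallelogrammes}(ii)), and that every convex angle between two edges incident to a common vertex equals $2\pi/3$. Corollary~\ref{cor:isolated} guarantees that away from the single accumulation point $A_\infty$ the tree is a locally finite embedded graph, so the standard first-variation arguments for finite Steiner trees give two facts: every Steiner (non-terminal) point is a regular tripod with three $2\pi/3$ angles, and every terminal has all pairwise incident angles at least $2\pi/3$. A terminal of degree three is then automatically a full tripod (three angles summing to $2\pi$, each at least $2\pi/3$, must each equal $2\pi/3$), so the content of the lemma reduces to excluding the remaining obstruction: a terminal of degree two whose unique incident angle strictly exceeds $2\pi/3$.

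The distinguished terminal $x = \overline{\St_0}\cap [A_0B_0]$ presents no issue: the proof of Lemma~\ref{lm:parallelogrammes}(ii) shows that $\St_0$ meets $[A_0B_0]$ via a single edge, so $x$ has degree one and carries no angle constraint. For the remaining terminals $A_k, B_k$ with $k \ge 1$ I argue by contradiction, assuming some $A_k$ has degree two with incident angle $\theta > 2\pi/3$. I apply Lemma~\ref{lm:321066} with indices $k-1$ and $k$ to obtain nested convex regions $\mathcal{C}_{k-1}\subset \mathcal{C}_k$ such that the shell $\mathcal{C}_k\setminus \mathcal{C}_{k-1}$ contains only the terminals $A_k, B_k$, and $\St_0$ enters and exits the shell along two single straight segments. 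The restriction of $\St_0$ to the shell is therefore a finite Steiner tree with four effective boundary data: the two fixed terminals $A_k, B_k$ and two sliding endpoints, each constrained to a ray whose direction is fixed by the regular-tripod rule at the adjacent Steiner points.

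I then solve the four-point shell subproblem by enumerating the few admissible combinatorial topologies and reducing each to an explicit length via Melzak's reduction (Lemma~\ref{lm:Melzak}). The hypotheses $\lambda \le \tfrac{1}{2}$ and~\eqref{eq:438466} force the unique length-minimizing topology to be a pair of regular tripods joined by a single edge, with $A_k$ and $B_k$ appearing as leaves, which contradicts the assumed degree-two configuration at $A_k$ with angle $\theta > 2\pi/3$. The case of $B_k$ is handled symmetrically. The main obstacle is the coupling: the two sliding endpoints are themselves Steiner points of the global tree, so the shell subproblem is not completely decoupled from the rest of $\St_0$. This is resolved either by a direct length comparison over the restricted parameter range permitted by the tripod rule, or by invoking Theorem~\ref{th:uniq} on the convexity of length to pin down the edge directions of any locally minimal immersion from the terminal positions alone.
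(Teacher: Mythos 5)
Your reduction to excluding a degree-two terminal whose incident angle strictly exceeds $2\pi/3$ is correct, and it is also where the paper starts (phrased there as: the maximal full* subtree $\St_0'$ containing $x$ must be all of $\St_0$, since otherwise some terminal $A_k$ has degree $2$ in $\St_0$ and degree $1$ in $\St_0'$). The gap is in how you exclude this configuration. You propose to cut out the shell between consecutive separators from Lemma~\ref{lm:321066}, treat the restriction of $\St_0$ as a four-point Steiner problem with two sliding boundary points, and show by Melzak enumeration that the degree-two topology loses. But such a comparison is only legitimate if the competing topology is shorter \emph{for the same boundary data}, i.e.\ with the two crossing points fixed where $\St_0$ actually places them; and a degree-two terminal with angle $>2\pi/3$ is perfectly compatible with being the exact minimizer for such boundary data (this is what happens in the even ladders cited in the introduction, and it is the point of Remark~\ref{rem:localminimas}: locally minimal configurations of this kind do exist, so no purely local re-optimization can rule them out). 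Your two proposed fixes do not close this. Theorem~\ref{th:uniq} compares two locally minimal immersions of the \emph{same} abstract graph with the same fixed vertices, whereas here the two candidates have different topologies; and the ``direct length comparison over the restricted parameter range'' is precisely the hard step and is nowhere carried out.

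The paper's actual mechanism is global and exploits the symmetry of $\mathcal A_0$. Since $\St_0'$ is full* and its wind-rose contains the bisector direction (forced by the edge through $x$ being orthogonal to $[A_0B_0]$), the edge $[A_kz]$ into the full* part must be parallel to the bisector; after ruling out the case where the other neighbour $y$ of $A_k$ is a terminal, one replaces $\St_0'$ by its reflection in the bisector, obtaining a competitor of \emph{equal} length which now meets the branching point $y$ at $B_k$ with an angle strictly less than $2\pi/3$ --- impossible for a minimizer. This exchange argument, which turns global optimality plus symmetry into a local contradiction, is the ingredient your proof is missing; without something of this global nature the contradiction does not materialize.
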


\begin{proof}
    Consider the full* subtree of $\St_0$ containing $x$
    which is maximal with respect to inclusion. Call it $\St_0'$.
    Since the segment of $\St_0$ containing $x$ is orthogonal to $[A_0B_0]$, the wind rose of $\St_0'$ contains the direction of the bisector of the angle. 
    Suppose, by contradiction that $\St_0 \setminus \St_0'$ is non-empty.  Then $\St_0'$ contains a terminal (without loss of generality suppose it is $A_k$) which has 
    degree~2 in $\St_0$ and degree 1 in $\St_0'$. 

    Let $[A_ky]$, $[A_kz]$ be the edges incident to $A_k$, where $z$ is a vertex of $\St_0'$ and $y$ is a vertex of $\St_0$ but not of $\St_0'$.
    Since $\St_0'$ is maximal $\angle yA_kz > 2\pi/3$ and hence the segment 
    $[A_kz]$ must be parallel to the bisector.
    
    If $y$ is a terminal point then by Lemma~\ref{lm:321066} $y \in \{ A_{k+1},B_{k+1}\}$. 
    By Lemma~\ref{lm:321066} the edge $[A_ky]$ contains the segment of $\St_0$ connecting $\mathcal{C}_k$ and $\mathcal{D}_k$. Hence $A_k$ and $B_k$ belong to the same component of $\St_0 \setminus [yA_k[$. By Lemma~\ref{lm:321066} the tree $\St_0$ contains unique segment connecting $\mathcal{C}_{k-1}$ and $\mathcal{D}_{k-1}$, so the path connecting $A_k$ and $B_k$ in $\St_0$ has exactly one Steiner point, which should be $z$. This is impossible since $[A_kz]$ is parallel to the bisector.
    
    Thus $y$ is a branching point and so $[yB_k] \subset \St_0$; recall that the angle $yA_kz$ is strictly greater than $2\pi/3$. Let $\St$ be the union of $\St_0 \setminus \St_0'$ and the reflection of $\St_0'$. Let $z'$ be the reflection of $z$, then $B_kz'$ is also parallel to the bisector.
    Note that $\H(\St) = \H(\St_0)$, so $\St$ is also a solution to the Steiner problem for $\mathcal{A}$. But 
    \[
    \angle yB_kz' = 2\pi - \angle zA_ky - \angle A_kyB_k < 2\pi/3,
    \]
    so $\St$ is not a Steiner tree, contradiction.
    
\end{proof}

\begin{lemma}\label{lm:full}
    $\St_0$ is a full tree.
\end{lemma}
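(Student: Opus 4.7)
\textit{Proof plan.} I argue by contradiction. Assume $\St_0$ has a vertex of degree $2$. Since $\St_0$ is full* by Lemma~\ref{lm:nedofull} and every Steiner point of a full* tree has degree exactly three, the degree-$2$ vertex must be a terminal $T$. I rule out $T = A_\infty$ because the wedge at $A_\infty$ has opening $2\alpha < 2\pi/3$, so two edges from $A_\infty$ into $\conv \mathcal A_0$ cannot subtend an angle of $2\pi/3$; I rule out $T = x$ because its unique edge is orthogonal to $[A_0 B_0]$ by the variational argument of Lemma~\ref{lm:nedofull}. Hence $T = A_k$ or $T = B_k$ for some $k \geq 1$, and by the reflection symmetry across the bisector I may assume $T = A_k$.

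Next I constrain the two edge directions at $A_k$. The wind-rose of $\St_0$ has three undirected directions: the bisector direction (parallel to the edge at $x$) and the two directions at $\pm \pi/3$ from it. For $\alpha < \pi/6$, only three of the six directed wind-rose directions enter $\conv \mathcal A_0$ from $A_k$: the bisector direction pointing away from $A_\infty$ and its two rotations by $\pm 2\pi/3$ (the down-left and down-right directions into the wedge); the others would immediately exit through the upper side of the angle. Of these three admissible directions, exactly one pair subtends an angle of $2\pi/3$, namely the horizontal direction together with the down-left direction. Hence the two edges at $A_k$ must be one horizontal edge pointing away from $A_\infty$ and one down-left edge into the wedge.

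Finally, I derive the contradiction by tracing the horizontal edge. The down-left edge at $A_k$ lies along the side $[A_k V_k]$ of the parallelogram $\mathcal P_k$ of Lemma~\ref{lm:parallelogrammes}(iv), so $A_k$ plays the role of the attachment point $T_k$ on the $U_k$-side through which the subtree heading away from $A_\infty$ meets the path $P_k$. The horizontal edge from $A_k$ ends at another vertex $Y$ on the horizontal line through $A_k$. Since no terminal of $\mathcal A_1$ other than $A_k$ lies on this line, either $Y \in [A_0 B_0]$ or $Y$ is an interior Steiner point. In the first case Lemma~\ref{lm:parallelogrammes}(iii) forces $Y = x$, but then the original edge at $x$ and the new edge to $A_k$ both lie in the bisector direction, forcing either two superimposed edges at $x$ or an edge at $A_k$ pointing toward $A_\infty$ in a non-admissible direction; both are impossible. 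In the second case, the other two edges at $Y$ are in the directions at $\pm \pi/3$ from the bisector, and following the one heading away from $A_\infty$ into the parallelogram $\mathcal P_{k-1}$ via the attachment analysis of Lemma~\ref{lm:parallelogrammes} produces at level $k-1$ another Steiner point whose third edge is again a horizontal edge of the same type as at $A_k$. Iterating through $\mathcal P_{k-2}, \ldots, \mathcal P_1$, the chain of forced horizontal edges eventually lands on $[A_0 B_0]$ at a point different from $x$, again contradicting Lemma~\ref{lm:parallelogrammes}(iii).

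The main obstacle is the inductive tracing. Because the angle at $A_k$ is exactly $2\pi/3$, no local perturbation shortens the tree, and the contradiction must come globally from propagating the wind-rose constraints through the nested parallelograms. At every level one needs to verify that the positions forced by the wind-rose lie on the correct sides of $\mathcal P_{k-j}$ (using the specific geometry in which $V_{k-j}$ and $U_{k-j}$ sit on the bisector), that the side on which the new Steiner point lands alternates in a controlled way between the upper and the lower half, and that the chain indeed reaches $[A_0 B_0]$ in finitely many steps.
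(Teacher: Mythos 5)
Your reduction is sound up to and including the direction analysis at the degree-two vertex: it must be a terminal $A_k$ (or $B_k$), and its two edges are forced to be the horizontal edge pointing away from $A_\infty$ together with the edge in direction $-2\pi/3$ along the side $[A_kV_k]$ (a minor slip: the three admissible directed directions at $A_k$ are the horizontal one and its rotations by $-\pi/3$ and $-2\pi/3$, not by $\pm 2\pi/3$, but your conclusion about the unique admissible pair is correct). The gap is in the final step: you look for the contradiction on the \emph{outer} side, towards $[A_0B_0]$, and there is nothing to contradict there. The chain of horizontal connecting segments that you propagate through $\mathcal P_{k-1},\dots,\mathcal P_1$ is exactly the ordinary structure that \emph{every} full* competitor has; it terminates at the single point $x=\overline{\St_0}\cap[A_0B_0]$, and since $x$ is not known a priori at this stage (it is only pinned down later via the Maxwell formula), ``landing at a point different from $x$'' is not a meaningful conclusion --- the landing point is what \emph{defines} $x$. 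Likewise, in your first case ($k=1$, $Y=x$) the configuration in which the unique edge at $x$ is the horizontal segment $[xA_1]$ and $A_1$ carries a second edge along $[A_1V_1]$ is locally perfectly admissible: $x$ has degree one, nothing is superimposed, and no inadmissible direction appears.

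The actual obstruction lives on the \emph{inner} side, which is how the paper argues. Once $A_k$ has degree two one gets $T_k=A_k$, and the ``exactly two attachment points on the path $P_k$'' property from Lemma~\ref{lm:parallelogrammes} forces the inner attachment point to satisfy $S_k=V_k$: the continuation from $S_k$ to $B_k$ must have direction $-\pi/3$, and the only point of $[A_kV_k]$ seeing $B_k$ in that direction is the vertex $V_k$ of the rhombus. The horizontal edge out of $V_k$ then runs along the bisector, so $T_{k+1}=U_{k+1}$, and the inner full component is the rigid five-edge tree on $\{A_k,B_k,A_{k+1},B_{k+1}\}$ with branch points $V_k$ and $U_{k+1}$. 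Now $\angle A_\infty A_{k+1}U_{k+1}=\angle A_\infty B_{k+1}U_{k+1}=2\pi/3-\alpha$, and neither of the two directions making an angle of exactly $2\pi/3$ with $[A_{k+1}U_{k+1}]$ (resp.\ $[B_{k+1}U_{k+1}]$) enters $\conv\mathcal A_0$; hence neither $A_{k+1}$ nor $B_{k+1}$ can have degree two, and $U_{k+1}$ already carries three edges, so the tree cannot be continued to reach $A_{k+2},B_{k+2},\dots,A_\infty$. This inner argument is what your proof is missing; the outer tracing can be dropped entirely.
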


\begin{proof}
By Lemma~\ref{lm:nedofull} $\St_0$ is a full* tree. 
Let $\St_0'$ be the full component containing $x$. Consider a terminal of $\St_0'$ with degree $2$ (without loss of generality it is $A_k$). 
Then the structure of the other full component $\St$, containing $A_k$ is uniquely determined 
$S_k$ is the vertex of the rhombus $\mathcal{R}_k$. Thus $\St$ is a Steiner tree for 4 terminals $A_k, B_k, A_{k+1}, B_{k+1}$ (and its branching points are $S_k, T_{k+1}$). 
But 
\[
\angle A_\infty A_{k+1} T_{k+1} = \angle A_\infty B_{k+1} T_{k+1} = 2\pi/3 - \alpha
\]
which means that both $A_{k+1}$ and $B_{k+1}$ cannot be a point of degree 2 in $\St_0$. This is a contradiction.
\end{proof}

\begin{lemma} \label{lm:alternating}
  Consecutive horizontal parts of $\St_0$ lie on the opposite sides of the bisector of the angle.
\end{lemma}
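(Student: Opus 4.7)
The plan is to read off the sides of the bisector visited by each horizontal edge by exploiting the wind-rose structure of $\St_0$ together with the Steiner tripod condition at each branching point. By Lemma~\ref{lm:full} the set $\St_0$ is a full tree, and by Lemma~\ref{lm:parallelogrammes}(iv) its wind-rose consists of three axes: the bisector axis and two slanted axes at angles $\pm\pi/3$ from it. Hence a \emph{horizontal part} of $\St_0$ is an edge parallel to the bisector; using the notation of Lemma~\ref{lm:parallelogrammes}, these edges are precisely the initial segment $[xT_1]$ and the segments $H_k = [S_kT_{k+1}]$ for $k\ge 1$. Since each $H_k$ is parallel to the bisector, its two endpoints share the same signed perpendicular distance to the bisector, so $H_k$ lies entirely on one side of the bisector.

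Fix $k\ge 1$ and assume, for concreteness, that $H_k$ lies strictly below the bisector; the mirror case is identical. Then $T_{k+1}$ is strictly below the bisector, and being on the east boundary $[A_{k+1}U_{k+1}]\cup[U_{k+1}B_{k+1}]$ of the rhombus $\mathcal{P}_{k+1}$, it must lie on the lower east side $[U_{k+1}B_{k+1}]$, whose supporting line has slanted axis $\pi/3$. At the Steiner point $T_{k+1}$ three tree edges meet at $2\pi/3$: one is $H_k$ pointing due east (direction $0$ toward $S_k$), and another runs along $[U_{k+1}B_{k+1}]$ in direction $4\pi/3$ toward the terminal $B_{k+1}$. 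The only Steiner tripod of wind-rose directions containing both $0$ and $4\pi/3$ is $\{0,\,2\pi/3,\,4\pi/3\}$, so the third edge at $T_{k+1}$ must point in direction $2\pi/3$ (up and to the west) and therefore crosses the bisector upward, ending at $S_{k+1}$ strictly above the bisector.

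This third edge is parallel to the slanted axis at $-\pi/3$, which also supports the sides $[V_{k+1}B_{k+1}]$ and $[U_{k+1}A_{k+1}]$, so it must exit $\mathcal{P}_{k+1}$ through the only remaining side $[A_{k+1}V_{k+1}]$; hence $S_{k+1}\in[A_{k+1}V_{k+1}]$. The tripod argument at $S_{k+1}$ then forces its three directions to be $\{\pi/3,\,\pi,\,5\pi/3\}$, and the direction $\pi$ yields the next horizontal edge $H_{k+1}$, leaving $S_{k+1}$ to the west and remaining strictly above the bisector. Induction on $k$, with base case the initial edge $[xT_1]$ whose side is determined by the position of $x$ on $[A_0B_0]$, proves the stated alternation. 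The only delicate bookkeeping step is excluding the other possible tripod $\{\pi/3,\pi,5\pi/3\}$ at $T_{k+1}$, which is immediate from the observation that the already-fixed edge $H_k$ uses direction $0$.
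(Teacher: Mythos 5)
Your proof is correct and follows essentially the same route as the paper's: both arguments rest on the fact (from Lemma~\ref{lm:parallelogrammes}) that each $\mathcal{P}_k$ is a rhombus whose vertices $U_k,V_k$ lie on the bisector, so that the slanted edge $[T_{k+1}S_{k+1}]$ must carry the tree from one side of the bisector to the other; your version merely makes explicit the tripod direction-chasing that the paper compresses into two lines. The one point you gloss over, and which the paper does address, is the degenerate possibility that a horizontal edge lies \emph{on} the bisector: your base case tacitly assumes that $x$ is not the midpoint of $[A_0B_0]$, for otherwise $[xT_1]$ has no ``side'', $T_1=U_1$, and the induction cannot start (the same issue would arise at any $k$ with $T_{k+1}=U_{k+1}$, though your inductive step happens to exclude it once the base case is strict). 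The paper dispatches this by noting that fullness (Lemma~\ref{lm:full}) forbids any segment of $\St_0$ from lying on the bisector: an edge ending at $U_k$ would have its two remaining tripod directions pointing along the two sides of the rhombus towards $A_k$ and $B_k$, and attaching the rest of the tree would then force a terminal of degree two. Adding this one observation secures your base case.
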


\begin{proof}
    We repeat the proof of Lemma~\ref{lm:parallelogrammes}(iii). Every $\mathcal{P}_i$ is a rhombus with two vertices at the bisector of the angle.
    By Lemma~\ref{lm:full} no segment of $\St_0$ belongs to the bisector.
    Thus $S_{i+1}$ and $T_{i+1}$ always belong to the opposite sides of the bisector. Since $S_{i}T_{i+1}$ is parallel to the bisector for every $i$, we are done. 
\end{proof}

\begin{lemma} \label{lm:complexlength}
Consider a complex coordinate system, such that $A_\infty = 0$, $A_1 = \cos \alpha  + i\sin \alpha$, $B_1 = \cos \alpha  - i\sin \alpha$. 
Let $x$ be the point where $\overline{\St_0}$ touches $[A_0B_0]$
and suppose that $\lvert A_0x\rvert  \leq \lvert B_0x\rvert$.
Then
    \[
    \H(\St_0) = 
    x + \left( \frac{\sqrt 3}{1 - \lambda} - \frac{i}{1 + \lambda} \right) \sin \alpha
    \]
    where $x$ is the complex number representing the point $x$.
In the case when $\lvert A_0x\rvert  \leq \lvert B_0x\rvert$ we obtain 
the same formula with $x$ replaced by $\bar x$.
\end{lemma}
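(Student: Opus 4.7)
The plan is to apply Maxwell's length identity (Lemma~\ref{lm:Maxwell}) to the full Steiner tree $\St_0$. By Lemma~\ref{lm:full}, $\St_0$ is full, so its only degree-one vertices are $x$, the accumulation point $A_\infty = 0$, and the terminals $A_k, B_k$ for $k \ge 1$. Since $\sum_k(|A_k|+|B_k|) = 2\sum_k\lambda^{k-1}$ converges absolutely, the extended form of Maxwell's identity (the remark after Lemma~\ref{lm:Maxwell}) yields
\[
\H(\St_0) = \bar{c_x}\,x + \bar{c_\infty}\cdot 0 + \sum_{k\ge 1}\bigl(\bar{c_{A_k}} A_k + \bar{c_{B_k}} B_k\bigr).
\]
The contribution from $A_\infty$ vanishes, and the edge incident to $x$ is perpendicular to $[A_0 B_0]$ (as established in the proof of Lemma~\ref{lm:nedofull}) and hence parallel to the bisector, so $c_x = 1$ and this term contributes exactly $x$.

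It remains to identify the outer direction at each $A_k$ and $B_k$. The wind-rose of $\St_0$ consists of the bisector together with two lines making angles $\pm\pi/3$ with it (Lemma~\ref{lm:parallelogrammes}(iv)), so each $c_{A_k}, c_{B_k}$ is one of six unit sixth roots of unity. Combining Lemma~\ref{lm:alternating} (horizontal parts alternate sides of the bisector) with Lemma~\ref{lm:321066} (unique segment crossing between the two convex regions $\mathcal{C}_k$ and $\mathcal{D}_k$) and the parallelogram $\mathcal{P}_k$ surrounding each pair $(A_k, B_k)$, one follows the zigzag from $x$ inward and pins down, for each terminal, which of the two candidate diagonal directions is used. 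In the case $|A_0 x| \le |B_0 x|$, where $x$ lies in the upper half-plane, this yields the $2$-periodic pattern
\[
c_{A_{2j-1}} = e^{i 2\pi/3},\quad c_{B_{2j-1}} = e^{-i\pi/3},\quad c_{A_{2j}} = e^{i\pi/3},\quad c_{B_{2j}} = e^{-i 2\pi/3}.
\]

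Substituting $A_k = \lambda^{k-1} e^{i\alpha}$ and $B_k = \lambda^{k-1}e^{-i\alpha}$ and using the identity $e^{i\theta_1}+e^{i\theta_2} = 2\cos\bigl(\tfrac{\theta_2-\theta_1}{2}\bigr)e^{i(\theta_1+\theta_2)/2}$, the pair of terms at index $2j-1$ contributes $\lambda^{2j-2}(\sqrt 3-i)\sin\alpha$ and the pair at index $2j$ contributes $\lambda^{2j-1}(\sqrt 3+i)\sin\alpha$. Summing two geometric series of common ratio $\lambda^2$ gives
\[
\sum_{k\ge 1}\bigl(\bar{c_{A_k}}A_k + \bar{c_{B_k}}B_k\bigr) = \frac{(\sqrt 3-i)+\lambda(\sqrt 3+i)}{1-\lambda^2}\sin\alpha = \left(\frac{\sqrt 3}{1-\lambda} - \frac{i}{1+\lambda}\right)\sin\alpha,
\]
which combined with the $x$ contribution yields the claimed formula. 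The case $|B_0 x| \le |A_0 x|$ follows by reflection across the bisector, which conjugates each outer direction and replaces $x$ by $\bar x$. The main obstacle is the rigorous identification of the $2$-periodic direction pattern at the terminals; once this combinatorial step is done, the rest reduces to a direct geometric-series computation.
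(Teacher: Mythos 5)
Your proposal is correct and follows essentially the same route as the paper: apply Maxwell's formula (in its absolutely convergent extended form), use fullness of $\St_0$ and Lemma~\ref{lm:alternating} to pin down the alternating $2$-periodic direction pattern $c_{A_{2j-1}}=e^{2\pi i/3}$, $c_{A_{2j}}=e^{\pi i/3}$, $c_{B_{2j-1}}=e^{-\pi i/3}$, $c_{B_{2j}}=e^{-2\pi i/3}$ together with $c_x=1$, and sum the resulting geometric series. Your pairwise sums $\lambda^{2j-2}(\sqrt3-i)\sin\alpha$ and $\lambda^{2j-1}(\sqrt3+i)\sin\alpha$ agree exactly with the paper's $2\lambda^{j-1}\sin\alpha\cdot i\cdot e^{-2\pi i/3}$ and $2\lambda^{j-1}\sin\alpha\cdot i\cdot e^{-\pi i/3}$ terms.
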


\begin{proof}
    We are going to use the length formula~\eqref{eq:maxwell}:
    \begin{equation}
        \label{eq:ckdk}
      \H(\St_0) 
        = \sum_{j=1}^{\infty} \bar{c_j} A_j 
        + \sum_{j=1}^{\infty} \bar{d_j} B_j + 1\cdot x.   
    \end{equation}
    By Lemma~\ref{lm:full} every terminal has degree 1. 
    If we suppose that $\lvert A_0x\rvert  \leq \lvert B_0x\rvert$ 
    the first horizontal edge of $\St_0$ starting from $x$ is going 
    to hit the upper part of the first rhombus $\mathcal{R}_1$.
    This means that the first Steiner point $S_1$ is connected to the upper 
    terminal $A_1$ and has a direction $e^{2\pi i/3}$. 
    The edge connected to the terminal $B_1$ has the opposite direction.
    By Lemma~\ref{lm:alternating} the next horizontal edge is below the angle 
    bisector and the direction of the edges on the terminals $A_2$ and $B_2$
    are rotated by $-\pi/3$. The angles keep alternating between even and 
    odd vertices,
    so, one has the following coefficients $c_k,d_k$ in~\eqref{eq:ckdk}: 
    $c_{2k-1} = e^{2\pi i/3}$, 
    $c_{2k} = e^{\pi i/3}$, 
    $d_{2k-1} = e^{-\pi i/3}$ and 
    $d_{2k} = e^{-2\pi i/3}$, $k \in \mathbb{N}$. 
    The coefficient for $x$ is $1$.
    Recall that $A_j=\lambda^j e^{i\alpha}$, $B_j=\lambda^j e^{-i\alpha}$.
    For odd $j$ one has $\overline{c_j} A_j + \overline{d_j} B_j 
    = 2\lambda^j \sin \alpha \cdot i \cdot e^{-2\pi i/3 }$ while for 
    even $j$ one obtains $2\lambda^j \sin \alpha \cdot i \cdot e^{- \pi i/3 }$. 
    Thus \eqref{eq:ckdk} becomes:
    \begin{align*}
    \H (\St_0) 
    &= x + 2 (1 + \lambda^2 + \lambda^4 \dots ) i e^{-2\pi i/3 } \sin \alpha + 2\lambda (1 + \lambda^2 + \lambda^4 + \dots ) i e^{- \pi i/3 } \sin \alpha\\
    &= 
    x + \left( \frac{2 e^{-2\pi i /3} }{1 - \lambda^2} + \frac{2\lambda e^{-\pi i /3} }{1 - \lambda^2}  \right) i \sin \alpha =
    x + \left( \frac{2 e^{- \pi i /6} }{1 - \lambda^2} + \frac{2\lambda e^{ \pi i /6} }{1 - \lambda^2}  \right) \sin \alpha \\
    &= x + \left( \frac{\sqrt 3}{1 - \lambda} - \frac{i}{1 + \lambda} \right) \sin \alpha.
    \end{align*}
    
\end{proof}

Now we are ready to finish the proof of the theorem. By Lemma~\ref{lm:full} $\St_0$ is full*. 
Lemma~\ref{lm:complexlength} gives a formula for the length, which should be a real number.
Thus
\[
\Im x  = \pm \frac{1}{1 + \lambda} \sin \alpha.
\]
This determines the point $x$ proving item~(i).
Recall that $\Re x = \frac{\cos\alpha}{\lambda} - \frac{\sin\alpha}{\sqrt{3}\lambda}$, so 
\[
\H(\St_0) = \frac{\cos\alpha}{\lambda} - \frac{\sin\alpha}{\sqrt{3}\lambda} + \frac{\sqrt{3}}{1 - \lambda} \sin \alpha
\]
and item~(ii) is also proven.

We are left with the proof of item~(iii).
Clearly $\St_0' = \St_0 \cap \conv f_2(\mathcal A_0)$ connects $f_2(\mathcal A_0)$.
Note that $\St_0'$ is a full tree so we may apply Lemma~\ref{lm:Maxwell} to compute $\H(\St_0')$.
This computation is essentially contained in the computation in the proof of Lemma~\ref{lm:complexlength}; the only difference is that we omit two starting terms in geometric progressions. Thus we obtain
\[
\H \left (\St'_0 \right ) = \lambda^2 \H (\St_0).
\]
It means that $\St_0 \cap f_2([A_0B_0])$ is a Steiner tree for $f_2(\mathcal A_0)$ and Lemma~\ref{lm:alternating} gives 
$f_2(x) = \St_0 \cap f_2([A_0B_0])$.
Also $f_{-1} (\mathcal{P}_1) \cap [A_0B_0] = f_{-1}(V_1)$ so $f_{-1} (\mathcal{P}_1)$ does not intersect $\St_0$. 
Thus $f_2(\St_0)$ does not intersect $\mathcal{P}_2$ and so $f_2(\St_0) \subset \St_0$.

\section{Piecewise contraction dynamics}  \label{sec:dynamics}

Lemma~\ref{lm:parallelogrammes}(i) assures that any full* tree with terminals 
on the set $\mathcal A_1$ is uniquely determined by its wind-rose.
If one knows the position of the line containing the long edge joining the first two parallelograms $\mathcal P_1$ and $\mathcal P_2$ then the whole tree is determined. 
Since the points of $\mathcal A_1$ are placed in geometric progression, each 
subsequent step of the tree construction is subject to the same recurrence equation with the appropriate rescaling. 

It turns out that the resulting discrete dynamical system is conjugate to
the inverse of a 2-interval piecewise contraction, whose 
law can be written as
\[
F_{\lambda,\delta} : x \in [0,1] \to \{ \lambda x + \delta \},
\]
where $\{\cdot\}$ stands for the fractional part.
See \cite{laurent2018rotation}, \cite{pires2019symbolic}, 
\cite{gaivao2022dynamics}.

Suppose $\St$ is a full* tree satisfying the assumptions of Lemmas~\ref{lm:321066} and~\ref{lm:parallelogrammes}. 
We know that the segments of $\St$ outside the parallelograms $\mathcal P_i$ are all parallel to each other. 
Let $e_1$ be the unit vector parallel to these segments 
pointing from $T_2$ towards $S_1$ in Figure~\ref{pict:observations}.
Let $e_2$ and $e_3$ be the two unit vectors completing the wind-rose of $\St$ so that
$e_1$, $e_2$, $e_3$ have zero sum and are counter-clockwise oriented ($e_2$ is pointing from $T_2$ towards $S_2$ and $e_3$ is pointing from $T_2$ towards $B_2$).
It is convenient to use a hexagonal (barycentric) coordinate system centered in $A_\infty$.
A point $p$ is an equivalent class of triples $(u,v,w)$ under the relation $(u,v,w) = (u+t,v+t,w+t)$ which corresponds to the representation $p = u e_1 + v e_2 + w e_3$.
Usually we will choose the representant with $v + w = 0$. 

Using the notation of the proof of Lemma~\ref{lm:parallelogrammes}
let $(l,\delta,-\delta)$ be the hexagonal coordinates of the point $C_1$ which is the center of the parallelogram $\mathcal{P}_1$. 
In general we have $\delta \neq 0$ because the direction $e_1$ is not parallel to the angle bisector.
Let $2a := |A_1V_1| = |U_1B_1|$ and $2b := |A_1U_1| = |V_1B_1|$.
One has:
\begin{align*}
A_1 &= C_1+(0,b,-a) = (l,b+\delta,-a-\delta) 
     = \left (l + \frac{a-b}{2}, \delta + \frac{a+b}{2}, -\delta- \frac{a+b}{2} \right ), 
\\
B_1 &= C_1+(0,-b,a) = (l,\delta-b,a-\delta) 
     = \left(l - \frac{a-b}{2},  \delta - \frac{a+b}{2},  \frac{a+b}{2} - \delta \right),
\\ 
U_1 &= C_1+(0,-b,-a) = (l,\delta-b,-a-\delta)
     = \left (l + \frac{a+b}{2} , \delta + \frac{a-b}{2}, - \delta - \frac{a-b}{2} \right ), 
\\
V_1 &= C_1+(0,b,a) = (l,\delta+b,a-\delta)
    = \left(l - \frac{a+b}{2}, \delta - \frac{a-b}{2},  \frac{a-b}{2} - \delta \right).
\end{align*}

Then define $C_k = \lambda^{k-1}C_1$, $A_k = \lambda^{k-1}A_1$, $B_k = \lambda^{k-1}B_1$, $U_k = \lambda^{k-1}U_1$ and $V_k = \lambda^{k-1}V_1$.

The line segments $[S_{k-1} T_k]$ connecting the parallelograms $\mathcal P_{k-1}$ and $\mathcal P_k$ identified by Lemma~\ref{lm:321066} are parallel to the direction $e_1$ hence have coordinates of the form $(\cdot,\mu_{k-1},-\mu_{k-1})$. 
We are going to find a recurrence relation between $\mu_{k-1}$ and $\mu_k$.
Lemma~\ref{lm:parallelogrammes}(i) gives a rule to obtain $\mu_k$ from $\mu_{k-1}$, or to show that there is no corresponding $\mu_k$. 
Since the prolongation of the segment between the parallelograms $\mathcal P_{k-1}$ and $\mathcal P_k$ must pass between the points 
$A_k$ and $B_k$, one should have 
\[
    \left(\delta -\frac{a+b}{2}\right) \lambda^{k-1}
    \le \mu_{k-1} 
    \le \left(\delta + \frac{a+b}{2} \right)\lambda^{k-1}.
\]
We consider a renormalized version of $\mu_k$ by defining 
\begin{equation}\label{eq:normalization}
    \nu_k := \frac 1 2 + \frac{\lambda^{-k}\mu_k - \delta}{a+b},
    \qquad \mbox{i.e.}\qquad 
    \mu_k = \lambda^{k}\left((a+b)\left(\nu_k-\frac 1 2 \right)+\delta\right)
\end{equation}
so that the previous conditions on $\mu_k$ become $\nu_k \in [0,1]$.

Suppose, without loss of generality, that $b > a$. 
Define
\begin{align*}
    q^\pm &:= \frac 1 2 +\frac {(1-\lambda)\delta -a} {\lambda(a+b)} \pm \frac 1 {2\lambda}\\
    t_1 &:= \lambda (1-q^+),\quad
    t^* := \frac{a}{a+b},\quad
    t_2 := -\lambda q^-.
\end{align*}

\begin{lemma} \label{lm:dynamics}
One has $\nu_{k+1} = g(\nu_k)$ with
\begin{equation}\label{eq:recurrence_g}
    g(t) = \begin{cases}
        \frac{t}{\lambda} + q^+ & \text{if } 0 \le t < t^*,\\
        \frac{t}{\lambda} + q^- & \text{if } t^* < t \le 1.
    \end{cases}
\end{equation}
Moreover $g([0,t_1]\cup [t_2,1]) = [0,1]$, $g(0)=g(1)$, $g$ injective on $\left]0,1\right[\setminus\{t^*\}$. 
If we restrict the values of $g$ to the interval $[0,1]$,
the inverse function,
$g^{-1}\colon [0,1]\setminus\{q^+\} \to[0,1]$ is uniquely defined and one has 
\[
  g^{-1}(t) = \left\{\lambda t + t_2\right\}
\]
where $\left\{x\right\}=x-\lfloor x\rfloor$ is the fractional part of $x$.
\end{lemma}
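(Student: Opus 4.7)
The plan is to exploit the regular tripod structure at the branching points $T_{k+1}$ and $S_{k+1}$ to deduce an explicit affine recurrence for $\mu_k$, and then normalize via \eqref{eq:normalization}. Since the incoming segment $[S_kT_{k+1}]$ is along $+e_1$ and the three unit edges of a regular tripod sum to zero, the other two edges at $T_{k+1}$ must be along $+e_2$ and $+e_3$ (using $e_1+e_2+e_3=0$). This forces $T_{k+1}$ to lie on one of the two sides of $\partial\mathcal{P}_{k+1}$ adjacent to $U_{k+1}$, yielding exactly two geometric configurations: either $T_{k+1}\in[A_{k+1}U_{k+1}]$, with the terminal edge to $A_{k+1}$ along $+e_2$ and $S_{k+1}\in[V_{k+1}B_{k+1}]$; or the symmetric case $T_{k+1}\in[U_{k+1}B_{k+1}]$ with terminal $B_{k+1}$ and $S_{k+1}\in[A_{k+1}V_{k+1}]$. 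The case boundary $T_{k+1}=U_{k+1}$ corresponds precisely to $\nu_k=t^*=a/(a+b)$.

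In the first case I parametrize $T_{k+1}=A_{k+1}-re_2$ with $r\in[0,2b\lambda^k]$; by parallelogram geometry the line $\{T_{k+1}+se_3\}$ meets the opposite parallel side $[V_{k+1}B_{k+1}]$ at $V_{k+1}-re_2$, so $S_{k+1}=V_{k+1}-re_2$. Since the outgoing segment $[S_{k+1}T_{k+2}]$ is along $-e_1$, the $v$-coordinate of $T_{k+2}$ in the $v+w=0$ normalization equals that of $S_{k+1}$. Reading off the $v$-coordinates of $T_{k+1}$ and $S_{k+1}$ directly gives $\mu_{k+1}-\mu_k=-a\lambda^k$; the symmetric argument with $T_{k+1}=B_{k+1}-re_3$ and $S_{k+1}=V_{k+1}-re_3$ yields $\mu_{k+1}-\mu_k=+b\lambda^k$ in the other case. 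Substituting into \eqref{eq:normalization} and simplifying produces exactly $\nu_{k+1}=\nu_k/\lambda+q^-$ when $\nu_k>t^*$ and $\nu_{k+1}=\nu_k/\lambda+q^+$ when $\nu_k<t^*$, which is \eqref{eq:recurrence_g}.

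For the remaining claims the key identity is $q^+-q^-=1/\lambda$. Direct substitution gives $g(0)=q^+=g(1)$, $g(t_1)=1$, and $g(t_2)=0$. Each branch of $g$ is strictly increasing with slope $1/\lambda$, and the open images are contained in $(q^+,\infty)$ and $(-\infty,q^+)$ respectively, hence disjoint, so $g$ is injective on $\left]0,1\right[\setminus\{t^*\}$. The preimages of $[0,1]$ are $[0,t_1]$ on the left branch and $[t_2,1]$ on the right, with images covering $[q^+,1]$ and $[0,q^+]$; hence $g([0,t_1]\cup[t_2,1])=[0,1]$. To invert, solve $t=x/\lambda+q^\pm$ to get $x=\lambda t-\lambda q^\pm$; using $\lambda q^+=1-t_2$ (immediate from $q^+-q^-=1/\lambda$), the two branches read $x=\lambda t+t_2$ when $t<q^+$ and $x=\lambda t+t_2-1$ when $t>q^+$, which unifies to $g^{-1}(t)=\{\lambda t+t_2\}$.

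The argument is essentially a routine computation once the case split is clear; the main obstacle is the coordinate bookkeeping — identifying the directions of the parallelogram edges in the $e_1,e_2,e_3$ basis, maintaining the $v+w=0$ representative consistently across $A_{k+1},U_{k+1},V_{k+1},B_{k+1},T_{k+1},S_{k+1}$, and tracking the $\lambda^k$ scaling when translating between $\mu_k$ and $\nu_k$ so that the constants rearrange to precisely $q^\pm$ as defined.
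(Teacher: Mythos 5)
Your proof is correct and follows essentially the same route as the paper: a two-case split according to which side of $\mathcal{P}_{k+1}$ adjacent to $U_{k+1}$ the incoming $e_1$-segment hits, yielding the affine shifts $\mu_{k+1}-\mu_k=-a\lambda^k$ or $+b\lambda^k$, followed by the normalization \eqref{eq:normalization} and the identity $q^+-q^-=1/\lambda$ for the inverse. The only (inessential) difference is that you derive the shift by explicit hexagonal coordinates of $T_{k+1}$ and $S_{k+1}$ where the paper adds the translation vectors $2a\lambda^k e_3$ or $2b\lambda^k e_2$ directly, and the paper additionally records why the boundary case $T_{k+1}=U_{k+1}$ (i.e.\ $\nu_k=t^*$) cannot occur for a minimizer, via Lemma~\ref{lm:parallelogrammes}(v).
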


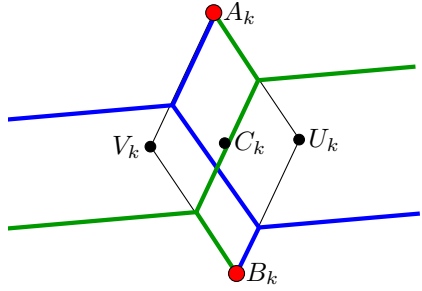
\begin{figure}[h]
    \centering
    \begin{tikzpicture}[scale=20]

\definecolor{green}{rgb}{0.0,0.6,0.0}





 \clip (0.85,-0.01) rectangle (1.15,0.18);

\coordinate (a1) at (0.985,0.173);
\coordinate (b1) at (1,0);
\coordinate (a0) at (1.47,0.260);
\coordinate (b0) at (1.5,0);

\coordinate (y0) at ($(a0)!+0.7222!(b0)$);

\coordinate (c) at ($(a1)!+0.5!(b1)$); 

\path (y0) --++ (185:0.478) coordinate (y1);
\draw (b1) --++ (65:0.098) coordinate (u) -- (a1) --++ (245:0.098) coordinate (v) -- (b1);

\draw[ultra thick, blue] (y1) --++ (125:0.099) coordinate (y2) --++ (65:0.063);
\draw[ultra thick, blue] (y2) --++ (185:0.157);
\draw[ultra thick, blue] (y1) -- (b1);
\draw[ultra thick, blue] (y1) -- ($(y1)!+0.222!(y0)$);

\coordinate (z0) at ($(b0)!+0.65!(a0)$); 
\path (z0) --++ (185:0.4673) coordinate (z1);
\draw[ultra thick, green] (z1) -- (a1);
\draw[ultra thick, green] (z1) --++ (-115:0.0965) coordinate (z2) -- (b1);
\draw[ultra thick, green] (z2) --++ (185:0.157);
\draw[ultra thick, green] (z1) -- ($(z1)!+0.222!(z0)$);




    \draw [fill=red] (a1) circle (0.15pt) node[right]{$A_k$};
    \draw [fill=red] (b1) circle (0.15pt) node[right]{$B_k$};


\draw [fill=] (u) circle (0.1pt) node[right]{$U_k$};
\draw [fill=] (v) circle (0.1pt) node[left]{$V_k$};

\draw [fill=] (c) circle (0.1pt) node[right]{$C_k$};


\end{tikzpicture}
    \caption{Possible behaviour of a locally minimal tree in 
    the parallelogram $\mathcal{P}_k$.}
    \label{pict:dynamics}
\end{figure}

\begin{proof}
We have only two possible behaviours at $\mathcal{P}_k$ (they are drawn in blue and green in Fig.~\ref{pict:dynamics}).
When the input line coming from the right hand side hits the segment $[A_k,U_k]$ (green case)
the line goes down, on the opposite side $[B_k,V_k]$. 
The output line is obtained adding the vector $2a \lambda^{k} e_3$ 
to the input. In hexagonal coordinates:
\[
  (t,\mu_k,-\mu_k) 
  \mapsto (t, \mu_k,2a\lambda^{k} - \mu_k) 
  = (t-a\lambda^{k}, \mu_k - a\lambda^{k}, a \lambda^{k} -\mu_k).
\]
In this case we obtain $\mu_{k+1} = \mu_k - a \lambda^{k}$.
This happens when the line $(t,\mu_k,-\mu_k)$ passes above $U_k=\lambda^{k}U_1$, that is, when $\mu_k > \lambda^{k}\left(\delta + \frac{a-b}{2}\right)$.

Otherwise, if $\mu_k<\lambda^{k}\left(\delta + \frac{a-b}{2}\right)$, 
the input line hits the segment $[B_k,U_k]$ and it goes up, 
towards the opposite side $[A_k,V_k]$.
In this case we add the vector $2 b \lambda^{k} e_2$ obtaining:
\[
  (t, \mu_k,-\mu_k) 
    \mapsto (t, \mu_k+2b \lambda^{k},-\mu_k) 
    = (t-b \lambda^{k}, \mu_k+b \lambda^{k},-\mu_k-b \lambda^{k}).
\]
Hence $\mu_{k+1} = \mu_k + b \lambda^{k}$.

A line hitting the extremal points $A_k$ or $B_k$ will cause the tree to 
have a terminal of degree two (a finite full subtree). 
A line hitting the point $U_k$ would split hitting both terminals 
$A_k$ and $B_k$: by the argument of Lemma~\ref{lm:parallelogrammes}(v) this is not possible in a minimal Steiner tree.

In conclusion, we have $\lambda^{-1-k} \mu_{k+1} = h(\lambda^{-k}\mu_k)$ with
\[
  h(x) = \begin{cases}
      \frac{x+b}{\lambda} &\text{if $-\frac{a+b}{2}<x-\delta <\frac{a-b} 2$}\\\\
      \frac{x-a}{\lambda} &\text{if $\frac{a-b}{2}<x -\delta < \frac{a+b}{2}$}.
  \end{cases}
\]
To find the renormalized recurrence relation 
we put $\nu_k=\psi(\lambda^{-k}\mu_k)$ with
\[
  \psi(s) = \frac 1 2 +\frac{s-\delta}{a+b}, \qquad
  \psi^{-1}(t) = (2t - 1)\frac{a+b}{2}+\delta
\]
whence $\nu_{k+1}=g(\nu_k)$ with $g = \psi \circ h \circ \psi^{-1}$. 
With a straightforward computation one finds:
\[
g(t) = \psi(h(\psi^{-1}(t))) = \frac{t}{\lambda} + \frac 1 2 + \frac{(1-\lambda)\delta-a}{\lambda(a+b)}\pm \frac{1}{2\lambda}
\qquad \text{if $t \lessgtr \frac{a}{a+b}$}.
\]
This gives \eqref{eq:recurrence_g}.

One has $g(t)=\frac{t}{\lambda} + q^\pm$. Hence 
the inverse function has the form $g^{-1}(s) = \lambda(s-q^\pm)$.
For $q^+<s\le 1$ one has $g^{-1}(s)=\lambda(s-q^+)$.
Since $0<\lambda<1$ it is clear that in this case $0<\lambda(s-q^+)<1$ hence $g^{-1}(s)=\{\lambda(s-q^+)\}$ for $q^+<s\le 1$.

If $0\le s < q^+$ one has $g^{-1}(s) = \lambda(s-q^-)$.
But notice that $q^- = q^+-\frac 1 \lambda$ hence 
$\lambda(s-q^-) = \lambda(s-q^+)+1$.
Notice that in this case $-1<\lambda(s-q^^+)<0$
and hence, also in this case, $g^{-1}(s) = \lambda(s-q^+)+1 = \{\lambda(s-q^+)\}$.

\end{proof}

Now consider the dynamical system $([0,1], g)$ which depends only on $\beta$. 
First, recall our proof scheme for $\mathcal{A} = \mathcal{A}_1$ (Theorem~\ref{theo:main2}). 
We restrict the attention on full* candidates, then we show that the length is determined by $J$ 
and the simplest cut of cases gave us that an optimal $J$ is $2\mathbb{N}$. 

Suppose now that one wants to check an existence of a competitor for a given $J$. 
Then $\beta$ can be easily determined as the argument in Lemma~\ref{lm:generalcomplex}.
Lemma~\ref{lm:dynamics} explicitly defines the dynamics and it remains to check if the trajectory started in $\nu_1$ is infinite.
For a full* solution from Theorem~\ref{theo:main2} the corresponding trajectory is 2-periodic with the period $\nu_1,0$ or $\nu_1,1$.
Note that this gives a way to finish the proof of Theorem~\ref{theo:main2} avoiding Lemma~\ref{lm:5points}.

Also, let us note that the trajectory which corresponds to a locally minimal full tree for $\mathcal{A}_1$ is not periodic. 

One can force a full* tree to have a given $\beta$ by addition a segment $[A_\beta B_\beta]$ with a proper slope to $\mathcal{A}_1$.
In the particular case of $\beta = 0$ we add the segment $[A_0B_0]$ to obtain the input $\mathcal{A}_0$; other parameters are $a=b$, $\delta = 0$ and $g^{-1}(t) = \{\lambda t + 1-\lambda/2\}$.
Steiner trees $\St_0^1$ and $\St_0^2$ from Theorem~\ref{theo:main} again correspond to the 2-periodic points of $([0,1],g)$, namely $t = \frac{\lambda}{2(\lambda+1)}$ and $t = \frac{\lambda+2}{2(\lambda+1)}$. However for a general $\beta$ there is no simple analogue of Lemma~\ref{lm:alternating}, which again makes the situation more intricate.

\section{Open questions} \label{sec:open}

In this section, we bring to the reader's attention both long-standing open problems and new questions we found the most interesting. 

\paragraph{Gilbert--Pollak conjecture on the Steiner ratio.} 
A minimum spanning tree is a shortest connection of a given set $A$ of points in the plane by segments with endpoints in $A$.
It is well known that a minimum spanning tree can be found in a polynomial time.

The Steiner ratio is the infimum of the ratio of the total length of Steiner tree to the minimum spanning tree for a finite set of points in the Euclidean plane.
The most known open problem on Steiner trees is Gilbert--Pollak conjecture~\cite{gilbert1968steiner} which states that the planar Steiner ratio is reached on the vertex set of an equilateral triangle (and so is equal to $\frac{\sqrt{3}}{2}$). Despite the fact that some ``proofs'' are published this conjecture remains open, see~\cite{ivanov2012steiner}. 
Graham~\cite{graham2019some} offered 1000\$ for a positive resolution.

\paragraph{Analogues results in space.} Steiner trees in a Euclidean $d$-dimensional space have the same local structure as in the plane, id est every branching point has degree 3 and moreover three adjacent segments have pairwise angles $2\pi/3$ and are coplanar.
However, a global structure is much more complicated. In particular a full (and full*) tree may have no wind-rose.
Up to our knowledge all known series of explicit solutions are planar.

In particular, we are interested in a solution to the Steiner problem for an input which is a mix of a regular $n$-gon with the setup of this paper. 
Consider a regular $3$-dimensional $n$-gon in the plane $x = 1$ centered at $(1,0,0)$; let $\mathcal{Q}$ be the set of its vertices.
Then the input is the union of all $f^k(\mathcal{Q})$, $k \in \mathbb{N}$, where $f$ is a homothety with the center in the origin and a small enough scale factor $\lambda$.

Some of our tools (in particular, Lemma~\ref{lm:473988}) can be translated to the $d$-dimensional setup.

\paragraph{The number of solutions.} 
It is known~\cite{gilbert1968steiner} that the total number of full Steiner topologies (the underlying abstract graphs) with $n$ terminals is 
\[
\frac{(2n-4)!}{2^{n-2}(n-2)!}.
\]
How many of them may simultaneously give a shortest solution to the Steiner problem? 
In all known examples (for instance, the set of ladders or the set of trees from Corollary~\ref{cor:finite}) the number is at most exponential in $n$ while the number of topologies has a factorial order of growth. Note that the answer may heavily depend on the dimension.

On the other hand, for $n \geq 4$ the set of planar $n$-point configurations with multiple solution to Steiner problem has Hausdorff dimension $2n - 1$ (as a subset of $(\mathbb{R}^2)^n$), see~\cite{basok2018uniqueness}. 
It is interesting to find the corresponding dimension in a $d$-dimensional Euclidean space.

\paragraph{Density questions.}
The following questions make sense in both two-dimensional Euclidean space and in a higher dimension.
Some of them seem not very difficult and can be considered as exercises. 
Let $\St$ be a Steiner tree and suppose that $B_r(x)$ (or another good body) contains no terminal point of $\St$. 
What is the largest possible number of branching points of $\St$ in $B_r(x)$? What is the maximum of $\H(\St \cap B_r(x))/r$? 

Notice that in this paper, we provided an example of a full Steiner tree with terminals on the boundary of a triangle and with infinitely many Steiner points in the interior (see Remark~\ref{rem:main}).

Let $\mathcal{A}$ be a finite subset of $\partial B_1(0)$ (or another good set). What is the largest possible $\sharp(\St \cap \partial B_{1/2}(0))$? 
What is the largest possible number of branching points of $\St$ lying within $B_{1/2}(0)$?
What is the upper bound for $\H(\St \cap B_{1/2}(0))$?
Similar questions can be asked for any radius in place of $\frac 1 2$.

\section{Statements \& Declarations}

\paragraph{Acknowledgments.} We thank Pavel Prozorov for finding some gaps in the initial proof. 
We are grateful to Alexey Gordeev for some computer simulations and to Carlo Carminati for bringing the 2-interval piecewise contraction to our attention.

\paragraph{Funding.} Yana Teplitskaya is supported by the Simons Foundation grant 601941, GD.
Emanuele Paolini was supported by the projects: PRIN 2022PJ9EFL and PRA GeoDom.

\paragraph{Competing Interests.} The authors have no relevant financial or non-financial interests to disclose.

\paragraph{Data Availability.} The manuscript has no associated data.

\section*{Appendix: proofs of auxiliary and well-known results}

\begin{proof}[Proof of Lemma~\ref{lm:isolated}]
    If $\mathcal A = \{x_0\}$ the result is trivial. 
    Without loss of generality, we can hence suppose 
    that $\mathcal A$ contains at least two points.
    
    \emph{Claim 1: there exists $\varepsilon>0$ such that every connected component 
    of $\St\setminus \mathcal A$ which intersect $\overline{B_\varepsilon(x_0)}$ contains $x_0$.}
    Let $\rho>0$ be such that $\overline{B_{2\rho}(x_0)} \cap \mathcal A = \{x_0\}$.
    If a connected component $S$ of $\St \setminus \mathcal A$ touches $\overline{B_\rho(x_0)}$
    but does not contain $x_0$, then it must touch both $\partial B_\rho(x_0)$ and 
    $\partial B_{2\rho}(x_0)$. 
    Being $S$ connected we would have $\H(S)\ge \rho$. 
    Since $\H(\St)<+\infty$,
    we conclude that only a finite number of connected components of $\St\setminus \mathcal A$
    can possibly intersect $\overline{B_\rho(x_0)}$ without containing $x_0$. 
    Take $\varepsilon$ smaller than the minimum distance of $x_0$ from each of these components to 
    get the claim.

    \emph{Claim 2: let $S_0$ be the connected component of $\overline{\St}$ which contains $x_0$. 
    Then $\overline{\St \cap B_\varepsilon(x_0)} =  S_0 \cap \overline{B_\varepsilon(x_0)}$}.
    Suppose by contradiction that there exists $x\in \overline{\St \cap B_\varepsilon(x_0)} \setminus S_0$.
    Let $S$ be the connected component of $\overline{\St}$ which contains $x$
    and let $S'$ be the connected component of $\St \setminus \mathcal A$ which contains $x$.
    Clearly $S'\subset S$ and $x\in S' \cap \overline{B_\varepsilon(x_0)}$.
    This is in contradiction with Claim 1.

    \emph{Claim 3: $S_0\cap \partial B_t(x_0)\neq \emptyset$ for all $t<\varepsilon$.}
    Suppose by contradiction that $\partial B_t(x_0)\cap S_0 = \emptyset$ for some $t<\varepsilon$. 
    Then $S_0 \subset B_t(x_0)$ because $x_0\in S_0$ and $S_0$ is connected.
    This means that $S_0$ is also a connected component of $\St\cup \mathcal A$
    and this is not possible because $\St\cup \mathcal A$ is connected by assumption and 
    $\mathcal A$ is assumed to have some other point outside $B_\varepsilon(x_0)$.

    By Theorem~\ref{th:PaoSte}, there exists $\rho < \varepsilon$ such that 
    the set $\Sigma_\rho = \overline{\St \setminus B_\rho(\mathcal A)}$ 
    is an embedding of a finite graph.
    Clearly $\Sigma_\rho \cap S_0\neq \emptyset$ because $S_0\subset \overline{\St}$ 
    touches $\partial B_\rho(x_0)$, $\Sigma_\rho \supset \St \cap \partial B_\rho(x_0)$,
    $\overline{\St}  \setminus \St \subset \mathcal A$ (Theorem~\ref{th:PaoSte})
    and $\partial B_\rho\cap \mathcal A = \emptyset$.

    Hence $\Sigma = \Sigma_0\cup S_0$ is connected and contains $x_0$.
    Also $\Sigma \cap \partial B_\rho(x_0) = \Sigma_0\cap \partial B_\rho(x_0)$
    is finite (by Theorem~\ref{th:PaoSte}).
    The set $S=\Sigma\cap \overline{B_\rho}$ is compact and 
    has a finite number of connected components because each connected component has 
    touches $\Sigma \cap \partial B_\rho(x_0)$, which is finite.
    Let $C$ be a connected component of $S$.
    We claim that $C$ is a Steiner set for $C\cap \partial B_\rho$,
    otherwise a Steiner set $C'$ for $C\cap \partial B_\rho$ could be used to construct 
    a competitor $\St' = \St \setminus C \cup C'$ and obtain 
    the contradiction $\H(\St')<\H(\St)$.
    Since $C \cap \partial B_\rho \subset \Sigma \cap \partial B_\rho$ is finite, 
    the set $S$ is a finite embedded tree (Theorem~\ref{th:PaoSte}) and hence 
    $S$ is an embedding of a finite graph.
\end{proof}

\begin{proof}[Proof of Corollary~\ref{cor:isolated}]
For almost every $\rho>0$ Theorem~\ref{th:PaoSte} assures that
$\St \setminus B_\rho(\mathcal A)$ is an embedding of a finite graph.
The set $\mathcal A\setminus B_\rho(\mathcal A')$ is finite because 
it contains only isolated points without any accumulation point.
For each $x_0$ in such finite set we can apply the previous lemma
to find a neighborhood of $x_0$ where $\St$ is an embedding of a finite graph.
If $\varepsilon$ smaller than the minimum of the radii of these neighborhoods, 
we can apply, again, Theorem~\ref{th:PaoSte} to find that also $\St\setminus B_\varepsilon(\mathcal A)$ 
is an embedding of a finite graph and hence deduce 
that $\St \setminus B_\rho(\mathcal A')$ is an embedding of a finite graph.
\end{proof}

\begin{proof}[Proof of Lemma~\ref{lm:Maxwell}]
Let $v_1,\ldots,v_{n+t} \in \mathbb{C}$ be the vertices of $\St$ in the complex plane (they include the terminal points $v_k=p_k$ for $k=1,\dots, n$ and $t \leq n-2$ branching points) and let $E$ be the set of edges 
of $\St$.
Let $u(z) := \frac{z}{\lvert z\rvert} = e^{i\arg z}$ be the unit number representing the direction of the complex number $z$
so that $\lvert z\rvert = z\cdot \overline{u(z)}$.
One has
\begin{align*}
\H (\St) 
&= \sum_{(k,j) \in E} |v_k - v_j| 
= \sum_{(k,j) \in E} (v_k - v_j) \cdot \overline{u(v_k - v_j)}
= \sum_{(k,j)\in E} \left ( v_k \cdot \overline{u(v_k-v_j)} + v_j \cdot \overline{u(v_j-v_k)} \right )\\
& = \sum_{k=1}^{n+t} v_k \cdot \sum_{(k,j)\in E} \overline{u(v_k-v_j)}.
\end{align*}
If $v_k$ is a branching point then the corresponding sum of directions is zero, so we have only a sum over the terminal points.
\end{proof}

\begin{proof}[Proof of Lemma~\ref{lm:Melzak}]
    Clearly, $\angle p_1pp_2 = \pi/3$, $\angle p_1qp_2 = 2\pi/3$ and the line $(p_1,p_2)$ separates $p$ and $q$. Hence $p,p_1,p_2$ and $q$ are cocircular. 
    Then $\angle p_1qp = \angle p_1p_2p = \pi/3$, so $[pq]$ is a continuation of the remaining edge adjacent to $q$ in $\St$.
By the Ptolemy's theorem 
\[
|pq| \cdot |p_1p_2| = |p_1q| \cdot |pp_2| + |p_2q| \cdot |pp_1|,
\]
so $|pq| = |p_1q| + |p_2q|$ and $\H(\St') = \H(\St)$.    
\end{proof}

\begin{proof}[Proof of Theorem~\ref{th:uniq}]
We know that the function $x\mapsto \lvert x \rvert$ is a convex function defined 
on $\mathbb R^d$. 
Hence it turns out that the functional $\ell$ is convex on the vector space $(\mathbb R^d)^V$
of all immersions $\varphi\colon V\to \mathbb R^d$.
So, the real function $F(t) = \ell(t\varphi + (1-t)\psi)$ is convex for $t \in [0,1]$ and if $\varphi$
is a local minimum for $\ell$ it turns out that $t=0$ is a local minimum for $F$. Also, if $v \in V_0$, then $\varphi(v) = \psi(v) = (t\varphi + (1-t)\psi)\ (v)$, so $\ell$ is defined at $t\varphi + (1-t)\psi$. This easily implies that $F$ is constant and hence 
\begin{equation} \label{eq:convexity}
    t\ell(\varphi) + (1-t)\ell(\psi)  = \ell(t\varphi + (1-t) \psi)    
\end{equation}
for every $t \in [0,1]$. Thus $\ell (\varphi) = \ell (\psi)$.

But since the inequality~\eqref{eq:convexity} is true for subgraph of $G$, in particular for every summand in the formula, hence 
equality in\eqref{eq:convexity} holds in each such summand, hence for all $v$ and $w$ which are joined by an
edge of $E$ one has
\[
    t \lvert \varphi(v)-\varphi(w)\rvert + (1-t) \lvert  \psi(v)-\psi(w)\rvert 
    = \lvert t (\varphi(v)-\varphi(w)) + (1-t) (\psi(v)-\psi(w))\rvert
    \qquad \forall t\in[0,1].
\]
But we notice that the equality $t\lvert x\rvert + (1-t) \lvert x\rvert = \lvert tx+(1-t)y\rvert$
holds for all $t \in [0,1]$ only when 
$x$ is a positive multiple of $y$ or when $x=0$ or $y=0$ since the function $\lvert x\rvert$ 
is linear only on the half-lines through the origin.

Letting $x=\varphi(v)-\varphi(w)$ and $y=\psi(v)-\psi(w)$ we obtain the claim of the theorem.
\end{proof}

\begin{proof}[Proof of~\eqref{eq:appendix}]
We claimed that
\[
\frac{\cos\alpha}{\lambda} - \frac{\sin\alpha}{\sqrt{3}\lambda} - \cos \alpha \geq  \frac{\sqrt{3}}{1 - \lambda} \sin \alpha.
\]
This is equivalent to
\[
\cos \alpha \left(\frac{1}{\lambda} - 1 \right) > \sin \alpha \left(\frac{1}{\sqrt{3}\lambda} + \frac{\sqrt{3}}{1-\lambda}\right)
\]
and
\[
\cot \alpha > \frac{1+2\lambda}{\sqrt{3}(1-\lambda)^2}.
\]
One has
\[
\frac{d}{d\lambda} \frac{1+2\lambda}{\sqrt{3}(1-\lambda)^2} = \frac{2(2+\lambda)}{\sqrt{3}(1-\lambda)^3}
\]
which is positive for $0 < \lambda \leq 1/2$.
Cotangent is decreasing, so in view of~\eqref{eq:438466} it is enough to examine
\[
\lambda = \left(\frac{\cos \left(\frac{\pi}{3} + \alpha \right)}{\cos \left(\frac{\pi}{3} - \alpha \right)}\right)^2 = 
\left( \frac{\cot \alpha - \sqrt{3}}{\cot \alpha + \sqrt{3}} \right)^2.
\]
Put $t = \cot\alpha$, then the substitution gives that it is enough to show that
\[
\frac{-\sqrt{3} t^4 + 44t^3 - 2\sqrt{3}t^2 - 12 t - 9\sqrt{3}}{48t^2} > 0
\]
for $t \in [\sqrt{3}, 3\sqrt{3} + 2\sqrt{6}]$, were the ends correspond to $\alpha = \pi/6$ and $\lambda = 1/2$, respectively.
One can check that $-\sqrt{3} t^4 + 44t^3 - 2\sqrt{3}t^2 - 12 t - 9\sqrt{3}$ is increasing on this interval and so the minimal value is $96\sqrt{3}$ at $t = \sqrt{3}$.
    
\end{proof}

\bibliographystyle{plain}
\bibliography{main}

\end{document}